\numberwithin{equation}{section}
\numberwithin{figure}{section}
\let\oldtocsection=\tocsection
\let\oldtocsubsection=\tocsubsection
\let\oldtocsubsubsection=\tocsubsubsection
\renewcommand{\tocsection}[2]{\hspace{0em}\oldtocsection{#1}{#2}}
\renewcommand{\tocsubsection}[2]{\hspace{2em}\oldtocsubsection{#1}{#2}}
\renewcommand{\tocsubsubsection}[2]{\hspace{4em}\oldtocsubsubsection{#1}{#2}}
\def\comment#1{}
\newcommand{\R}{\mathbb{R}}
\newcommand{\N}{\mathbb{N}}
\newcommand{\Z}{\mathbb{Z}}
\newcommand{\T}{\mathbb{S}^1}
\newcommand{\Diff}{\mathsf{Diff}}
\newcommand{\Homeo}{\mathsf{Homeo}}
\newcommand{\eps}{\varepsilon}
\newcommand{\PSL}{\mathsf{PSL}}
\newcommand{\SL}{\mathsf{SL}}
\newcommand{\St}{\mathsf{St}} %Star
\newcommand{\cI}{\mathcal{I}}
\newcommand{\cJ}{\mathcal{J}}
\newcommand{\cG}{\mathcal{G}}
\newcommand{\wO}{\widetilde{O}}
\newcommand{\wTh}{\widetilde{\Theta}}
\DeclareMathOperator{\rel}{\mathsf{rel}}
\DeclareMathOperator{\Isom}{\mathsf{Isom}}
\DeclareMathOperator{\rot}{\mathsf{rot}}
\newtheorem{thm}{Theorem}[section]
\newtheorem{thmA}{Theorem}
\newtheorem*{thm*}{Theorem}
\newtheorem{lem}[thm]{Lemma}
\newtheorem{prop}[thm]{Proposition}
\newtheorem{cor}[thm]{Corollary}
\newtheorem*{cor*}{Corollary}
\newtheorem*{claim}{Claim}
\newtheorem{claim1}{Claim}
\theoremstyle{definition}
\newtheorem{dfn}[thm]{Definition}
\newtheorem*{conv}{Convention}
\theoremstyle{remark}
\newtheorem{rem}[thm]{Remark}
\newtheorem{ex}[thm]{Example}
\title[Ping-pong partitions and locally discrete groups of $\Diff_+^\omega(\T)$, I]{Ping-pong partitions and locally discrete groups of real-analytic circle diffeomorphisms, I: Construction}
\date{}
\author[J. Alonso, S. Alvarez, D. Malicet, C. Meni\~no Cot\'on, M. Triestino]{
Juan Alonso \and S\'ebastien Alvarez    \and
  Dominique Malicet \and Carlos Meni\~{n}o \and Michele Triestino
}
\begin{document}

\begin{abstract}
Following the recent advances in the study of groups of circle diffeomorphisms, we describe an efficient way of classifying the topological dynamics of locally discrete, 
finitely generated, virtually free subgroups of the group $\Diff^\omega_+(\T)$ of orientation preserving real-analytic circle diffeomorphisms, which include all subgroups of $\Diff^\omega_+(\T)$ acting with an invariant Cantor set.
An important tool that we develop, of independent interest, is the extension of classical ping-pong lemma to actions of fundamental groups of graphs of groups.
Our main motivation is  an old conjecture by P.\ R. Dippolito [Ann.~Math.~(2) 107 (1978), 403--453] from foliation theory, which we solve in this restricted but significant setting: this and other consequences of the classification will be treated in more detail in a companion work (by a slightly different list of authors).

\smallskip
{\noindent\footnotesize \textbf{MSC\textup{2010}:} Primary 37C85, 20E08. Secondary 20E06, 57S05.}\\
{\noindent\footnotesize \textbf{Keywords:} ping-pong, groups acting on the circle, Bass--Serre theory, virtually free groups.}
\end{abstract}

\maketitle

%{\small
%\tableofcontents
%}

\section{Introduction}

In this work we study virtually free groups of real-analytic circle diffeomorphisms. Recall that a group is \emph{virtually free} if it contains a free subgroup of finite index. All the actions on the circle that we consider preserve the orientation, and we denote by $\Diff_+^\omega(\T)$ the group of order-preserving, real-analytic circle diffeomorphisms.
We are more specifically interested in those virtually free groups that are \emph{($C^1$) locally discrete} in $\Diff_+^\omega(\T)$: this is a condition that is stronger than simple discreteness, as it is not only required that the identity be isolated in $G$, but we want it to be isolated in restriction to every interval where there is the interesting dynamics of the group. Namely, let $\Lambda\subset \T$ be a minimal invariant compact set for $G$ (this is either the whole circle, or a Cantor set, or a finite orbit); $G$ is locally discrete if for every interval $I$ intersecting $\Lambda$, the restriction of the identity to $I$ is isolated among the set of restrictions in the group
\[
G\restriction_I=\{g\restriction_I\mid g\in G\}\subset C^1(I;\T),
\]
with respect to the $C^1$ topology. The case of minimal invariant Cantor set (called also \emph{exceptional}) is of special interest in this paper. By work of Rebelo \cite{Rebelo}, every finitely generated subgroup $G\subset\Diff^\omega_+(\T)$ with a minimal invariant Cantor set is locally discrete. Moreover, by a theorem of Ghys \cite{euler} such a group must be virtually free.

Note that when the action of $G$ has finite orbits, the minimal invariant compact set may not be unique. In fact, by the so-called Hector's lemma \cite{euler,proches,Navas2006,football}, in this case the locally discrete group $G\subset \Diff_+^\omega(\T)$ is \emph{virtually cyclic}, therefore \textit{a fortiori} the definition does not depend on the choice of $\Lambda$. Reciprocally, by the Denjoy--Koksma inequality \cite{herman,NT}, locally discrete, virtually cyclic groups must have finite orbits.
This has an important consequence: locally discrete, virtually cyclic groups are \emph{topologically classified} by the periodic orbits and their symmetries, and more precisely by their number and their dynamical behaviour (attracting/repelling).

As Bonatti and Langevin nicely explain in \cite{BL}, a classification problem usually divides into three parts.
\begin{enumerate}
\item First, one has to solve the problem of \emph{coding}. That is, given an action, one has to find a recipe to describe the action by a finite amount of data, in such a way that if for two actions one has the same data, then the actions are (semi-)conjugate.
\item Second, one has to settle the problem of \emph{realization}. That is, one has to detect which arrays of data actually come from a coding of an action.
\item Third, there is the problem of \emph{recognition}. That is, a given action may be encoded by two different arrays of data and one wants to be able to determine (algorithmically) when this happens.
\end{enumerate}

In the example above of virtually cyclic groups, the three facets of the topological classification problem are solved. As another major example, the topological classification of discrete subgroups of $\PSL(2,\R)$ coincides (see Goldman \cite{Goldman}) with the classification of hyperbolic surfaces (possibly with boundary, cusps and conic points).

Our main result is the \emph{first step for the topological classification} of general locally discrete, virtually free groups in $\Diff_+^\omega(\T)$. As for the simple case of virtually cyclic groups, we shall describe the conjugacy class with simple combinatorial and dynamical data (like the ``periodic orbits'' above): these will be encoded in a partition, that we call \emph{ping-pong partition}, associated with actions on trees and good generating sets. These are the generalization of more classical ping-pong partitions to groups acting on trees, first developed by Fenchel and Nielsen \cite{Fenchel-Nielsen} and later used successfully by Maskit in his combinations theorems \cite{Maskit}. They can also be thought as relatives of classical Markov partitions in dynamical systems; we discuss the relation between these two notions in the next section.

As for free groups it is natural to define a ping-pong partition associated with a given generating system, a good analogue for virtually free groups are \emph{orientation-preserving, cocompact, proper actions on locally finite trees}. In this precise setting, \emph{proper} means that the action has finite stabilizers. In the following we say that a \emph{marking} for a finitely generated, virtually free group $G$ is a proper action $\alpha:G\to\Isom_+(X)$ of $G$ on a locally finite tree, together with a connected fundamental domain $T\subset X$ for the action $\alpha$.

\begin{conv}
In the following, by \emph{virtually free group} we will always mean a finitely generated, virtually free group which is not virtually cyclic (unless explicitly stated).
\end{conv}

\begin{thmA}\label{t:main}
Let $G\subset \Diff^\omega_+(\T)$ be a locally discrete, virtually free group of real-analytic circle diffeomorphisms. For any marking $(\alpha:G\to \Isom_+(X),T)$, there  exists a proper  ping-pong partition for the action of $G$ on $\T$ (in the sense of Definition~\ref{d:markov-partition}).
\end{thmA}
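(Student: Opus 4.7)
The plan is to exploit the Bass-Serre dictionary provided by the marking: the proper cocompact action $\alpha$ realizes $G$ as the fundamental group of the finite graph of groups $T/G$, with finite vertex stabilizers $G_v=\stab{G}{v}$ and finite edge stabilizers $G_e=\stab{G}{e}$. My goal is to construct, for each oriented edge $e$ of $T$, an open subset $U_e\subset\T$ representing the ``half-space'' of $X$ on the far side of $e$, such that the family $\{U_e\}$ together with the fixed point data of vertex stabilizers forms a partition enjoying the Markov-type conditions of Definition~\ref{d:markov-partition}. The key ping-pong inclusions take the form $g(\T\setminus U_{\bar e})\subset U_e$ whenever $g\in G$ corresponds to crossing the edge $e$ in the Bass-Serre normal form, and at every vertex $v$ the local sets $\{U_e : e\text{ starts at }v\}$ must be cyclically permuted by $G_v$ in a way compatible with the cyclic order on the link of $v$ in $T$.

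For the local model at vertices, I would start from the classical fact that every finite subgroup of $\Diff^\omega_+(\T)$ is topologically conjugate to a group of rotations (via H\"older's theorem and the topological freeness on $\Lambda$). This provides, at each vertex $v\in T$, a $G_v$-invariant system of fundamental intervals, cyclically permuted, whose boundary points lie in $\Lambda$. These intervals serve as the cells of the partition sitting at $v$ and determine the candidate intervals $U_e$ for each edge $e$ emanating from $v$.

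For the global assembly along $T$, I would use the \emph{expansion} properties that accompany local discreteness. In the minimal case one may rely on Denjoy--Koksma/Hector type arguments (since a locally discrete virtually free group cannot be virtually cyclic, some element must have a hyperbolic fixed point), while in the invariant Cantor case Rebelo's theorem yields expanding elements near every point of $\Lambda$. This produces, for each edge $e$ of $T$ carried by a hyperbolic element of $G$, nested attracting and repelling intervals obtained as images of small fundamental neighborhoods. Walking along $T$ from a root vertex and inductively refining the candidate $U_e$ by intersecting with all contracting images coming from elements whose normal form begins with a crossing of $e$, one obtains the sought intervals; the finite index free subgroup supplied by virtual freeness ensures the induction terminates combinatorially in each bounded region of $T$.

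The main obstacle, in my view, is the simultaneous compatibility of all these local choices: the intervals $U_e$ must satisfy the ping-pong inclusions for \emph{every} group element whose normal form begins by crossing $e$, not merely for a generating set, and at each vertex the local combinatorics must exactly match the cyclic order on the link of $v$. I expect the right technical step to be a nested-sequence argument that iteratively shrinks each candidate interval to the intersection of all contracting images required of it, using local discreteness to prevent collapse to points of $\Lambda$ and the finite-rotation model to guarantee $G_v$-equivariance at each stage. Extra care will be needed in the exceptional minimal case, where the sets $U_e$ must be understood through their trace on the Cantor set $\Lambda$ and the complementary gaps (the wandering intervals of the action) must be distributed consistently across the partition cells so that properness in the sense of Definition~\ref{d:markov-partition} is preserved.
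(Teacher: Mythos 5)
Your overall strategy---use the marking to realize $G$ as a fundamental group of a graph of groups, attach to each edge/component of the tree a subset of the circle, and verify the ping-pong inclusions---is the same skeleton as the paper's argument. But the proposal has a genuine gap at the point where all the analytic difficulty lives: you never establish that your candidate sets $U_e$ are \emph{nonempty finite unions of open intervals} whose union covers the minimal set $\Lambda$ up to finitely many points. Your proposed mechanism (``iteratively shrinking each candidate interval to the intersection of all contracting images required of it'') intersects over the infinitely many group elements whose normal form begins by crossing $e$, and nothing in the proposal prevents this intersection from collapsing to a point of $\Lambda$, becoming empty, or acquiring infinitely many connected components. For free groups this is precisely the content of Deroin--Kleptsyn--Navas's theorem (properties \ref{i:finite_free}.\ and \ref{i:cover}.\ of Theorem~\ref{t:DKNfree}), whose proof occupies most of \cite{DKN2014} and rests on property $(\star)$ and control of affine distortion---not on the soft Denjoy--Koksma/Sacksteder-type facts you invoke. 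The paper does not redo this analysis for virtually free groups; instead it \emph{defines} $U_c$ directly by a uniform-contraction condition (the sets $U_E$ of Section~\ref{s:topskel_gen}), so that the ping-pong inclusions become formal consequences of the arboreal inclusions $W_c g^{-1}\subset W_d$ (Lemmas~\ref{l:top-alg} and \ref{l:inclusionU}), and then proves that for a finite-index free subgroup $H$ one has $U_c^G\cap\Lambda=\bigsqcup_{\iota(d)=c}U_d^H\cap\Lambda$ up to finitely many points (Lemma~\ref{l:quasi-isomU}, Proposition~\ref{p:freesubgroup}), which imports the finiteness and covering statements from the free case. Your proposal mentions the finite-index free subgroup only to ``ensure the induction terminates combinatorially,'' which is not where it is needed.

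Two smaller but real issues. First, your ping-pong inclusion $g(\T\setminus U_{\bar e})\subset U_e$ is too strong: the sets only cover $\Lambda$ up to finitely many points and have gaps in $\T\setminus\Lambda$, so the correct statement is of the form $g(U_c)\subset U_d$ for $g\in W_{c\to d}$, as in Theorem~\ref{t:DKNmarkov3}. Second, you do not give any mechanism for \emph{properness} (condition (IF~\ref{pp10}), the missed point); in the paper this does not come for free from the construction and requires refining the partition finitely many times using density of the orbit of the endpoints in $\Lambda$ (Proposition~\ref{p:DKN_is_proper}). The local rotation model at vertices via H\"older's theorem is not needed for the construction and does not substitute for either of these steps.
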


As there are several definitions appearing in the literature of semi-conjugacy of actions on the circle (see \cite{GhysBounded,BFH,RigFlex,CD}), let us fix the following:

\begin{dfn}
Let $\rho,\rho':G\to \Homeo_+(\T)$ be two representations. They are \emph{semi-conjugate} if the following holds: there exist
\begin{itemize}
\item a monotone non-decreasing map $h:\R\to \R$ commuting with the integer translations and
\item two corresponding central lifts $\widehat{\rho},\widehat \rho':\widehat{G}\to \Homeo_{\Z}(\R)$  to homeomorphisms of the real line commuting with integer translations,
\end{itemize}
such that
\[
h\,\widehat{\rho}(\widehat{g})=\widehat{\rho}'(\widehat{g})\,h,\quad\text{for any }\widehat{g}\in \widehat{G}.
\]
\end{dfn}

\begin{thmA}\label{t:conj}
Let $\rho,\rho':(G,\alpha,T)\to \Homeo_+(\T)$ be two  representations of a virtually free group with a marked action $\alpha$ on a tree. Suppose that the actions on $\T$ have equivalent proper ping-pong partitions (in the sense of Definition~\ref{d:equivalence_PPP}). Then the actions are semi-conjugate.
\end{thmA}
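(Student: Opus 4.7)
The plan is to extract, from the combinatorial data of a proper ping-pong partition, a coding of points in the minimal invariant set $\Lambda_\nu\subset\T$ of $\rho_\nu(G)$ by the ends of the tree $X$, and then to match the two codings using the equivalence of partitions. Concretely, fix a base vertex $v_0$ inside the fundamental domain $T$. For each edge of $X$ emanating from $v_0$, the ping-pong axioms supply a piece of the partition which is ``attracting'' for the corresponding generator, in the sense that its image under that generator contains the pieces assigned to all other edges out of $v_0$. Iterating this procedure along a reduced infinite ray $\xi=e_1e_2\cdots$ in $X$ produces a nested sequence of arcs $J^\nu_1(\xi)\supset J^\nu_2(\xi)\supset\cdots$ in $\T$; the properness (shrinking) condition forces $\bigcap_n J^\nu_n(\xi)$ to reduce to a single point $\Phi_\nu(\xi)\in\Lambda_\nu$. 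This defines $G$-equivariant maps $\Phi_\nu\colon\partial X\to\Lambda_\nu$ for $\nu=1,2$.

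The circular order pulled back to $\partial X$ from $\T$ via $\Phi_\nu$ depends only on the cyclic arrangement of the pieces of the ping-pong partition and on the permutations induced on these pieces by the generators; this is precisely the data encoded by the equivalence of partitions in the sense of Definition~\ref{d:equivalence_PPP}. Hence $\Phi_1$ and $\Phi_2$ pull back the same circular pre-order on $\partial X$, and there is a canonical $G$-equivariant, order-preserving bijection $h_0$ between the images $\Phi_1(\partial X)$ and $\Phi_2(\partial X)$ satisfying $h_0\circ\Phi_1=\Phi_2$. By continuity (monotonicity plus density) this extends to a $G$-equivariant order-preserving bijection between the closures $\overline{\Phi_1(\partial X)}$ and $\overline{\Phi_2(\partial X)}$, which in turn are the minimal sets $\Lambda_1$ and $\Lambda_2$.

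To obtain a semi-conjugacy defined on all of $\T$, one extends $h_0$ across the wandering arcs that make up the complements $\T\setminus\Lambda_\nu$. Equivariance of $h_0$ on endpoints pairs each $G$-orbit of wandering arcs of $\rho_1$ with a unique $G$-orbit of wandering arcs of $\rho_2$ (either a genuine wandering arc or, if the two endpoints have collapsed, a single point). Picking one representative per orbit and extending $h_0$ by an arbitrary monotone surjection onto the target arc (or by the constant map when the target has collapsed), and then propagating by $G$-equivariance, produces a well-defined monotone degree-one map $h\colon\T\to\T$ intertwining $\rho_1$ and $\rho_2$. Lifting $h$ to a non-decreasing $\Z$-equivariant map $\R\to\R$ and selecting compatible central lifts $\widehat\rho_\nu\colon\widehat G\to\Homeo_{\Z}(\R)$ then yields the semi-conjugacy in the exact sense of the definition.

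The chief obstacle is not the extension step, which is essentially formal once the codings are in place, but rather the verification that the fibres of $\Phi_\nu$ (which are non-trivial precisely when $\Lambda_\nu$ is an exceptional minimal set and two distinct ends of $X$ code the two endpoints of a common gap) are determined by the combinatorial data of the partition alone. Showing that this fibre structure coincides for $\nu=1,2$ is what guarantees that $h_0$ is genuinely well-defined rather than defined only up to ambiguity, and it is here that the precise formulation of equivalence of ping-pong partitions must be used. Once this is settled, monotonicity, continuity, and the compatibility of central extensions reduce to routine checks on the coding.
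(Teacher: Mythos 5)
Your route—coding points of $\Lambda_\nu$ by ends of the Bass--Serre tree and comparing the pulled-back circular orders—is genuinely different from the paper's, which never constructs a boundary map: instead it iteratively \emph{refines} the partition itself (Proposition~\ref{p:refinement_is_ping-pong}), shows that a ping-pong equivalence extends to each refinement (Lemma~\ref{lem:extension_refinement}), and takes the limit of the resulting order-preserving bijections $\theta_k:\Delta_k\to\Delta_k'$ on the increasing, eventually dense sets of endpoints. Your strategy is in the Bowen--Series/Matsumoto tradition and could in principle be made to work, but as written it has two genuine gaps. First, the claim that ``the properness (shrinking) condition forces $\bigcap_n J^\nu_n(\xi)$ to reduce to a single point'' is unjustified and in fact false in this generality: Theorem~\ref{t:conj} concerns representations into $\Homeo_+(\T)$ with no regularity, properness in the sense of (IF~\ref{pp9}--\ref{pp11}) is about faithfulness of edge-group actions and a missed point, not about contraction, and blowing up an orbit of a minimal action produces an action with the same ping-pong partition combinatorics in which a ray codes a nondegenerate wandering arc. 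So $\Phi_\nu$ is at best a map to closed arcs, and the entire subsequent construction of $h_0$ has to be rebuilt around that (this is, after all, why the conclusion is only semi-conjugacy).

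Second, and more seriously, the middle step---``the circular order pulled back to $\partial X$ via $\Phi_\nu$ depends only on the cyclic arrangement of the pieces and the permutations induced by the generators; this is precisely the data encoded by the equivalence''---is asserted, but it is exactly the content that requires proof, and it is where essentially all of the paper's work lies. Definition~\ref{d:equivalence_PPP} only records the cyclic order of the intervals of $\cI$ and the inclusion/Markovian relations at the \emph{first} generation; to conclude that the nested arcs along two rays are interleaved in the same way for $\rho_1$ and $\rho_2$ at every depth, one must propagate the equivalence through iterated images, dealing with Markovian (expanding) images, collapsed versus noncollapsed gaps, the multivaluedness issue of Remark~\ref{r:multivalued}, and the action of the vertex and edge groups on the pieces (the torsion difficulty). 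This is precisely what Proposition~\ref{p:refinement_is_ping-pong} and Lemma~\ref{lem:extension_refinement} accomplish via Claims~\ref{l.consecutif} and~\ref{l.refinment_Markov} and Remark~\ref{r.gen}. Your closing paragraph correctly senses that something must be checked here, but locates it in the fibre structure of $\Phi_\nu$ rather than in the depth-by-depth propagation of the order, and supplies no argument for either. Without that propagation your construction of $h_0$ is circular at its key step.
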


\begin{rem}
In the statement of Theorem~\ref{t:conj}, the representations are automatically  injective: as we will see with Theorem~\ref{p:faithful}, a proper ping-pong partition for a virtually free group forces the action to be faithful.
\end{rem}

To give a more concrete picture, in the classical case of discrete groups $\Gamma$ in $\PSL(2,\R)$, the ping-pong partition that we consider corresponds to some coding for the geodesic flow on the unit tangent bundle $\PSL(2,\R)/\Gamma$.
However, our construction is more dynamical than geometrical, based on the recent work of Deroin, Kleptsyn and Navas \cite{DKN2014}, where Theorem~\ref{t:main} is proved for \emph{free groups}. In their work, this is the key for establishing important ergodic properties of virtually free groups of real-analytic circle diffeomorphisms, solving old conjectures by Ghys, Sullivan and Hector in this particular setting. 
Several other consequences of the  ping-pong partitions are described in the companion paper~\cite{MarkovPartitions2}, by a slightly different list of authors. We have decided to put the main focus of this first part on group-theoretical aspects, while the second one will treat concepts of more dynamical nature.
Let us however describe the main application that motivated this first work, whose proof appears in \cite{MarkovPartitions2}: ping-pong partitions allow to solve an old conjecture by Dippolito \cite{Dippolito} in the significant case of foliations arising as suspensions of group actions, in real-analytic regularity (see \cite{MarkovPartitions2} for more details, context and history).

\setcounter{thmA}{3}

\begin{thmA}[Dippolito conjecture in class $C^\omega$]\label{t-Dippolito}
	Let $G\subset\Diff_+^\omega(\T)$ be a group of real-analytic circle diffeomorphisms with an invariant Cantor set. Then the action of $G$ is semi-conjugate to an action by piecewise-linear homeomorphisms. More precisely, every such $G$ is semi-conjugate to a subgroup of Thompson's group $T$.
\end{thmA}

The idea of the proof is that if a group acts on the circle with a proper ping-pong partition (which is the case after Theorem \ref{t:main}), one can perturb the action so that generators are PL homeomorphisms admitting the same (or equivalent) ping-pong partition. Theorem \ref{p:faithful} guarantees that the action remains faithful and Theorem \ref{t:conj} implies that the semi-conjugacy class does not change. Finally, one can take a representative in the equivalence class of the ping-pong partition with the property that endpoints of intervals are all dyadic rationals (that is, in $\Z[\tfrac12]/\Z\subset \T$) so that the PL construction gives a subgroup of Thompson's group $T$.
This is not difficult in the case of a free group, but more technical arguments are needed for arbitrary virtually free groups, and the main ingredient is represented by \cite[Proposition 3.7]{MarkovPartitions2}.

If for the case of free groups Theorem \ref{t-Dippolito} is a rather elementary consequence of the work of Deroin, Kleptsyn and Navas \cite{DKN2014} (see \cite[Lemma 2.6]{isolated}), solving it for general groups requires the whole generality of Theorems \ref{t:main} and \ref{t:conj}. The main difficulty is that we have to consider actions of groups with \emph{torsion elements}, which is often a delicate task. Here we solve this problem by replacing the use of normal forms in \cite{DKN2014} by actions on trees: a marking of a virtually free group $G$ defines a partition of the boundary of the group $\partial G$ (\S\ref{sc:arboreal_part}), on which the group $G$ plays ``ping-pong'' (\S\ref{sc:arboreal_ping-pong}), which has a good behaviour when passing to finite-index subgroups (\S\ref{sc:finite_index}), and induces a ping-pong partition for the action on the circle, which corresponds to the construction of Deroin--Kleptsyn--Navas in the case of free groups (Section \ref{s:topskel_gen}). This is essentially the outline of the proof of Theorem~\ref{t:main}. For Theorem~\ref{t:conj}, we have to face the extra difficulty that is to formalize a good notion of ping-pong for virtually free groups: this should be good enough so that when introducing ping-pong partitions for actions on the circle (Definition~\ref{d:markov-partition}), the notion ensures that finitely many combinatorial data are enough to recover the semi-conjugacy class of the action (\S\ref{sc:proof_thmB}). This problem requires a long detour through Bass--Serre theory (Section \ref{s:BS}), which culminates in a generalized ping-pong lemma for fundamental groups of graphs of groups (Theorem~\ref{p:faithful}), a key result which is also of independent interest.

\smallskip

Observe that in this work we do not address the question of determining what  ping-pong partitions actually come from an action of a given group.  In basic cases as a free product like $F_n$ or $\Z_2*\Z_3$ ($\cong \PSL(2,\Z)$), a systematic description of all  ping-pong partitions seems out of the reach \cite{MarkovPartitions2,matsumoto-psl, isolated}.
Related to this problem, we prove however the following very general result:
\setcounter{thmA}{2}
\begin{thmA}\label{t:RealVirtFree}
Let $G\subset \Homeo_+(\T)$ be a finitely generated, virtually free group (possibly virtually cyclic). Then $G$ is \emph{free-by-finite cyclic}: there exists a free subgroup $F\subset G$ of index $m\in \N$ such that $G$ fits into a short exact sequence
\[
1\to F\to G\to \Z_m\to 0.
\]
Conversely, any finite cyclic extension of a free group can be realized as a locally discrete group of real-analytic circle diffeomorphisms. 
\end{thmA}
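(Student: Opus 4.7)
For the first part, my plan is to combine Bass--Serre theory with the rotation number to construct an explicit homomorphism from $G$ to a finite cyclic group with torsion-free kernel. Since every finite subgroup of $\Homeo_+(\T)$ is cyclic (a theorem of Kerékjártó, as every finite-order orientation-preserving circle homeomorphism is topologically conjugate to a rotation), all finite subgroups of $G$ are cyclic; combined with the theorem of Karrass--Pietrowski--Solitar, this presents $G$ as the fundamental group of a finite graph of groups $\mathcal{G}$ whose vertex and edge groups are all finite cyclic. For each vertex $v$, write $A_v=\langle a_v\rangle$ with $|A_v|=n_v$; since $a_v$ has order exactly $n_v$ in $\Homeo_+(\T)$, its rotation number has the form $\rot(a_v)=r_v/n_v$ with $\gcd(r_v,n_v)=1$.

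Setting $m:=\mathrm{lcm}\{n_v\}$, I would define $\phi\colon G\to\Z/m\Z$ on generators by $\phi(a_v):=m\cdot\rot(a_v)$ and by $0$ on the stable letters attached to non-tree edges. The critical step is compatibility on edges: for each edge $e$ with edge subgroup $\langle c_e\rangle$, viewed as a specific power of both $a_{s(e)}$ and $a_{t(e)}$, the two resulting values of $\phi(c_e)$ must coincide. This is forced by the conjugation invariance of the rotation number, since $c_e$ is a single element of $G\subset\Homeo_+(\T)$ with one well-defined rotation number, so the computations via either vertex agree. A direct check then shows that $\phi(a_v^k)\neq 0$ in $\Z/m\Z$ for $0<k<n_v$; hence $\ker\phi$ is torsion-free, and as a subgroup of the virtually free group $G$, it is therefore free. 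Setting $F:=\ker\phi$ and $m':=|\phi(G)|$ yields the desired short exact sequence $1\to F\to G\to\Z_{m'}\to 0$.

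For the converse, given $1\to F\to G\to\Z_m\to 1$ with $F$ finitely generated free, any finite subgroup $H\subset G$ satisfies $H\cap F=\{1\}$, so all finite subgroups of $G$ are cyclic, and $G$ is again the fundamental group of a finite graph of finite cyclic groups. To realize $G$ as a locally discrete subgroup of $\Diff_+^\omega(\T)$, I would either (a) construct an explicit ping-pong configuration with a rotation $\sigma$ of order $m$ representing the $\Z_m$ quotient and a Schottky-type free subgroup representing $F$ (using Culler's realization theorem for finite subgroups of $\mathrm{Out}(F)$ to reduce to a split extension $G\cong F\rtimes\Z_m$ when $F$ has rank at least $2$), or (b) in the centerless case, realize $G$ as a Fuchsian group by uniformizing a non-compact hyperbolic $2$-orbifold with $\pi_1=G$. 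The elementary cases $F\in\{1,\Z\}$ (finite cyclic, $\Z$, or infinite dihedral) are handled directly by rotations, a parabolic element, or a pair of involutions respectively. Local discreteness is automatic in all these constructions: in (a) it persists because nonzero powers of $\sigma$ move the Schottky regions apart, and in (b) it follows from the discreteness of the Fuchsian image in $\PSL(2,\R)\subset\Diff_+^\omega(\T)$.

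The main technical obstacle is the edge-compatibility step in the first part, which is precisely where the hypothesis $G\subset\Homeo_+(\T)$ enters essentially: a virtually free group whose finite subgroups are all cyclic need not be free-by-finite-cyclic in general. For example, the HNN extension $\langle a,t \mid a^6=1,\ ta^2t^{-1}=a^{-2}\rangle$ is virtually free with cyclic finite subgroups, yet satisfies $2a=0$ in its abelianization and therefore admits no cyclic quotient faithful on $\langle a\rangle$; this group is precisely excluded from $\Homeo_+(\T)$ by the rotation-number obstruction $\rot(a^2)\neq\rot(a^{-2})$.
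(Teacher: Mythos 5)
Your proof of the first statement is, in substance, the paper's own proof of Theorem~\ref{t:StructureVirtFree}: Karrass--Pietrowski--Solitar, the l.c.m.\ $m$ of the orders of the (necessarily cyclic) vertex groups, the homomorphism to $\Z_m$ given by the rotation number on vertex groups and by $0$ on stable letters, edge-compatibility from conjugation invariance of $\rot$, and freeness of the torsion-free kernel via its action on the Bass--Serre tree. Your closing example $\langle a,t\mid a^6,\ ta^2t^{-1}=a^{-2}\rangle$ correctly isolates where the hypothesis $G\subset\Homeo_+(\T)$ is used. This half is fine.

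The converse is where there is a genuine gap. Route (a) reduces, via the realization theorem for finite subgroups of $\mathrm{Out}(F_n)$, to a split extension $G\cong F\rtimes_\tau\Z_m$; this reduction is legitimate when $\mathrm{rank}(F)\ge 2$ (the extension is the fiber product over $\mathrm{Out}(F)$, and the induced map $\Z_m\to\mathrm{Out}(F)$ lifts after factoring through its image). But you then realize $F\rtimes_\tau\Z_m$ by a rotation $R$ of order $m$ together with Schottky generators $f_1,\dots,f_n$ of $F$, which requires $Rf_iR^{-1}=\tau(f_i)$ as circle diffeomorphisms; the ``rotate the Schottky domains'' construction achieves this only when $\tau$ permutes the symmetric basis $\{f_i^{\pm1}\}$. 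A general finite-order automorphism does not: the order-$3$ outer automorphism of $F_2$ coming from the rotation of the theta graph (so that $F_2\rtimes\Z_3\cong\Z_3*\Z_3$) acts on $H_1(F_2)\cong\Z^2$ by a matrix of order $3$, which is conjugate to no signed permutation matrix, so no free basis of $F_2$ is permuted up to inversion and your recipe produces nothing for this $G$. This is precisely the difficulty that the paper's Proposition~\ref{cor:perm} and Remark~\ref{rem:perm} are designed to circumvent: rather than writing $G$ itself as a semidirect product over a basis-permuting action, one embeds $G$ into the \emph{larger} group $F(S_0)\rtimes\Z_m$, where $S_0$ is the edge set of the quotient graph of the Bass--Serre tree and $\Z_m$ genuinely permutes $S_0\cup S_0^{-1}$; Theorem~\ref{t:RealSemiDirect} realizes that larger group by rotated North--South maps and one restricts to $G$. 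Your fallback (b) does not repair this, because a finitely generated non-cocompact Fuchsian group is a free product of cyclic groups, while a centerless free-by-finite-cyclic group need not be one: in $(\Z_4*_{\Z_2}\Z_6)*\Z_5$ the subgroups $\Z_4$ and $\Z_6$ generate an infinite group yet intersect in $\Z_2$, which the Kurosh subgroup theorem forbids in a free product of cyclic groups. Finally, local discreteness is not ``automatic'' from separating Schottky regions; in the paper it follows because the constructed group has an exceptional minimal set, so that Rebelo's theorem applies.
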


Neither we address the problem of recognition: this should pass through an understanding of how an automorphism of a virtually free group (which essentially changes the marked action on the tree)
modifies a  ping-pong partition (can the combinatorics of the partition be different? This problem should be first studied for free groups, for which we expect a negative answer). Observe that there is an extensive literature about decidability properties for virtually free groups (see to this purpose \cite{McCool,Krstic} and the survey \cite{Vogtmann}).

Finally, we highlight that virtually free groups constitute an important class of locally discrete subgroups of $\Diff_+^\omega(\T)$: it is conjectured \cite{FKone,tokyo,football} that such subgroups can only be virtually free or $C^\omega$-conjugate to a finite central extension of a Fuchsian group (cocompact lattice in $\PSL(2,\R)$). Currently, this has been validated for groups acting with an invariant Cantor set (recall that in \cite{euler} Ghys proves that such groups are virtually free), and for locally discrete groups acting minimally with the so-called property $(\star)$.

\begin{rem}
	In this article we consider only groups of homeomorphisms which preserve the orientation. Any subgroup $G\subset \Homeo(\T)$ contains a subgroup $G_+$ of index at most 2 which preserves the orientation. It is however against the spirit of this article to simplify the problems passing to a finite-index subgroup, and we should also treat virtually free subgroups with orientation-reversing elements. As a disclaimer, when making this simplification we are only leaving aside dynamics of dihedral type, which is not hard to understand, but would lead to heavier notation.
\end{rem}

\section{Markov partitions versus  ping-pong partitions}

In \cite{Bowen-Series}, Bowen and Series described a natural way to encode the action of a Fuchsian group $\Gamma\subset \PSL(2,\R)$ on the circle, constructing \emph{Markov partitions}. Informally speaking, a Markov partition is a finite collection of non-overlapping intervals of the circle, together with a map $T$ piecewise defined over each of these intervals, coinciding with the generator of the Fuchsian group that performs an expansion of the interval. In the construction, the map $T$ is \emph{expanding} and \emph{orbit equivalent} to the action of $\Gamma$ on the circle. Using this construction, Bowen and Series succeeded in studying several ergodic properties of the action of~$\Gamma$ on $\T$.

In a similar spirit, in \cite{DKN2009}, with motivation from the ergodic theory of foliations,  Deroin, Kleptsyn, and Navas associate a Markov partition with any locally discrete, finitely generated group $\Gamma$ of $C^2$ circle diffeomorphisms verifying what they call property~$(\star)$, or $(\Lambda\star)$. 
(Their construction is reminiscent of \cite{CantwellConlon1,CantwellConlon2}.)
In this case, the map $T$ that they define is also expanding and orbit equivalent to $\Gamma$. The explicit construction was pursued in \cite{FK2012_C_eng}.
In \cite{DKN2014}, the same authors directly work with locally discrete, free groups of real-analytic diffeomorphisms, and they build an associated 
Markov partition, thus showing that such a group necessarily has property~$(\star)$, or $(\Lambda\star)$. In the latter situation, the Markov partition actually comes from a \emph{ping-pong partition}, in the sense that the generators of the free group play ping-pong with the intervals of the partition.
In both situations the Markov partitions that Deroin--Kleptsyn--Navas consider, are obtained using techniques of control of the affine distortion.

Compared to the construction in \cite{DKN2014} of  ping-pong partitions, when dealing with \emph{virtually free} groups, we have to overpass the problem of having many ways for writing a given element as a product in the generating system (in particular we have to deal with amalgamated subgroups). 
For this reason, we slightly modify the construction of Deroin, Kleptsyn, and Navas. Actually, our approach is largely inspired by that of \cite{BP}, where Matsumoto studies semi-conjugacy classes of representations of surface groups, introducing the notion of \emph{basic partition}. In particular, we borrow from Matsumoto, and actually back from Maskit \cite{Maskit}, the idea that the action on a Bass--Serre tree induces a partition of the circle. We shall recall the basic notions of Bass--Serre's theory in \S\ref{Bass--Serre}. As the reader will see, a serious difficulty not appearing in \cite{BP} is that virtually free groups which are not free contain torsion elements.

\section{Ping-pong and the DKN construction for free groups}\label{s:DKN}

Before getting started with virtually free groups, we recall the main construction in \cite{DKN2014} for free groups.
In this section, we denote by $G$ a rank-$n$ free group. We choose $S_0$ a system of free generators for $G$, and write $S=S_0\cup S_0^{-1}$. We denote by $\|\cdot\|$ the word norm defined by $S$.
Recall that any element $g$ in $G$ may be written in unique way
\[
g=g_\ell\cdots g_1,\quad g_i\in S,
\]
with the property that if $g_i=s$ then $g_{i+1}\neq s^{-1}$ (we write compositions from right to left). This is called the \emph{normal form} of $g$.
For any $s\in S$, we define the set
\[
W_s:=\left \{g\in G\mid g=g_\ell\cdots g_1\text{ in normal form, with }g_1=s\right \}.
\]
If $X$ denotes the Cayley graph of $G$ with respect to the generating set $S$ (which is a $2n$-regular tree), with the right-invariant distance, then the sets $W_s$ are exactly the $2n$ connected components of $X\setminus\{\mathrm{id}\}$, with $W_s$ being the connected component containing $s$. The choice of the right-invariant distance gives the isometric \emph{right} action of $G$ on $X$: $x\in X\mapsto xg\in X$.
It is easy to see that the generators $S$ play ping-pong with the sets $W_s$:
for any $s\in S$, we have $\left (X\setminus W_{s^{-1}}\right )s\subset W_{s}$.
Using the action on the circle, we can push this ping-pong partition of the Cayley graph of $G$ to a partition of the circle into open intervals with very nice dynamical properties.
Given $s\in S$, we define the subset
\begin{equation}\label{eq:Usets}
U_{s}:=\left\{x\in\T\,\middle\vert\,\exists\text{ neighborhood }I_x\ni x\text{ s.t.~}\lim_{n\to\infty}\sup_{g\notin W_s,\|g\|\ge n}|g(I_x)|=0\right\}.
\end{equation}
In \cite{DKN2014} it is proved the following:
\begin{thm}[Deroin, Kleptsyn, and Navas]\label{t:DKNfree}
Let $G\subset \Diff_+^\omega(\T)$ be a finitely generated, locally discrete, free group of real-analytic circle diffeomorphisms, with minimal invariant set $\Lambda$. Let $S_0$ be a system of free generators for~$G$ and write $S=S_0\cup S_0^{-1}$.
Consider the collection $\{U_s\}_{s\in S}$ defined in \eqref{eq:Usets}. We have:
\begin{enumerate}[1.]
\item \label{i:open_free} every subset $U_s$  is open;

\item \label{i:finite_free} every subset $U_s$ has finitely many connected components;

\item \label{i:intersection_on_minimal} any two different subsets  $U_s$ have empty intersection inside the minimal invariant set $\Lambda$;

\item\label{i:cover} the union of the subsets $U_s$ covers all but finitely many points of $\Lambda$;

\item \label{i:ping-pong_free} if $s\in S$, $t\neq s$ then  
 $s(U_t)\subset U_{s^{-1}}$.
\end{enumerate}
\end{thm}

For $s\in S$, and connected component $I=(x_-,x_+)\subset U_s$ such that a right (respectively, left) neighborhood of $x_-$ (respectively, $x_+$) is contained in a connected component $J_-$ (respectively, $J_+$) of the complement $\T\setminus \Lambda$, denote by $\hat I$ the interval resulting from removing $J_-$ and $J_+$ (whenever they are non-empty) from $I$. Then denote by $\hat U_s$ the subset of $U_s$, formed by the union of the reduced intervals $\hat I$, for $I$ connected component of $U_s$. By construction, and $G$-invariance of $\Lambda$, the family $\{\hat U_s\}_{s\in S}$ satisfies all the properties \ref{i:open_free}.~through \ref{i:ping-pong_free}.~of Theorem~\ref{t:DKNfree}, and moreover they are pairwise disjoint, strengthening property \ref{i:intersection_on_minimal}. The properties enjoyed by the family $\{\hat U_s\}_{s\in S}$ correspond to what we want to be a ping-pong partition for a free group of circle homeomorphisms.

\begin{dfn}\label{d:ping-pong_free}
Let $G\subset \Homeo_+(\T)$ be a finitely generated, free group of circle homeomorphisms and let $S=S_0\cup S_0^{-1}$ be a symmetric free generating set. A collection $\{\hat U_s\}_{s\in S}$ of subsets of $\T$ is a \emph{ping-pong partition} for $(G,S)$ if it verifies the following conditions:
\begin{enumerate}[1.]
	\item every subset $\hat U_s$ is non-empty, union of finitely many open intervals;
	
	\item any two different subsets $\hat U_s$ have empty intersection;
	
	\item if $s\in S$, $t\neq s$ then  
	$s(\hat U_t)\subset \hat U_{s^{-1}}$.
\end{enumerate}

The \emph{skeleton} of the ping-pong partition is the data consisting of
\begin{enumerate}
\item the cyclic order in $\T$ of  connected components of $\bigcup_{s\in S}\hat U_s$, and 
\item for each $s\in S$, the assignment of connected components
\[
\lambda_s:\pi_0\left (\bigcup_{t\in S\setminus \{s\}}\hat  U_t\right )\to \pi_0\left (\hat U_{s^{-1}}\right )
\]
induced by the action.
\end{enumerate}
\end{dfn}

\begin{rem}
In \cite{DKN2014} the definition of the sets $U_s$ (there called $\widetilde{\mathcal{M}_\gamma}$) is slightly different based on a control on the sum of derivatives along geodesics in the group. Here the definition that we adopt is simply topological, as we consider how neighborhoods are contracted along geodesics in the group.
This difference in the definition leads to possibly different sets: one can show that $U_s$ contains the corresponding $\widetilde{\mathcal{M}_s}$, and the complement $U_s\setminus \widetilde{\mathcal{M}_s}$ is a finite number of points, which are topologically hyperbolic fixed points $x$ for some element $g$ in the group, but with derivative $g'(x)=1$. See \cite[Appendix A]{MarkovPartitions2} for more details.
\end{rem}

Even with the different definition, the proof of Theorem~\ref{t:DKNfree} proceeds as in \cite{DKN2014}. The hardest part is to prove property \ref{i:finite_free}, which is Lemma 3.30 in \cite{DKN2014}, and property \ref{i:cover}, which actually occupies most part of \cite{DKN2014} (because it comes as a consequence of property $(\star)$). For the other properties, we will give  more details and references when proving the more general Theorem~\ref{t:DKNmarkov3}.

\begin{rem}
Ping-pong partitions and skeletons for free groups already appear in \cite{MR,isolated} as ping-pong actions and configurations, respectively, with slightly different conventions. 
\end{rem}

Let us end this section by stating Theorem~\ref{t:conj} for the particular case of free groups. For this, we first need the following:

\begin{dfn}
Let $\rho_\nu:(G,S)\to \Homeo_+(\T)$, $\nu\in \{1,2\}$, be two injective representations of a finitely generated, free group with a marked symmetric free generating set $S=S_0\cup S_0^{-1}$. Let $\{U_s^{\nu}\}_{s\in S}$, be a ping-pong partition for $\rho_\nu(G,S)$, for $\nu\in \{1,2\}$.
We say that the two partitions are \emph{equivalent} if they have the same skeleton.
\end{dfn}

\begin{prop}
Let $\rho_\nu:(G,S)\to \Homeo_+(\T)$, $\nu\in \{1,2\}$, be two injective representations of a finitely generated, free group with a marked symmetric free generating set $S=S_0\cup S_0^{-1}$. Suppose that the actions on $\T$ have equivalent ping-pong partitions. Then the actions are semi-conjugate.
\end{prop}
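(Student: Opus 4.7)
The plan is to exploit the combinatorial rigidity of the common skeleton in order to construct an order-preserving bijection between dense $G$-invariant subsets of $\T$, and then extend it by monotonicity to the desired semi-conjugacy.

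First, I would interpret each ping-pong partition as producing a coding map $\pi_\nu:\partial G\to\T$, where $\partial G$ denotes the boundary of the free group (right-infinite reduced words in $S$). For an infinite reduced word $\xi=s_1 s_2\cdots$, define $\pi_\nu(\xi)$ as the limit of the strictly nested intervals
\[
\hat U_{s_1}^\nu\;\supset\;\rho_\nu(s_1^{-1})(\hat U_{s_2}^\nu)\;\supset\;\rho_\nu(s_1^{-1}s_2^{-1})(\hat U_{s_3}^\nu)\;\supset\;\cdots,
\]
each nesting being forced by the ping-pong property (Theorem~\ref{t:DKNfree}), and landing in a component singled out by $\lambda_{s_i}$. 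The coding is manifestly $G$-equivariant (with the left action of $G$ on $\partial G$), and which component each finite prefix lands in is dictated entirely by the skeleton, hence identical for $\nu=1,2$.

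Second, I would observe that the cyclic order of $\pi_\nu(\partial G)$ in $\T$ is itself skeleton-determined: two distinct reduced words $\xi,\eta$ first diverge at some index $n$, and the cyclic order of $\pi_\nu(\xi)$ and $\pi_\nu(\eta)$ coincides with the (skeleton-prescribed) cyclic order of the two components appearing at level $n$. Consequently $h_0:=\pi_2\circ\pi_1^{-1}$ is a well-defined order-preserving map from $\pi_1(\partial G)$ onto $\pi_2(\partial G)$ (the finitely-to-one fiber ambiguity, present at the endpoints of complementary gaps, can be resolved consistently by using that both $\rho_\nu$ are orientation-preserving). Extending $h_0$ by $G$-equivariance to the full $G$-orbit of endpoints of the $\hat U^1_s$ yields an order-preserving map $\phi$ between two countable $G$-invariant subsets of $\T$.

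Finally, I would lift to the universal cover: choose central lifts $\widehat{\rho}_\nu:\widehat{G}\to\Homeo_{\Z}(\R)$ which agree on a single reference point, so that the lifted $\phi$ is $\widehat{G}$-equivariant on a $\widehat{G}$-invariant countable subset of $\R$. Completing $\phi$ by $h(y):=\sup\{\phi(x):x\le y,\ x\text{ in the domain of }\phi\}$ produces a non-decreasing function $h:\R\to\R$ commuting with integer translations, which is the required semi-conjugacy by construction. The main obstacle will be the careful verification of the well-definedness and $G$-equivariance of $h_0$: one has to confirm that two symbolic descriptions of the same point in $\T$ collapse consistently under $h_0$, and that any fixed-point behaviour of non-trivial elements (e.g.\ hyperbolic fixed points sitting on interval boundaries) is already captured by the skeleton. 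This is precisely the place where injectivity of the two representations is used, to exclude spurious collapses not dictated by the combinatorics.
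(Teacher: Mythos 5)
Your outline is essentially correct, but it takes a genuinely different route from the one the paper follows. The paper gives no direct proof of this proposition: it points to the classical references cited right after the statement and subsumes the free case into the proof of Theorem~\ref{t:conj} in \S\ref{sc:proof_thmB}, which avoids the boundary $\partial G$ altogether. There one iteratively \emph{refines} the finite partition by removing the $\cG$-images of the gaps, shows that each refinement is again a ping-pong partition (Proposition~\ref{p:refinement_is_ping-pong}), extends the combinatorial equivalence to each refinement (Lemma~\ref{lem:extension_refinement}), and passes to the limit on the increasing union $\Delta_\infty=\bigcup_k\Delta_k$ of endpoint sets, obtaining an equivariant, circular-order-preserving correspondence that extends to a monotone degree-one map. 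Your coding $\pi_\nu:\partial G\to\T$ by nested components is the standard argument for free groups and is shorter and more transparent here; what the paper's refinement scheme buys is that it only ever manipulates finite combinatorial data, which is exactly what survives the passage to virtually free groups, where torsion destroys reduced words and hence the boundary coding you rely on.

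Two points in your write-up need more care. First, the nested intersection $\bigcap_n\rho_\nu(s_1^{-1}\cdots s_n^{-1})(\hat U^\nu_{s_{n+1}})$ need not be a single point: it is a compact interval, possibly nondegenerate (wandering intervals are compatible with a ping-pong partition). So $\pi_\nu$ should be treated as a map to intervals with pairwise disjoint interiors, and the order comparison must be made on triples of boundary points (the cyclic order of two points is not defined). Second, and more seriously, $h_0=\pi_2\circ\pi_1^{-1}$ is genuinely not well defined as a function: $\pi_1$ may identify two boundary points that $\pi_2$ separates (for instance, a minimal $\rho_1$ and a $\rho_2$ obtained from it by blowing up an orbit share the same skeleton). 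This is not a ``spurious collapse'' that injectivity excludes --- injectivity does not prevent it, and it is precisely the phenomenon that forces the conclusion to be semi-conjugacy rather than conjugacy. The fix is the one you gesture at but should make explicit and move earlier: never invert $\pi_1$ pointwise, and instead define $h$ on the universal cover directly by the monotone formula $h(y)=\sup\{\widehat{\pi}_2(\xi)\mid \widehat{\pi}_1(\xi)\le y\}$, whose monotonicity and $\widehat{G}$-equivariance follow from the fact that both actions preserve orientation. With that adjustment your argument goes through.
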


The proof of this result is somehow classical (see \cite[Theorem 4.7]{BP} and \cite[Lemma 4.2]{MR}). We will extend this result to virtually free groups in Section \ref{s:topolconj}.

\section{Algebraic set-up: virtually free groups and Bass--Serre theory}\label{s:BS}

\subsection{Bass--Serre theory in a nutshell}\label{Bass--Serre}

The basic principle of Bass--Serre theory is to describe the algebraic structure of a group by studying actions on trees. By \emph{trees} we mean \emph{simplicial} trees, namely connected graphs without cycles or, more formally, connected, simply connected, one-dimensional CW complexes. The trees have \emph{oriented edges}, and the group actions on trees preserve the orientation.
A tree is \emph{locally finite} if each vertex has only finitely many edges attached to it.
Serre's book \cite{serre} is probably the exemplary reference for an introduction to the theory (see also \cite{DD} for a more detailed reference).
That such a point of view is fruitful, it can be already perceived in one of its first applications: a group~$G$ is free if and only if there exists a free action of $G$ on a tree (a free action is an action with trivial stabilizers). It then appears almost tautological that a subgroup of a free group is free (Nielsen--Schreier theorem). 
Also virtually free groups can be characterized in a similar way \cite[Corollary IV.1.9]{DD}:
\begin{thm}[Karrass, Pietrowski, and Solitar \cite{virtually_free}]\label{t:virt_free}
A group $G$ is virtually free if and only if there exists a proper action of $G$ on a tree. Moreover, if $G$ is finitely generated, the tree may be taken to be locally finite and the action cocompact.
\end{thm}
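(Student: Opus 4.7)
Both directions of the theorem proceed through Bass-Serre theory, combined with classical structure theorems for groups of more than one end.

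For the direction ($\Leftarrow$), suppose $G$ acts properly on a tree $X$. By the fundamental theorem of Bass-Serre theory (recalled in \S\ref{Bass-Serre}), $G$ is the fundamental group $\pi_1(\mathcal{G},Y)$ of the graph of groups obtained from the quotient $Y=G\backslash X$, whose vertex and edge groups are stabilizers, finite by properness of the action. The plan is to produce a finite-index subgroup $K\leq G$ that acts freely on $X$: by Serre's classical theorem, such a $K$ is automatically free. To construct $K$ it suffices to find a homomorphism $\phi\colon G\to Q$ to a finite group $Q$ which is injective on each vertex stabilizer (equivalently, on each of the finitely many $G$-conjugacy classes of finite subgroups arising as stabilizers, in the cocompact case). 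One builds such a $\phi$ by covering-theoretic means: explicitly unfold the graph of groups $(\mathcal{G},Y)$ to a finite cover in which all vertex groups become trivial, taking $Q$ to be the finite group of deck transformations; then $K=\ker\phi$ acts freely on $X$, so $K$ is free of finite index in $G$.

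For the direction ($\Rightarrow$), suppose $G$ is finitely generated and virtually free with free finite-index subgroup $F$. If $F$ is trivial or infinite cyclic, then $G$ is finite or virtually cyclic and acts on an obvious tree (a point or a line), so assume $F$ has rank at least two. Then $F$---and hence $G$---has infinitely many ends. By Stallings' theorem on ends of groups, $G$ admits a nontrivial splitting as an amalgamated product $A\ast_C B$ or HNN extension $A\ast_C$, with $C$ finite. Since finite-index subgroups of virtually free groups are again virtually free, the vertex groups $A, B$ are themselves finitely generated virtually free, and one iterates. Dunwoody's accessibility theorem ensures that the iteration terminates after finitely many steps, yielding a finite graph of finite groups $(\mathcal{G},Y)$ with $G\cong\pi_1(\mathcal{G},Y)$. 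The associated Bass-Serre tree $X$ is locally finite (each vertex of $X$ has finite valence because $Y$ is a finite graph and all edge groups are finite, so only finitely many edges of $X$ issue from each vertex), and $G$ acts cocompactly on $X$ with finite stabilizers.

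The main obstacle is direction ($\Rightarrow$), which rests on the two deep results of Stallings (splitting over finite subgroups when the number of ends is at least two) and Dunwoody (accessibility of finitely presented groups); together these constitute the substantive content of the KPS theorem and genuinely go beyond the formal language of Bass-Serre theory. In direction ($\Leftarrow$), the comparatively more delicate point is the construction of the finite quotient $Q$ separating the vertex stabilizers; this amounts to a form of residual finiteness for fundamental groups of finite graphs of finite groups, and can be handled by a direct covering-theoretic argument exploiting only the finite order of the vertex and edge groups.
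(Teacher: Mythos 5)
First, a remark on the comparison you asked for: the paper offers no proof of this statement. It is quoted as the Karrass--Pietrowski--Solitar theorem with references to \cite{virtually_free} and \cite{DD}, and is used downstream only as a black box (the closest the paper comes to reproving anything of this flavour is Theorem~\ref{t:StructureVirtFree}, where a finite quotient separating the vertex groups is built by hand via rotation numbers for subgroups of $\Homeo_+(\T)$). So your task is really to certify the classical result, and your outline is the standard modern one: Stallings' theorem on ends plus accessibility for ($\Rightarrow$), and a finite-index subgroup acting freely on the tree for ($\Leftarrow$). In outline this is correct, but a few points need repair.

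In ($\Rightarrow$): the vertex groups $A,B$ of a Stallings splitting are \emph{not} finite-index subgroups of $G$, so ``finite-index subgroups of virtually free groups are again virtually free'' is not the relevant fact. What you need is that \emph{arbitrary} subgroups of virtually free groups are virtually free (intersect with the free finite-index subgroup and apply Nielsen--Schreier), together with the separate, standard fact that vertex groups of a splitting of a finitely generated group over finite edge groups are finitely generated; only then can you iterate and invoke accessibility (available here because finitely generated virtually free groups are finitely presented). In ($\Leftarrow$): your argument as written only treats cocompact actions, while the first sentence of the statement assumes none. For finitely generated $G$ you should first pass to the minimal invariant subtree, on which the action is automatically cocompact with finite quotient graph, and then run your argument there. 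Without finite generation, or at least a uniform bound on the orders of the stabilizers, the ``if'' direction is genuinely false as literally stated: $\ast_{n\ge 2}\Z_n$ acts properly on its Bass--Serre tree, yet every finite-index subgroup meets some factor $\Z_n$ nontrivially and hence contains torsion. Finally, ``taking $Q$ to be the finite group of deck transformations'' presumes the finite cover is regular; this is unnecessary --- it suffices to produce a finite-index subgroup whose induced graph-of-groups decomposition has trivial vertex groups, and that subgroup acts freely on $X$ and is free by Serre's theorem, with no normality required.
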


The fundamental notion in Bass--Serre theory is that of \emph{graph of groups} and its \emph{fundamental group}. The definitions are more intuitive when starting from a (left) action of a group $G$ on a tree $X$. Denote by $\overline{X}=(V,E)$ the quotient graph $X/G$, whose vertices correspond to the orbits of the vertices of $X$ under the action of $G$, and similarly do its edges. One defines the graph of groups $(\overline X; G_v, A_{e})$ attaching to any vertex $v\in V$ of $\overline{X}$ the stabilizer $G_v$ of a vertex in the orbit it represents, and to any edge $e\in E$ of $\overline{X}$, the stabilizer $A_{e}$ of one edge in the orbit (observe that $A_e=A_{\overline{e}}$, where $\overline{e}$ denotes the edge $e$ with reversed orientation). Choosing the representatives of the vertices in an appropriate way, one can see an edge group $A_{e}$ as the intersection of the vertex groups $G_{o(e)}$ and $G_{t(e)}$ (here we denote by $o(e)\in V$ and $t(e)\in V$ respectively the \emph{origin} and the \emph{target} of the oriented edge $e\in E$; if an edge is fixed, the two vertices on it are fixed, too). One formally writes this introducing \emph{boundary injections} $\alpha_e:A_e\to G_{o(e)}$, $\omega_e:A_e\to G_{t(e)}$.

Reciprocally, starting from a graph of groups $(\overline X; G_v, A_{e})$, one constructs a tree $X$, called the \emph{Bass--Serre tree}, and a group $G$, called the \emph{fundamental group} of $(\overline X; G_v, A_e)$ and denoted by $\pi_1(\overline X; G_v, A_e)$, such that:
\begin{itemize}
\item the group $G$ acts on $X$ and the quotient graph $X/G$ is isomorphic to $\overline{X}$;
\item the stabilizer of any vertex in $X$ is isomorphic to the group $G_v$, where $v\in V$ is the projection of the vertex to $\overline{X}$;
\item the stabilizer of any edge in $X$ is isomorphic to the group $A_{e}$, where $e\in E$ is the projection of the edge to $\overline{X}$.
\end{itemize}
Clearly, starting from a left action of a group $G$ on a tree $X$, then $G$ is isomorphic to the fundamental group of the graph of groups it defines, and the action on the Bass--Serre tree is conjugate to the action of $G$ on $X$. 
At the level of combinatorial group theory, from an action of a group $G$ on a tree $X$, one derives a \emph{presentation} for $G$. For this, we fix a \emph{spanning tree} $T=(V,E_T)\subset \overline{X}$. One has the isomorphism
\begin{equation}\label{eq:pres_fundamental_group}
G\cong \sbox0{$G_v,E\, \left\vert \begin{array}{lr}
\rel (G_v)& \text{for }v\in V,\\
\overline{e}=e^{-1}&\text{for }e\in E,\\
e=\mathrm{id} & \text{for }e\in E_T,\\
e^{-1}\alpha_e(g)e=\omega_e(g)&\text{ for }e\in E, g\in A_e\end{array}\right.$}
\mathopen{\resizebox{1.3\width}{\ht0}{$\Bigg\langle$}}
\usebox{0}
\mathclose{\resizebox{1.3\width}{\ht0}{$\Bigg\rangle$}}
\end{equation}
where $\rel (H)$ denotes the relations in the group $H$. The second and third sets of relations imply that $G$ is generated by the vertex groups $G_v$ and the symmetric set of edges $S\subset E\setminus E_T$ (here \emph{symmetric} means that one has the relations $\overline s=s^{-1}$ for $s\in S$). In particular, the classical fundamental group $\pi_1(\overline X,*)$ (which is a free group of rank $1-\chi(\overline X)$) is naturally a subgroup of $\pi_1(\overline X; G_v, A_e)$, freely generated by the symmetric set of edges $S$.

There are two standard particular classes of graphs of groups.
\begin{enumerate}
\item \emph{Amalgamated products.} When the graph of groups has simply two vertices and one edge, the fundamental group of the graph of groups is the amalgamated product of the vertex groups over the edge group:
\[G_1*_A G_2=\left \langle G_1,G_2\mid \rel(G_i), \alpha(a)=\omega(a)\text{ for }a\in A\right\rangle.\]
\item \emph{HNN extensions.} When the graph of groups has one only vertex and a self-edge, the fundamental group of the graph of groups is an HNN extension of the vertex group:
\[
G_0*_A=\left \langle G_0,s\mid \rel(G_0), s^{-1}\alpha(a)s=\omega(a)\text{ for }a\in A\right \rangle.
\]
\end{enumerate}

\begin{rem}\label{r:GraphsOfGroups}
Every fundamental group of a graph of groups is obtained by iterating amalgamated products and HNN extensions.
\end{rem}

\subsection{A first result: structure of virtually free groups acting on the circle}
\label{sc:virt_free_circle}

After this digression we come back to groups acting on the circle. From Karrass--Pietrowski--Solitar Theorem (Theorem \ref{t:virt_free}) we derive a very precise description of virtually free groups in $\mathsf{Homeo}_+(\T)$.
In \cite{euler}, Ghys proves that a finitely generated group $G$ of real-analytic circle diffeomorphisms with minimal invariant Cantor set has a finite-index, normal free subgroup $H$ such that the quotient $G/H$ is abelian (see Remarque 4.4 in \cite{euler}). Here we generalize this result:

\begin{thm}\label{t:StructureVirtFree}
Let $G\subset \mathsf{Homeo}_+(\T)$ be a finitely generated virtually free group (possibly virtually cyclic). Then $G$ is free-by-finite cyclic: there exists a normal free subgroup $H\subset G$ of finite index such that the quotient $G/H$ is cyclic.
\end{thm}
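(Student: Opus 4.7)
The plan is to construct an explicit surjective character $\chi \colon G \to \mathbb{Z}/N$ whose kernel is torsion-free; the kernel will then be a free group by Bass--Serre theory, and $G/\ker\chi \cong \mathbb{Z}/N$ will be cyclic. The key dynamical input is that rotation number is a conjugation-invariant that restricts to a homomorphism on every finite cyclic subgroup of $\Homeo_+(\T)$, allowing the local characters on vertex groups to be glued consistently across the graph of groups.

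The first step is to invoke Theorem~\ref{t:virt_free} to realise $G$ as the fundamental group of a finite graph of finite groups $(\overline X; G_v, A_e)$, arising from a proper cocompact action on a locally finite tree $X$. Since every finite subgroup of $\Homeo_+(\T)$ is conjugate to a group of rotations (a classical theorem of Kerékjártó, or a direct averaging argument producing an invariant probability measure on $\T$), each vertex group $G_v$ and each edge group $A_e$ is finite cyclic; write $n_v := |G_v|$. On each $G_v$, the map $g \mapsto \rot(g)$ is then a homomorphism into $\mathbb{Q}/\mathbb{Z}$, and it is injective because $G_v$ contains a unique element of rotation number $1/n_v$, which already generates $G_v$.

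Using the presentation~\eqref{eq:pres_fundamental_group}, define $\chi \colon G \to \mathbb{Q}/\mathbb{Z}$ by $\chi(g) := \rot(g)$ on each vertex group, and $\chi(e) := 0$ on every edge symbol. The relations internal to each $G_v$, together with $\overline e = e^{-1}$ and $e = \mathrm{id}$ for $e \in E_T$, are immediate. The HNN-type relations $e^{-1} \alpha_e(a) e = \omega_e(a)$ collapse in the abelian target to $\chi(\alpha_e(a)) = \chi(\omega_e(a))$. Here lies the crux of the argument: in the standard Bass--Serre setup, $\alpha_e$ and $\omega_e$ send any $a \in A_e$ to conjugates of itself by elements of $G \subset \Homeo_+(\T)$, so since rotation number is a conjugation invariant in $\Homeo_+(\T)$, both sides equal $\rot(a)$. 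Hence $\chi$ extends to a well-defined homomorphism on $G$.

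Finally, set $H := \ker \chi$. Since $\chi$ is injective on each $G_v$, the subgroup $H$ meets every vertex stabiliser (and therefore every edge stabiliser) trivially, so $H$ acts freely on the tree $X$; by the Bass--Serre theorem that a group acting freely on a tree is free, $H$ is free. The image $\chi(G)$ is the subgroup of $\mathbb{Q}/\mathbb{Z}$ generated by $\{1/n_v\}_v$, which equals $\tfrac{1}{N}\mathbb{Z}/\mathbb{Z} \cong \mathbb{Z}/N$ with $N := \operatorname{lcm}_v n_v$, yielding the short exact sequence
\[
1 \to H \to G \to \mathbb{Z}/N \to 0.
\]
The main obstacle in this approach is the well-definedness of $\chi$: an abstract amalgam of finite cyclic groups need not admit such a character, and the dynamical hypothesis $G \subset \Homeo_+(\T)$ is used precisely to force the two local characters on any edge group $A_e$ to agree, via the conjugation-invariance of the rotation number.
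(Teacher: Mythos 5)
Your argument is essentially identical to the paper's proof of Theorem~\ref{t:StructureVirtFree}: both realise $G$ via Theorem~\ref{t:virt_free} as the fundamental group of a graph of finite (hence cyclic) groups and use the conjugation-invariance of the rotation number to glue the local characters $\rot\colon G_v\to\Z_m$ into a homomorphism $\pi\colon G\to\Z_m$ (with $m=\mathrm{lcm}_v|G_v|$) killing the edge generators, whose kernel meets every vertex group trivially and is therefore free by Bass--Serre theory. The proposal is correct and follows the same route.
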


The proof takes inspiration from a standard procedure for finding a finite-index free subgroup inside a virtually free group \cite[\S II.3.6]{Dicks} (see also \cite[\S I.7-9]{DD}).

\begin{proof}
	By Theorem~\ref{t:virt_free}, the group $G$ is isomorphic to the fundamental group of a graph of groups $(\overline X; G_v,A_e)$ with finite vertex groups. Take 
\[m:=\mathsf{l.c.m.}_{v\in V}|G_v|,\]
and define a homomorphism $\pi:G\to \Z_m$ as follows. Observe that as $G_v\subset \Homeo_+(\T)$ is a finite subgroup, it must be cyclic.
\begin{claim}
	There exist homomorphic embeddings $\pi_v:G_v\hookrightarrow \Z_m$ such that $\pi_{o(e)}\restriction_{\alpha_e(A_{e})}$ and $\pi_{t(e)}\restriction_{\omega_e(A_{e})}$ coincide for any edge $e\in E$.
\end{claim}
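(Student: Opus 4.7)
The plan is to use the \emph{rotation number} as the natural canonical invariant on each vertex group, so that all embeddings and boundary injections become compatible ``for free''. Since each vertex group $G_v$ is a finite subgroup of $\Homeo_+(\T)$, a classical theorem going back to Hölder asserts that $G_v$ is topologically conjugate to a subgroup of $\mathrm{SO}(2)$; in particular $G_v$ is cyclic of order $n_v=|G_v|$ (as already noted), and the rotation number
\[
\rot:\Homeo_+(\T)\to\R/\Z
\]
restricts on $G_v$ to an injective \emph{homomorphism} with image $\tfrac{1}{n_v}\Z/\Z$.

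Since $n_v$ divides $m$ by definition of $m$, there is a chain of inclusions $\tfrac{1}{n_v}\Z/\Z\subset \tfrac{1}{m}\Z/\Z\cong\Z_m$, the latter identification being $k/m\mapsto k\bmod m$. One then defines
\[
\pi_v:G_v\to\Z_m,\qquad g\longmapsto m\cdot \rot(g)\bmod m,
\]
which is an injective homomorphism by what precedes.

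It remains to check compatibility at every edge. Recall that the graph of groups $(\overline X;G_v,A_e)$ is constructed from the action of $G$ on the tree $X$ by selecting, for each vertex $v\in V$ of $\overline X$, a lift $\tilde v\in X$ and setting $G_v:=\stab{G}{\tilde v}$; and for each edge $e\in E$ of $\overline X$, a lift $\tilde e\in X$ with $o(\tilde e)=\widetilde{o(e)}$, setting $A_e:=\stab{G}{\tilde e}\subset G_{o(e)}$ and taking $\alpha_e$ to be the tautological inclusion. The target vertex $t(\tilde e)$ belongs to the orbit of $\widetilde{t(e)}$, so there exists $h_e\in G$ with $t(\tilde e)=h_e\cdot \widetilde{t(e)}$, and the boundary injection $\omega_e:A_e\to G_{t(e)}$ is given by $a\mapsto h_e^{-1}ah_e$. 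Hence, for any $a\in A_e$, the elements $\alpha_e(a)=a$ and $\omega_e(a)=h_e^{-1}ah_e$ are conjugate inside $G\subset\Homeo_+(\T)$. Since the rotation number is a conjugacy invariant, $\rot(\alpha_e(a))=\rot(\omega_e(a))$, whence $\pi_{o(e)}(\alpha_e(a))=\pi_{t(e)}(\omega_e(a))$, which is the desired compatibility.

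The conceptual obstacle here would be that a naïve prescription (say, sending a chosen generator of each cyclic group $G_v$ to $m/n_v\in\Z_m$) depends on arbitrary choices of generators, with no reason for the boundary injections to match. The point is that the rotation number provides a \emph{canonical} injective homomorphism on each $G_v$, and its conjugation-invariance is precisely the property needed to reconcile $\alpha_e$ and $\omega_e$, which differ exactly by an inner automorphism of $G$.
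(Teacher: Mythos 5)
Your proposal is correct and follows essentially the same route as the paper: define $\pi_v$ via the rotation number, observe that on a finite (hence cyclic, conjugate to a rotation group) subgroup of $\Homeo_+(\T)$ it is an injective homomorphism with image $\tfrac{1}{|G_v|}\Z/\Z\subset\tfrac1m\Z/\Z$, and use the conjugacy-invariance of $\rot$ together with the relation $e^{-1}\alpha_e(a)e=\omega_e(a)$ to get compatibility along edges. Your added justification that $\rot$ is actually a homomorphism on each $G_v$ (via Hölder) is a detail the paper leaves implicit, but the argument is the same.
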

\begin{proof}[Proof of Claim]
	Here we consider $\Z_m=(\tfrac1m\Z)/\Z=\{0,\tfrac1m,\ldots,\tfrac{m-1}{m}\}$. For any $v$, the rotation number $\rot:G_v\to \R/\Z$ has image $\{0,\frac{1}{|G_v|},\ldots,\frac{|G_v|-1}{|G_v|}\}$.
	As $|G_v|$ divides $m$, the map $\pi_v(g)=\rot(g)$ defined on $G_v$ takes values in $\Z_m$. The relations $e^{-1}\alpha_e(g)e=\omega_e(g)$ in the presentation \eqref{eq:pres_fundamental_group} say that the restrictions $\pi_{o(e)}\restriction_{\alpha_e(A_{e})}$ and $\pi_{t(e)}\restriction_{\omega_e(A_{e})}$ coincide, as the rotation number is invariant under conjugacy.
\end{proof}  
From the presentation \eqref{eq:pres_fundamental_group} of the fundamental group of a graph of groups, we get that the homomorphisms $\pi_v$ from the Claim extend to a morphism $\pi:G\to \Z_m$, defined on generators as follows:
\[\left \{\begin{array}{lr}
\pi(g)=\pi_v(g) & \text{if }g\in G_v,\\
\pi(e)=0 & \text{if }e\in E.
\end{array}\right.
\]
The kernel $H$ of this morphism has trivial intersection with every vertex group, so the kernel acts freely on the Bass--Serre tree of $G$ and hence is free.
\end{proof}

\begin{rem}
	The proof may suggest that the morphism $\pi:G\to\Z_m$ is given by the rotation number. This is not true, because the free subgroup $H$ is the kernel of $\pi$ and for a general group $G\subset \Homeo_+(\T)$ the rotation numbers of generators of $H$ can be arbitrary (think of the case $G=H$). However, one may want to interpret $\pi$ as a rotation number \emph{relative to $H$}.
\end{rem}

As in \cite[Proposition 3.22]{DW} (inspired by \cite[Proposition 2.4]{DG}), we have the following:

\begin{prop}\label{cor:perm}
Let $G$ be a free-by-finite cyclic group
\[
1\to H\to G\to \Z_m\to 0,
\]
then $G$ is the subgroup of a semi-direct product $F(S_0)\rtimes \Z_m$, where $F(S_0)$ denotes the free group over a finite set $S_0$ and $\Z_m$ acts by permutations of $S_0$. 
\end{prop}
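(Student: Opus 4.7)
The plan is to construct an explicit embedding $\Phi : G \hookrightarrow F(S_0) \rtimes \Z_m$ by promoting the natural action of $G$ on a Bass--Serre tree. First I would invoke Theorem~\ref{t:virt_free} to obtain a proper cocompact action of $G$ (finitely generated, since $H$ must be of finite rank for the conclusion to be meaningful) on a locally finite tree $\tilde X$. Because $H$ is torsion-free while vertex stabilizers in $\tilde X$ are finite, $H$ acts freely on $\tilde X$, so the quotient $Y := \tilde X / H$ is a finite graph with $\pi_1(Y, v_0) = H$ carrying an induced action of $\Z_m = G/H$ by graph automorphisms.

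Next I would choose an orientation $S_0$ on the edges of $Y$, set $F := F(S_0)$, and observe that the $\Z_m$-action on $Y$ permutes $S := S_0 \cup S_0^{-1}$, hence acts by permutations on $F$, giving the desired semidirect product. Identifying $H$ with a subgroup of $F$ via edge-words of loops at $v_0$, I would then lift a generator $\sigma\in\Z_m$ to some $t\in G$, choose an edge path $p_t$ in $Y$ from $v_0$ to $t\cdot v_0$ with edge word $P\in F$, and set
\[
\Phi|_H := \text{the inclusion } H \hookrightarrow F, \qquad \Phi(t) := (P,\sigma).
\]

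To verify that $\Phi$ is a well-defined homomorphism I would use the presentation of $G$ with generators $H \cup \{t\}$ and relations those of $H$ together with $t h t^{-1} = \phi(h)$ (where $\phi = \mathrm{ad}(t)|_H$) and $t^m = w$ (where $w := t^m \in H$). The first family is preserved because the change-of-basepoint formula in $\pi_1(Y)$ reads $\phi(h) = P\,\sigma(h)\,P^{-1}$ in $F$, matching exactly $\Phi(t)\Phi(h)\Phi(t)^{-1} = (P\sigma(h)P^{-1},0)$; the relation $t^m = w$ is preserved because the loop representing $w \in \pi_1(Y,v_0)$ is the concatenation $p_t \cdot t(p_t)\cdots t^{m-1}(p_t)$ (it lifts in $\tilde X$ to the path $\tilde v_0 \to t\tilde v_0 \to \cdots \to t^m \tilde v_0$), whose edge word is $P\cdot\sigma(P)\cdots\sigma^{m-1}(P) = \Phi(t)^m$. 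Injectivity is then immediate: $\Phi(g)=1$ forces the $\Z_m$-coordinate to vanish, so $g\in H$, and on $H$ the map $\Phi$ is the inclusion.

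The main conceptual subtlety is that $H \hookrightarrow F$ is \emph{not} equivariant for the two $\Z_m$-actions when $v_0$ is not fixed by $\Z_m$, since a $\Z_m$-invariant spanning tree of $Y$ (which would produce a $\Z_m$-invariant free basis of $H$) typically does not exist. The element $P$ records exactly this basepoint-change discrepancy, and the flexibility of the semidirect product --- allowing $\Phi(t)$ to be $(P,\sigma)$ rather than $(1,\sigma)$ --- is precisely what makes the embedding possible.
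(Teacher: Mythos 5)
Your proposal is correct and follows essentially the same route as the paper: the paper cites [DW, Prop.\ 3.22] and, in Remark~\ref{rem:perm}, describes exactly your construction --- take the quotient graph $\Gamma=X/H$ of the Bass--Serre tree by the free normal subgroup $H$ (which acts freely since it is torsion-free and stabilizers are finite), let $\Z_m=G/H$ act on $\Gamma$ by graph automorphisms, and take $F(S_0)$ to be the free group on the oriented edges with $\Z_m$ permuting $S=S_0\cup S_0^{-1}$. Your explicit formula $\Phi(t)=(P,\sigma)$ with the change-of-basepoint word $P$, together with the verification on the presentation $\langle H,t\mid \rel(H),\,tht^{-1}=\phi(h),\,t^m=w\rangle$, correctly supplies the details the paper leaves to the references.
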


\begin{rem}\label{rem:perm}
Actually  Proposition~\ref{cor:perm} is obtained by a slightly stronger statement: if one denotes by $\Gamma$ the quotient graph of the action of the finite-index, normal free subgroup $H\subset G$ on the Bass--Serre tree of $G$, then the quotient group $G/H\cong \Z_m$ acts by graph automorphisms of $\Gamma$. Let $S$ be the set of edges of the quotient graph $\Gamma$, and consider the free group
\[F:=\langle S \mid \overline s=s^{-1}\quad\text{for every }s\in S\rangle,\]
where $\overline s$ denotes the edge $s\in S$ with reversed orientation. 
As the action of $G$ on the Bass--Serre tree is without edge-inversion, there exists an orientation $S_0$ of the edges (that is, $S_0$ defines a partition $S_0\sqcup\overline S_0= S$) which is preserved by the action of the quotient $\Z_m=G/H$.
\end{rem}

\begin{ex}
In the case of $G=\PSL(2,\Z)\cong \Z_2*\Z_3$ one gets that $G$ is a subgroup of $F_6\rtimes \Z_6$. Denoting by $s_1,\ldots,s_6$ the generators of $F_6$, a generator of $\Z_6$ acts as a cyclic permutation of $\{s_1,\ldots,s_6\}$. (See~\cite[Example 3.25]{DW}.)
\end{ex}

It seems unclear whether, for given $G\subset \Homeo_+(\T)$ one can take as semi-direct product $F\rtimes \Z_m$ a subgroup of $\Homeo_+(\T)$ containing $G$. However we have the following:

\begin{thm}\label{t:RealSemiDirect}
Let $G=F(S_0)\rtimes \Z_m$ be a semi-direct product, where $\Z_m$ acts on the set of generators $S_0$ by permutations.
Then there exists a locally discrete group in $\Diff_+^\omega(\T)$ isomorphic to $G$, acting with an invariant Cantor set.
\end{thm}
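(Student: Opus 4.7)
The plan is to build the representation by an $\langle\rho\rangle$-equivariant Schottky ping-pong on $\T$, where $\rho\in\Diff^\omega_+(\T)$ is the rigid rotation of order $m$ realising the $\Z_m$-factor and inducing the prescribed permutation $\sigma$ of $S=S_0\cup S_0^{-1}$. I aim to produce a pairwise disjoint family $\{I_s\}_{s\in S}$ of open subsets of $\T$ (each a finite union of intervals) with $\rho(I_s)=I_{\sigma(s)}$, and real-analytic diffeomorphisms $\{f_s\}_{s\in S}$ satisfying for each $s\in S$: (i) $f_s(\T\setminus I_{s^{-1}})\subset I_s$; (ii) $f_{s^{-1}}=f_s^{-1}$; (iii) $\rho f_s\rho^{-1}=f_{\sigma(s)}$. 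Once such data are available, the classical ping-pong lemma yields that $\langle f_s : s\in S_0\rangle$ is free of rank $|S_0|$ with an invariant Cantor set $\Lambda\subset\bigcup_s I_s$, and (iii) together with the unique normal form $g=w\rho^k$ identifies $\langle f_s,\rho\rangle\cong F(S_0)\rtimes\Z_m=G$; the set $\Lambda$ is $G$-invariant and exceptional, and local discreteness will follow from Rebelo's theorem \cite{Rebelo}.

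The construction proceeds one $\langle\rho\rangle$-orbit of $S_0$ at a time. Let $O$ be such an orbit of size $d$, with representative $s\in O$; its stabilizer in $\langle\rho\rangle$ is $\langle\rho^d\rangle$. Since any non-trivial rotation fails to preserve a proper interval, (iii) forces $I_s$ to be $\rho^d$-invariant, hence a disjoint union of $m/d$ intervals. I therefore descend to the quotient circle $\T_O:=\T/\langle\rho^d\rangle$, where the image of $I_s$ is a single short interval and the residual action of $\langle\rho\rangle/\langle\rho^d\rangle\cong\Z_d$ is a rotation of order $d$. There one can freely build a ping-pong diffeomorphism, which then lifts to a $\rho^d$-equivariant diffeomorphism $f_s$ on $\T$; the formulas $f_{\sigma^j(s)}:=\rho^j f_s\rho^{-j}$ and $f_{s^{-1}}:=f_s^{-1}$ consistently extend $f_s$ to the whole orbit $O\cup O^{-1}$, and disjointness of interval families for distinct orbits is guaranteed by taking all of them sufficiently small at the quotient level.

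The genuine subtlety arises when the $\langle\rho\rangle$-orbit of $s$ coincides with that of $s^{-1}$, i.e., $\sigma^{d/2}(s)=s^{-1}$ for some (necessarily even) $d$: then (ii) and (iii) jointly impose the additional anti-symmetry $\rho^{d/2}f_s\rho^{-d/2}=f_s^{-1}$. On $\T_O$, the image $\tau$ of $\rho^{d/2}$ is an order-two rotation interchanging the images of $I_s$ and $I_{s^{-1}}$, and I take $\tilde f_s$ to be a hyperbolic M\"obius transformation of $\T_O\cong\partial\mathbb{H}^2$ whose attracting and repelling fixed points are $\tau$-symmetric: conformality of $\tau$ then makes $\tau\tilde f_s\tau^{-1}$ hyperbolic with swapped fixed points and preserved dilation, so it coincides automatically with $\tilde f_s^{-1}$. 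A careful choice of lift to $\T$, dictated by the relation $(\rho^{d/2}f_s)^2=\rho^d$ that must hold in $G$, transports this identity to $\T$, and taking the M\"obius dilation large enough ensures the Schottky condition.

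The main obstacle is precisely this anti-symmetric compatibility in the ``type B'' orbits: it is not implied by ping-pong alone, and it is the conformal structure of $\PSL(2,\R)$ on $\partial\mathbb{H}^2$ that makes it automatic. Once all $f_s$ are in place, verification of (i)-(iii), the classical ping-pong construction of the invariant Cantor set, and Rebelo's theorem for local discreteness together conclude the proof.
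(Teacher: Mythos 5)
Your proposal is correct and follows essentially the same route as the paper: both realize $\Z_m$ as the rigid rotation of order $m$ and construct the free generators cycle-by-cycle as $\rho$-equivariant ping-pong maps with finitely many hyperbolic fixed points, treating separately the cycles along which a power of the rotation conjugates a generator to its inverse, and then concluding via ping-pong and Rebelo's theorem. The only implementational difference is that the paper builds the equivariant North--South maps directly on $\T$ (placing $2m/k$ fixed points in rotated copies of two sub-intervals and simply requiring $R_{j/m}fR_{j/m}^{-1}=f^{-1}$ in the inverting case), whereas you descend to the quotient circle $\T/\langle\rho^d\rangle$ and use a $\tau$-antisymmetric hyperbolic M\"obius map there---which makes the antisymmetry transparent, though your ``careful choice of lift'' deserves one more line (e.g.\ lift the generating vector field, which is automatically deck-invariant and anti-invariant under $\rho^{d/2}$, so that $\rho^{d/2}f_s\rho^{-d/2}=f_s^{-1}$ holds on the nose rather than up to a power of $\rho^{d}$).
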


\begin{proof}
Denote by $\sigma$ the permutation of $S_0$ defined by a generator of $\Z_m$ and let $\sigma=\gamma_1\cdots \gamma_r$ the cycle decomposition of $\sigma$.
In the following, given $\vartheta\in \T=\R/\Z$, we denote by $R_\vartheta:\T\to \T$ the rotation by $\vartheta$.
Take a partition of $\T$ into $m$ cyclically ordered intervals $I_1,\ldots,I_m$ of equal length. Let $k$ be the length of the cycle $\gamma$.
Choose a generator $f\in S_0$ inside this cycle. Realize $f$ as a diffeomorphism with finitely many fixed points, all hyperbolic, in the following way:
start with an interval, say $I_1$, and choose two disjoint sub-intervals $I_1^-,I_1^+\subset I_1$ (they depend on $\gamma$, but we avoid writing explicitly the dependence); for $i\in \{1,\ldots,m/k\}$ denote by $I_{1+k\,i}^\pm$ the images $R_{i(k/m)}(I_1^{\pm})$, which are contained in $I_{1+k\,i}=R_{i(k/m)}(I_1)$; declare that $f$ has exactly $2m/k$ fixed points, one in each interval $I_{1+k\,i}^\pm$, with those in $I_{1+k\,i}^+$ which are attracting, whereas those in $I_{1+k\,i}^-$ are repelling; we can choose such an $f$ such that 
\begin{equation}\label{eq:f_ping-pong}
f\left (\T\setminus \bigcup_{i=1}^{m/k} I_{1+k\,i}^- \right )=\bigcup_{i=1}^{m/k} I_{1+k\,i}^+,
\end{equation}
and such that the rotation by $k/m$ commutes with $f$.
Next, define $\sigma(f):=R_{1/m}fR_{1/m}^{-1}$: informally speaking, one has to copy what $f$ does to the (right) adjacent intervals of the partition.
Repeat this procedure up to $\sigma^{k-1}(f)$.
%If the cycle does not preserve $S_0$, we do slightly the same thing. Let $1\le j< k=|\gamma|$ be such that $\sigma^jf\sigma^{-j}=f^{-1}$ in $G$ (and thus $\sigma^{k-j}f^{-1}\sigma^{j-k}=f$); consider a sub-interval $I_1^+\subset I_1$ as before, but then define instead $I_1^-=R_{j/m}(I_1^+)$, which is contained in $I_{1+j}$, and choose $f$ such that \eqref{eq:f_ping-pong} holds, and moreover $R_{j/m}fR_{j/m}^{-1}=f^{-1}$. 
Then, realize all the cycles together so that the intervals $I_{1}^\pm$, for different cycles, are pairwise disjoint. (See Figure~\ref{f:RealSemi} for an illustration of the construction in the case of one cycle.)
\end{proof}

\begin{figure}[t]
\[
\includegraphics[scale=1]{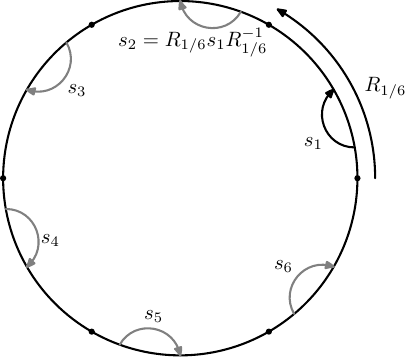}
\]
\caption{Realizing an action of $F_6\rtimes \Z_6$, where a generator of $\Z_6$ acts as a cyclic permutation of the generators of $F_6$.}\label{f:RealSemi}
\end{figure}

\begin{cor}\label{cor:VirtFreeCantor}
Any finitely generated free-by-finite cyclic group which is not virtually cyclic is isomorphic to a locally discrete group in $\Diff_+^\omega(\T)$ acting with a minimal invariant Cantor set.
\end{cor}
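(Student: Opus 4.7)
The plan is to combine Proposition~\ref{cor:perm} with Theorem~\ref{t:RealSemiDirect}, and then verify that the induced action of the subgroup has the desired dynamical features.

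First, given a finitely generated free-by-finite cyclic group $G$ fitting in $1\to H\to G\to \Z_m\to 0$, I would apply Proposition~\ref{cor:perm} to embed $G$ into a semi-direct product $\widetilde G:=F(S_0)\rtimes \Z_m$, where $\Z_m$ acts on $S=S_0\cup S_0^{-1}$ by permutations (via the action on the quotient of the Bass-Serre tree of $G$ by $H$, as explained in Remark~\ref{rem:perm}). Then I would invoke Theorem~\ref{t:RealSemiDirect} to realize $\widetilde G$ as a locally discrete subgroup $\widetilde G\subset \Diff_+^\omega(\T)$ with invariant Cantor set $\Lambda\subset \T$. Composing the embedding $G\hookrightarrow \widetilde G$ with this realization yields a faithful representation of $G$ into $\Diff_+^\omega(\T)$.

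Next I would verify the two properties: local discreteness and the existence of a minimal invariant Cantor set. Local discreteness is inherited when passing to subgroups: if $I\subset \T$ is an interval meeting the minimal set of $G$ and $\{g_n|_I\}$ is a sequence in $G|_I\subset \widetilde G|_I$ converging to the identity in the $C^1$-topology, then local discreteness of the ambient group $\widetilde G$ forces $g_n|_I=id$ for all large $n$, so $G$ is locally discrete. For the minimal set, observe that $\Lambda$ is $G$-invariant (as $G\subset \widetilde G$), so every $G$-minimal set $\Lambda_G$ inside $\Lambda$ is a closed subset of $\T$ with empty interior. By general theory, $\Lambda_G$ is either a finite orbit or a Cantor set. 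The hypothesis that $G$ is \emph{not} virtually cyclic rules out the first case: by the Hector's lemma recalled in the introduction, a locally discrete subgroup of $\Diff_+^\omega(\T)$ with a finite orbit is virtually cyclic. Hence $\Lambda_G$ is a Cantor set, and the corollary follows.

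The construction proper is already done in Theorem~\ref{t:RealSemiDirect}, so no substantive new work is needed—the corollary is a bookkeeping consequence of the two previous results, together with the observation that local discreteness is hereditary under subgroups and the dichotomy provided by Hector's lemma. The only point requiring some care, and which I would consider the main obstacle, is ensuring that no degeneration occurs when restricting the action from $\widetilde G$ to the (possibly much smaller) subgroup $G$: one must exclude finite orbits, which is precisely where the assumption that $G$ is not virtually cyclic is used in a non-trivial way.
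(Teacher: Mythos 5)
Your proposal is correct and follows essentially the same route as the paper: embed $G$ into $F\rtimes\Z_m$ via Proposition~\ref{cor:perm}, realize the latter via Theorem~\ref{t:RealSemiDirect}, note that local discreteness passes to subgroups, and invoke Hector's lemma to exclude finite orbits and conclude that the minimal set is a Cantor set. The extra detail you give on the hereditarity of local discreteness and on the dichotomy for the minimal set is exactly what the paper leaves implicit.
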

\begin{proof}
	Let $G$ be a free-by-cyclic group which is not virtually cyclic. Using Proposition \ref{cor:perm}, $G$ is a subgroup of a semi-direct product $F\rtimes \Z_m$, where $F$ is a finite-rank free group and $\Z_m$ acts by permutations of free generators. By Theorem~\ref{t:RealSemiDirect}, $F\rtimes \Z_m$ is isomorphic to a locally discrete subgroup $\widetilde{G}$ of $\Diff_+^\omega(\T)$ with invariant Cantor set $\widetilde\Lambda$. As $G$ is a subgroup of $F\rtimes \Z_m$, it is isomorphic to a subgroup of $\widetilde{G}$ and hence locally discrete in $\Diff_+^\omega(\T)$. Moreover, as $\widetilde{G}$ does not act minimally, neither can $G$. Finally, as $G$ is not virtually cyclic, Hector's lemma (mentioned in the introduction) implies that $G$ cannot act with a finite orbit, and therefore it also has an invariant Cantor set $\Lambda\subset \widetilde\Lambda$.
\end{proof}

\begin{proof}[Proof of Theorem~\ref{t:RealVirtFree}]
	Direct consequence of Theorem~\ref{t:StructureVirtFree} and Corollary \ref{cor:VirtFreeCantor}.
\end{proof}

\begin{rem}\label{rem:particular_cases}
	Following the arguments appearing in this section for the particular cases of amalgamated products and HNN extensions, one finds the following.
	\begin{itemize}
		\item Let $G\subset \Homeo_+(\T)$ be a virtually free group which decomposes as an amalgam of finite cyclic groups. Then $G\cong \Z_{km}*_{\Z_{k}}\Z_{k\ell}$, for some $k,\ell,m\in\N$, so that $G$ is the central extension
		\[
		0\to\Z_k\to G\to \Z_m*\Z_\ell\to 1.
		\]
		\item Let $G\subset \Homeo_+(\T)$ be a virtually free group which decomposes as an HNN extension of a finite cyclic group $\Z_{m}$. Then there exists an integer $k$ diving $m$ such that $G$ has center isomorphic to $\Z_k$ and the quotient $G/\Z_k$ is isomorphic to the free product $\Z*\Z_{m/k}$, so that  $G$ is the central extension
		\[0\to \Z_k \to G\to \Z*\Z_{m/k}\to 1.\]
	\end{itemize}
\end{rem}

\section{Ping-pong}\label{ping-pong}

\subsection{The statement}

Here we discuss the extension of the classical Klein's ping-pong lemma to graph of groups. Although this is analogous to working with normal forms, we were not able to locate a place in the literature where it is stated in this dynamical form: from the automatic structure of a virtually free group, it is not surprising that such a statement exists, but exhibiting the minimal list of conditions is far from obvious, and probably previously undone. For the particular cases of amalgamated products and HNN extensions, this is originally due to Fenchel and Nielsen (the now-published notes \cite{Fenchel-Nielsen} were circulating in the 1950s) and it appears in detail in the work of Maskit \cite[Chapter VII]{Maskit}. Observe that for general graphs of groups, our  ping-pong lemma is difficult to use, as one needs efficient ways of solving the membership problem for $\alpha_e(A_e)\subset G_{o(e)}$ (cf.~\cite{kapovich2005foldings}).

The reader who is not familiar with Bass--Serre theory could find helpful to read first \S\S\ref{sc:arboreal_part} and \ref{sc:arboreal_ping-pong}. In particular, the conditions listed in the following fundamental definition are very natural when considering the action on a Bass--Serre tree.

\begin{dfn}\label{d:generalized_ping-pong}
Consider a graph $\overline X=(V,E)$ and let $G=\pi_1(\overline X;G_v,A_e)$ be the fundamental group of a graph of groups.
Choose a spanning tree $T=(V,E_T)\subset \overline X$, and let $S=E\setminus E_T$ be the collection of oriented edges not in $T$. We denote by $\mathsf{St}_T(v)=\{e\in E_T\mid o(e)=v\}$ the \emph{star} of $v$ in $T$;  given $e\in E_T$ and $v\in V$, we also write $v\in C(e,T)$ if the edge $e$ belongs to the oriented geodesic path in $T$ connecting $o(e)$ to $v$.

Given an action of the group $G$ on a set $\Omega$, a family of subsets $\boxminus=\{X_v,Z_s\}_{v\in V,s\in S}$ is called an \emph{interactive family}\footnote{We use the symbol $\boxminus$ for it as it reminds a ping-pong table. Also, we use the terminology of interactive family, in analogy with \emph{interactive pair} and \emph{interactive triple} appearing in Maskit \cite{Maskit}. In this work we reserve the word \emph{ping-pong} for partitions of the circle.} if:
\begin{enumerate}[(\text{IF} 1)]

\item the subsets $X_v$, $Z_s$ (for $v\in V$ and $s\in S$) are pairwise disjoint sets; the subsets $Z_s$ are non-empty, and if $G_v\neq \alpha_e(A_e)$ for some edge $e\in E$ such that $o(e)=v$, then $X_v\neq \emptyset$;
\label{pp1}

\item for every edge $s\in S$ and element $O\in\boxminus\setminus \{Z_{\overline s}\}$, one has $s(O)\subset Z_s$; \label{pp2}

\item  the inclusion $\alpha_s(A_s)(Z_s)\subset Z_s$ holds for every $s\in S$;\label{pp3}

\item the inclusion $(G_{o(s)}\setminus \alpha_s(A_s))(Z_s)\subset X_{o(s)}$ holds for every $s\in S$;\label{pp4}

\item for $v\in V$ and $e\in \mathsf{St}_T(v)$ such that $G_v\neq \alpha_e(A_e)$, the corresponding $X_v$ contains a non-empty subset $X_v^e$;\label{pp5}

\item the inclusion $\alpha_e(A_{e})(X_{o(e)}^e)\subset X^e_{o(e)}$ holds for every $e\in E_T$; \label{pp6}

\item if $e\in E_T$ and $v\in V$ are such that $v\in C(e,T)$, then $\left (G_{o(e)}\setminus \alpha_e(A_e)\right )(X_{v})\subset X^e_{o(e)}$ (in particular this holds for $v=t(e)$);\label{pp7} 

\item if $e\in E_T$ and $s\in S$ are such that $o(s)\in C(e,T)$, then $\left (G_{o(e)}\setminus \alpha_e(A_e)\right )(Z_s)\subset X^e_{o(e)}$.\label{pp8}

\end{enumerate}

In addition, one says that the interactive family is \emph{proper} if the following holds:

\begin{enumerate}[(\text{IF} 1)]
\setcounter{enumi}{8}

\item for $s\in S$, the restriction of the action of $\alpha_s(A_s)$ to $Z_s$ is faithful, and similarly, for $e\in E_T$,  the restriction of the action of $\alpha_e(A_e)$ to $X_{o(e)}^e$ is faithful;\label{pp9}

\item if there exists a vertex $v\in V$ such that $X_v\neq\emptyset$, then there exists a (possibly different) vertex $w\in V$ such that the union of all the images from (IF~\ref{pp4},\ref{pp7},\ref{pp8}) inside the corresponding $X_{w}$ misses a point;
\label{pp10}

\item if $S=\{s,\overline s\}$ and $X_v=\emptyset$ for every $v\in V$, then we require that there exists a point $p\in \Omega\setminus(Z_s\cup Z_{\overline{s}})$ such that $s(p)\in Z_s$ and $\overline s(p)\in Z_{\overline s}$.\label{pp11}
\end{enumerate}
\end{dfn}

\begin{rem}\label{pipipi}
	Condition (IF~\ref{pp11}) is an \textit{ad hoc} condition introduced to cover the degenerate case $G\cong G_0\rtimes \Z$ (for some $G_0$). This case never occurs in our applications to dynamics of virtually free groups on the circle, as such semi-direct product gives a virtually cyclic group. We will use this condition explicitly only in Lemma \ref{l:pp_HNN}.
\end{rem}

\begin{rem}
	Observe that conditions (IF~\ref{pp3},\ref{pp6}) actually imply (whence they are equivalent to) the corresponding equalities $\alpha_s(A_s)(Z_s)=Z_s$ and $\alpha_e(A_{e})(X_{o(e)}^e)= X^e_{o(e)}$.
\end{rem}

The next two subsections are devoted to the proof of the following:

\begin{thm}
	[Generalized ping-pong lemma]
	\label{p:faithful}
	Consider a graph $\overline X=(V,E)$ and let $G=\pi_1(\overline X;G_v,A_e)$ be the fundamental group of a graph of groups.
	Choose a maximal subtree $T=(V,E_T)\subset \overline X$, and let $S=E\setminus E_T$ be the collection of oriented edges not in $T$.
	
	Then any action of $G$ on a set $\Omega$, which admits a proper interactive family for $(G_v,A_{e})$, is faithful.
\end{thm}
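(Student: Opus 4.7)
My plan is a classical Klein-type ping-pong argument adapted to Bass-Serre normal forms. Using the presentation~\eqref{eq:pres_fundamental_group}, with edges of $E_T$ trivialized in $G$, any non-identity $g \in G$ admits a reduced normal form
\[
g = h_n \, s_n \, h_{n-1} \, s_{n-1} \cdots s_1 \, h_0,
\]
where $s_i \in S$, the vertices $v_0, \ldots, v_n$ trace a closed loop in $\overline X$ induced by $s_1, \ldots, s_n$ (with $T$-geodesics connecting successive endpoints), and $h_i \in G_{v_i}$. The reduction rule forbids pinches: whenever $s_{i+1} = \overline{s_i}$, one has $h_i \notin \alpha_{s_i}(A_{s_i})$. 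I would argue by induction on the $S$-length $n$; the base case $n = 0$ corresponds to $g \in G_{v_0} \setminus \{e\}$ and is handled using (IF~\ref{pp9}) together with the expulsion conditions (IF~\ref{pp4}), (IF~\ref{pp7}).

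The inductive heart is a tracking lemma: for a point $p$ lying in any set of $\boxminus$ other than $Z_{\overline{s_1}}$, each intermediate iterate sits in a specific set of $\boxminus$, and in particular $g(p) \in Z_{s_n}$ when $h_n \in \alpha_{s_n}(A_{s_n})$, and $g(p) \in X_{v_n}$ otherwise. The tracking reads $g$ right-to-left: a vertex-group letter $h_i$ keeps the iterate in $Z_{s_i}$ via (IF~\ref{pp3}) if $h_i$ lies in the edge subgroup, or pushes it into $X_{v_i}$ via (IF~\ref{pp4}); the subsequent edge letter $s_{i+1}$ then pushes the result into $Z_{s_{i+1}}$ via (IF~\ref{pp2}). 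The reduction condition is precisely what keeps (IF~\ref{pp2}) applicable: if $s_{i+1} = \overline{s_i}$, then $h_i$ outside $\alpha_{s_i}(A_{s_i})$ forces the iterate out of $Z_{s_i} = Z_{\overline{s_{i+1}}}$ into $X_{v_i}$ before $s_{i+1}$ acts. Transits through $T$-geodesics between consecutive edge letters (implicit in the $v_i$-labeling and the vertex-group elements in between) are controlled by the refined subsets $X_v^e$ together with conditions (IF~\ref{pp5})--(IF~\ref{pp8}).

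To extract a moved point, invoke (IF~\ref{pp10}) to select $p_* \in X_w$ outside the union of all incoming $(G_{o(s)} \setminus \alpha_s(A_s))(Z_s)$-type images (from (IF~\ref{pp4})) and their $X$-analogs (from (IF~\ref{pp7}), (IF~\ref{pp8})). Up to conjugating $g$ so that its normal-form loop is based at $w$ (a conjugation does not affect the faithfulness statement), the tracking places $g(p_*) \in Z_{s_n}$, disjoint from $X_w$ by (IF~\ref{pp1}), or $g(p_*) \in X_w$ as precisely one of the incoming images $p_*$ was chosen to avoid; in either case $g(p_*) \neq p_*$. The base case $g \in G_{v_0} \setminus \{e\}$ splits by edge-group membership: if $g \in \alpha_e(A_e)$ for some incident $e$, (IF~\ref{pp9}) gives a moved point in $Z_e$ or $X_{o(e)}^e$; otherwise $g$ belongs to no incident edge group, and (IF~\ref{pp4}) or (IF~\ref{pp7}) sends some $Z_s$ or $X_v$ into the disjoint region $X_{v_0}$ or $X_{v_0}^e$. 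Finally, the degenerate case $G \cong G_0 \rtimes \Z$ where every $X_v$ is empty is dispatched by the auxiliary point from (IF~\ref{pp11}). The main obstacle I foresee is the combinatorial bookkeeping around the $X_v^e$ subsets: since (IF~\ref{pp7}), (IF~\ref{pp8}) are conditioned on the geometric relation $v \in C(e, T)$, one must verify at each tracking step that the current iterate's ``location'' in the spanning tree respects these hypotheses — this is precisely where the spanning tree $T$ plays a non-trivial geometric role, and where the proof becomes most combinatorial.
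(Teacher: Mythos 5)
Your overall strategy---one global tracking argument through a Bass--Serre normal form---is a legitimate alternative in principle, but as written it has a genuine gap, concentrated in two places. First, the base case of your induction on $S$-length is mis-identified. The elements of $S$-length $0$ are not the elements of a single vertex group $G_{v_0}$: they form the subgroup generated by \emph{all} the vertex groups, i.e.\ the iterated amalgamated product $\pi_1(T;G_v,A_e)$ over the spanning tree. (When $\overline X=T$ is a single edge, $S=\emptyset$, so \emph{every} element has $S$-length $0$ and the entire content of the theorem, namely Lemma~\ref{l:pp_amalgam}, sits inside your ``base case''.) A reduced word $g_kh_k\cdots g_1h_1$ with $g_i\in G_1\setminus A$, $h_i\in G_2\setminus A$ already requires the full tracking through (IF~\ref{pp6})--(IF~\ref{pp8}) and a careful choice of the unmoved point via (IF~\ref{pp10}); it cannot be dispatched by (IF~\ref{pp9}) plus expulsion. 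Relatedly, your normal form $h_ns_n\cdots s_1h_0$ with a \emph{single} $h_i\in G_{v_i}$ between consecutive stable letters is not a normal form for a general graph of groups: between $s_i$ and $s_{i+1}$ one has a product of elements of the several vertex groups along the $T$-geodesic from $t(s_i)$ to $o(s_{i+1})$, which cannot be collapsed into one $G_{v_i}$. Second, the step you explicitly defer as ``combinatorial bookkeeping''---checking at each stage that the iterate's location is compatible with the hypotheses $v\in C(e,T)$ of (IF~\ref{pp7}), (IF~\ref{pp8})---is not bookkeeping but the mathematical core of the argument, and it interacts with the choice of the missed point: (IF~\ref{pp10}) only provides one at a single vertex $w$, while a cyclically reduced conjugate of $g$ based at $w$ may begin and end with letters that a priori fix that point.

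For contrast, the paper sidesteps the global tracking: it proves the two Maskit base cases (amalgamated products, Lemma~\ref{l:pp_amalgam}; generalized HNN extensions, Lemma~\ref{l:pp_multipleHNN}) by exactly the local version of your tracking, and then inducts on the number of edges of the spanning tree by \emph{contracting} an edge $e_0\in E_T$, setting $X_0=X_1\cup X_2$ and assembling new sets $X_0^e$ from auxiliary sets $Y_k,Y_k'$ so that the contracted graph of groups again carries a proper interactive family. All the $C(e,T)$ verifications are thereby absorbed one edge at a time. To complete your one-shot approach you would have to carry out globally precisely the verification your proposal leaves open; correcting the base case alone already forces you back to the amalgamated-product analysis.
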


\subsection{Basic cases}\label{sc:basic}

First we discuss the two particular cases appearing in \cite{Maskit}, whose proofs basically rely on the normal forms for elements in amalgamated products and HNN extensions.
%VII.A.10 amalgams
%VII.D.12 HNN extensions

\begin{lem}[Ping-pong for amalgamated products; see \S VII.A.10 of \cite{Maskit}]
	\label{l:pp_amalgam}
	Assume that the graph $\overline X$ consists of a single edge $e$ between two distinct vertices $o(e)$ and $t(e)$. Let $G=\pi_1(\overline X;G_v,A_e)\cong G_{o(e)}*_{A_e}G_{t(e)}$ be the fundamental group of a graph of groups.
	
	Then any action of $G$ on a set $\Omega$, which admits a proper interactive family, is faithful.
\end{lem}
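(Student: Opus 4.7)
The plan is to prove faithfulness by combining the classical amalgamated-product normal form with the ping-pong inclusions supplied by the interactive family. Writing $A = \alpha_e(A_e) = \omega_e(A_e)$, so that $G \cong G_{o(e)} *_A G_{t(e)}$, I would invoke the normal form theorem for amalgamated free products to write any non-trivial $g \in G$ as a reduced expression $g = g_1 g_2 \cdots g_n$ with $n \geq 1$, where for $n \geq 2$ the letters $g_i$ belong alternately to $G_{o(e)} \setminus A$ and $G_{t(e)} \setminus A$. For each such $g$, the aim is to exhibit an explicit point $p \in \Omega$ with $g(p) \neq p$. The argument then splits according to $n$ and, for $n \geq 2$, according to whether $g_1$ and $g_n$ belong to the same factor.

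The case $n = 1$ should be immediate: if $g \in A \setminus \{1\}$, condition (IF~\ref{pp9}) directly yields a point of $X^e_{o(e)}$ not fixed by $g$; if $g \in G_v \setminus A$ for some $v$, then (IF~\ref{pp1}) gives $X_{v'} \neq \emptyset$ for $v' \neq v$, and (IF~\ref{pp7}) sends any $p \in X_{v'}$ into $X^{e'}_v \subset X_v$, a set disjoint from $X_{v'}$. For $n \geq 2$ with $g_1$ and $g_n$ in the same factor, the plan is to pick $p$ in the vertex set opposite to that factor and apply $g_n, g_{n-1}, \dots, g_1$ in succession, letting (IF~\ref{pp7}) shift the running image alternately into $X^e_{o(e)}$ and $X^{\bar e}_{t(e)}$; matching endpoints then force $g(p)$ to land in a set disjoint from the one containing $p$, so $g(p) \neq p$ at once.

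The main obstacle is the remaining case $n \geq 2$ with $g_1$ and $g_n$ in distinct factors: the iteration brings $g(p)$ back to the same vertex set as $p$, and naive disjointness fails. This is precisely where properness intervenes. I would apply (IF~\ref{pp10}) to obtain a vertex $w \in V$ and a point $q \in X_w$ avoiding every image of the form $(G_{o(e')} \setminus A)(X_v)$ appearing in (IF~\ref{pp7}). When $w$ is the vertex determined by $g_1$, running the iteration on $q$ should locate $g(q)$ inside $g_1\bigl(X^{\bar e}_{t(e)}\bigr) \subset (G_{o(e)} \setminus A)(X_{t(e)})$, a set excluded by the choice of $q$, so $g(q) \neq q$. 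When $w$ is the opposite vertex, the symmetric argument applied to $g^{-1}$ (whose normal form has start and end swapped) produces $g^{-1}(q) \neq q$, forcing $g \neq \mathsf{id}$. Conditions (IF~\ref{pp4}), (IF~\ref{pp8}), (IF~\ref{pp11}) play no role in this single-edge setting since $S = \emptyset$, and the remaining bookkeeping uses only (IF~\ref{pp1}), (IF~\ref{pp6}), and (IF~\ref{pp7}).
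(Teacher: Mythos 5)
Your argument is correct in substance and rests on exactly the same ingredients as the paper's proof of Lemma~\ref{l:pp_amalgam} --- the normal form theorem for amalgamated products, the ping-pong inclusions (IF~\ref{pp7}), faithfulness of the edge group via (IF~\ref{pp9}), and the missed point supplied by (IF~\ref{pp10}) --- but it is organized differently. Writing $G_1=G_{o(e)}$, $G_2=G_{t(e)}$, $A=\alpha_e(A_e)$, the paper first observes that the kernel is normal, conjugates any element not conjugate into $G_1\cup G_2$ to a cyclically reduced word $g_nh_n\cdots g_1h_1$ with $g_i\in G_1\setminus A$, $h_i\in G_2\setminus A$, and then feeds it the single point $x\in X_1$ furnished by (IF~\ref{pp10}); this collapses your three-way case split into one computation. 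You instead keep the given reduced word and compensate by splitting on the factors of the first and last letters, invoking the $g\mapsto g^{-1}$ symmetry when the distinguished vertex from (IF~\ref{pp10}) sits on the wrong side. Both routes are valid, and yours is arguably more explicit since it never appeals to conjugacy invariance of the kernel. One inaccuracy to repair: in your length-one case you assert that (IF~\ref{pp1}) yields $X_{v'}\neq\emptyset$ for the vertex $v'$ opposite to $v$, but (IF~\ref{pp1}) only guarantees $X_v\neq\emptyset$ when $G_v\neq A$; to conclude $X_{v'}\neq\emptyset$ you need $G_{v'}\neq A$, which does not follow from $g\in G_v\setminus A$. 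The paper disposes of the degenerate situation $G_{v'}=A$ (where $G\cong G_v$) at the very start and then assumes $G_1,G_2\neq A$ throughout; with that preliminary reduction added, all the non-emptiness claims in your cases (including the same-factor case, where the presence of letters from both factors forces $G_1,G_2\neq A$ anyway) are justified and your proof goes through.
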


\begin{proof}
	For simplicity, we write $G_1=G_{o(e)}$, $G_2=G_{t(e)}$ and $A=\alpha_e(A_e)=\omega_e(A_e)\subset G_1\cap G_2$.
	In this case we have that $\overline X$ is a tree, so $S=\emptyset$. 
	Observe that if $G_2=A$, then $G\cong G_1$, so that the statement follows trivially from (IF~\ref{pp7},\ref{pp9}). Therefore we can assume $G_1,G_2\neq A$.
	Conditions in Definition~\ref{d:generalized_ping-pong} reduce to the following:
	\begin{enumerate}[(IF 1)]
		\item[(IF \ref{pp1})]$\Omega$ contains two non-empty disjoint subsets $X_1,X_2$;
		\item[(IF \ref{pp5})] there are subsets $X_1^e\subset X_1$, $X_2^{\overline e}\subset X_2$;
		\item[(IF \ref{pp6})] $A(X^{e}_1)\subset X^{e}_1$ and $A(X_2^{\overline e})\subset X^{\overline e}_2$;
		\item[(IF \ref{pp7})] $(G_1\setminus A)(X_2)\subset X_1^e$ and 
		$(G_2\setminus A)(X_1)\subset X_2^{\overline e}$;
		\item [(IF \ref{pp9})] the restrictions of the action of $A$ to $X_{1}^e$ and $X_2^{\overline e}$ respectively are both faithful;
		\item [(IF \ref{pp10})] at least one of the inclusions $(G_1\setminus A)(X_2)\subset X_1$ or $(G_2\setminus A)(X_1)\subset X_2$, is proper.
	\end{enumerate}
	These conditions are close to that of \emph{proper interactive pair} in  \cite{Maskit}, with minor differences. The lines of the proof in \cite{Maskit} work, but we detail for clarity.
	If $g\in G_1\cup G_2$ is not the identity element, then (IF~\ref{pp7},\ref{pp9}) guarantee that $g$ does not act trivially. 
	Without loss of generality, we can assume from (IF \ref{pp10}) that there exists a point $x\in X_1$ which is not in $(G_1\setminus A)(X_2)$.
	If $g\in G$ is not conjugate into $G_1\cup G_2$, then, up to make a conjugacy, we can write $g$ as a product
	$g=g_nh_n\cdots g_1h_1$, with $g_i\in G_1\setminus A$ and $h_i\in G_2\setminus A$ (normal form for amalgamated free products \cite{LScombinatorial}).
	Then using (IF \ref{pp7}) we get
	\[h_1(x)\in X_2^{\overline e},\quad g_1h_1(x)\in X_1^e,\quad\ldots,\quad h_n\cdots g_1h_1(x)\in  X_2^{\overline e}\]
	and finally $g(x)=g_nh_n\cdots g_1h_1(x)\in (G_1\setminus A)(X_2^{\overline e})$. As $x$ is not in  $(G_1\setminus A)(X_2^{\overline e})$, we must have $g(x)\neq x$, and thus $g$ does not act trivially on $\Omega$.
\end{proof}

\begin{lem}[Ping-pong for HNN extensions; see \S VII.D.12 of \cite{Maskit}]
	\label{l:pp_HNN}
	Assume that the graph $\overline X$ consists of a single vertex $0$ with a self-edge $s$. Let $G=\pi_1(\overline X;G_0,A_s)\cong G_{0}*_{A_s}$ be the fundamental group of a graph of groups.
	
	Then any action of $G$ on a set $\Omega$, which admits a proper interactive family, is faithful.
\end{lem}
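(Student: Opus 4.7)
The graph $\overline X$ here has $V=\{0\}$, $E=\{s,\overline s\}$, and the spanning tree is the single vertex, so $S=E$ and $E_T=\emptyset$. Accordingly, conditions (IF~\ref{pp5})--(IF~\ref{pp8}) become vacuous and the interactive family consists of the three pairwise disjoint sets $X_0,Z_s,Z_{\overline s}$, with (IF~\ref{pp2})--(IF~\ref{pp4}) controlling how $s^{\pm1}$ and $G_0$ move them around. The plan is to split the argument according to (a) the syllable length of $g$ in a Britton reduced form, and (b) whether $X_0$ is empty. The second split is the genuinely new feature compared with Lemma~\ref{l:pp_amalgam}, and it is exactly what the ad hoc condition (IF~\ref{pp11}) is designed to handle.

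For $g\in G_0\setminus\{1\}$ the argument is identical to the amalgamated case: if $g\in\alpha_s(A_s)$, condition (IF~\ref{pp9}) directly exhibits a non-fixed point in $Z_s$; if $g\notin\alpha_s(A_s)$, condition (IF~\ref{pp4}) sends the non-empty set $Z_s$ into $X_0$, which is disjoint from $Z_s$. For an element of syllable length $n\geq1$ I would work with the Britton reduced form
\[
g = g_0\,s^{\epsilon_1}g_1\,s^{\epsilon_2}\cdots s^{\epsilon_n}\,g_n,\qquad g_i\in G_0,\ \epsilon_i\in\{\pm1\},
\]
where reducedness means $g_i\notin\alpha_{s^{\epsilon_{i+1}}}(A_s)$ whenever $\epsilon_i=-\epsilon_{i+1}$. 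Suppose first that $X_0\neq\emptyset$, and let $q\in X_0$ be the special point furnished by (IF~\ref{pp10}), missing all images from (IF~\ref{pp4}). The central observation is that $g_n(q)\notin Z_{s^{-\epsilon_n}}$: if $g_n\in\alpha_{s^{-\epsilon_n}}(A_s)$, (IF~\ref{pp3}) applied to $g_n^{-1}$ gives $g_n^{-1}(Z_{s^{-\epsilon_n}})\subset Z_{s^{-\epsilon_n}}$, a set disjoint from $q$; otherwise the defining property of $q$ in (IF~\ref{pp10}) directly yields $q\notin g_n^{-1}(Z_{s^{-\epsilon_n}})$. An induction reading $g$ from right to left then shows that after applying $s^{\epsilon_{n-k}}g_{n-k}\cdots s^{\epsilon_n}g_n$ the orbit sits inside $Z_{s^{\epsilon_{n-k}}}$: just before applying $s^{\epsilon_{n-k}}$ the point lies in $Z_{s^{\epsilon_{n-k+1}}}\cup X_0$, and this avoids $Z_{s^{-\epsilon_{n-k}}}$ either trivially (when $\epsilon_{n-k}=\epsilon_{n-k+1}$) or because reducedness forces $g_{n-k}\notin\alpha_{s^{\epsilon_{n-k+1}}}(A_s)$, so that (IF~\ref{pp4}) pushes it into $X_0$ (when $\epsilon_{n-k}=-\epsilon_{n-k+1}$). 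After the final $s^{\epsilon_1}$ we are in $Z_{s^{\epsilon_1}}$, and applying $g_0$ either keeps us there (so $g(q)\in Z_{s^{\epsilon_1}}$ is disjoint from $q\in X_0$) or sends us into $(G_0\setminus\alpha_{s^{\epsilon_1}}(A_s))(Z_{s^{\epsilon_1}})$, one of the sets (IF~\ref{pp10}) forbids $q$ from occupying. Either way $g(q)\neq q$.

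For the degenerate case $X_0=\emptyset$, (IF~\ref{pp1}) forces $\alpha_s(A_s)=\omega_s(A_s)=G_0$, so every putative subword $s^{-\epsilon}g_is^{\epsilon}$ would be a pinch; reducedness therefore forces all $\epsilon_i$ to share a common sign $\epsilon$. Take the point $p$ given by (IF~\ref{pp11}): $p\notin Z_s\cup Z_{\overline s}$ but $s^{\pm1}(p)\in Z_{s^{\pm1}}$. Since $g_n\in G_0=\alpha_{s^{-\epsilon}}(A_s)$ stabilizes $Z_{s^{-\epsilon}}$ by (IF~\ref{pp3}) applied to $g_n^{-1}$, and $p\notin Z_{s^{-\epsilon}}$, we get $g_n(p)\notin Z_{s^{-\epsilon}}$. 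Applying $s^{\epsilon}$ then lands in $Z_{s^{\epsilon}}$; from there every subsequent $g_i\in G_0=\alpha_{s^{\epsilon}}(A_s)$ keeps us in $Z_{s^{\epsilon}}$ by (IF~\ref{pp3}), while each remaining $s^{\epsilon}$ keeps us there by (IF~\ref{pp2}). Hence $g(p)\in Z_{s^{\epsilon}}$, disjoint from $p$.

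The main obstacle is not any deep step but rather the sign bookkeeping in the induction: recognising that Britton's non-pinch condition ($g_i\notin\alpha_{s^{\epsilon_{i+1}}}(A_s)$ when $\epsilon_i=-\epsilon_{i+1}$) is exactly the hypothesis needed to apply (IF~\ref{pp4}) at each alternation of $s$-exponents, and observing that the special point in (IF~\ref{pp10}) serves simultaneously as starting point and as witness of non-fixation. This last observation is what lets the argument cleanly bypass the cyclic-reduction tricks usually invoked in HNN ping-pong proofs.
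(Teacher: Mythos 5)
Your overall architecture --- Britton reduced forms, a right-to-left orbit computation, and the special points from (IF~\ref{pp10}) and (IF~\ref{pp11}) serving simultaneously as starting point and as witness of non-fixation --- is sound, and your treatment of $g\in G_0\setminus\{id\}$ and of the inductive step at each sign alternation is correct. But the base case of your induction has a genuine gap, in both branches. You apply $g_n\in G_0$ to the point $q\in X_0$ (resp.\ to $p$) \emph{first}, and your ``central observation'' only establishes $g_n(q)\notin Z_{s^{-\epsilon_n}}$. That is not enough to then apply $s^{\epsilon_n}$: condition (IF~\ref{pp2}) controls the image under $s^{\epsilon_n}$ only of the sets belonging to the interactive family, and in the HNN case one has $E_T=\emptyset$, so conditions (IF~\ref{pp5})--(IF~\ref{pp8}) are vacuous and \emph{no} axiom constrains where $G_0$ sends $X_0$ (nor where it sends the point $p$ of (IF~\ref{pp11})). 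The point $g_n(q)$ may perfectly well land in $\Omega\setminus(X_0\cup Z_s\cup Z_{\overline s})$, where the image under $s^{\epsilon_n}$ is completely uncontrolled, and the induction never gets started. The same defect occurs in the degenerate branch: (IF~\ref{pp11}) controls $s^{\pm1}(p)$ for the specific point $p$ only, not for $g_n(p)$.

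The repair is precisely the cyclic-reduction device you claim to bypass. In the case $X_0\neq\emptyset$, replace $g$ by its conjugate $g_n g g_n^{-1}=(g_ng_0)\,s^{\epsilon_1}g_1\cdots g_{n-1}\,s^{\epsilon_n}$, which is still Britton reduced of the same syllable length (the interior syllables and exponents are untouched) and whose rightmost syllable is now $s^{\epsilon_n}$; then $s^{\epsilon_n}(q)\in Z_{s^{\epsilon_n}}$ by (IF~\ref{pp2}) and your induction runs verbatim, ending with $g_ngg_n^{-1}(q)$ in a set that misses $q$ by (IF~\ref{pp10}). In the degenerate case $\alpha_s(A_s)=\omega_s(A_s)=G_0$, the subgroup $G_0$ is normal, so one writes $g=g_0s^n$ and applies $s^n$ to $p$ first --- which is exactly what (IF~\ref{pp11}) is designed for, and exactly how the paper argues this case. (For the non-degenerate case the paper is terser than you are: it observes that $(X_0,Z_s,Z_{\overline s})$ is a proper interactive triple in Maskit's sense and defers to VII.D.12 of \cite{Maskit}; the detailed argument there is the conjugated one just described.)
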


\begin{proof}
	For simplicity, we write $A=A_s$.
	First we rule out the case $\alpha_s(A)=\omega_s(A)=G_0$, for which we introduced condition (IF~\ref{pp11}). Then $G\cong G_0\rtimes \Z$ and every element $g\in G$ is uniquely represented by a product $g_0 s^n$, $g\in G_0$, $n\in\Z$. If $n=0$, then by (IF \ref{pp9}) the element $g=g_0$ cannot act trivially, unless $g=\mathrm{id}$.
	Otherwise, if $n\neq 0$, take the point $p\in \Omega\setminus(Z_s\cup Z_{\overline s})$ given by (IF~\ref{pp11}). Then $s^n (p)\in Z_s\cup Z_{\overline{s}}$ and $g(p)=g_0s^n(p)\in Z_s\cup Z_{\overline{s}}$ as well, for $G_0=\alpha_s(A)=\omega_s(A)$ preserves both $Z_s$ and $Z_{\overline s}$ after (IF \ref{pp3}). Thus $g(p)\neq p$.
	
	Since now we can assume $\alpha_s(A)\neq G_0$.
	The vertex $0$ is the ``spanning tree'' in $\overline X$ and $S=\{s,\overline s\}$. Conditions in Definition~\ref{d:generalized_ping-pong} reduce to:
	\begin{enumerate}[(IF 1)]

		\item[(IF \ref{pp1})] $\Omega$ contains non-empty disjoint subsets $X_0,Z_s,Z_{\overline s}$;

		\item[(IF \ref{pp2})] $s(X_0\cup Z_{s})\subset Z_s$ and $s^{-1}(X_0\cup Z_{\overline s})\subset Z_{\overline s}$;

		\item[(IF \ref{pp3})] $\alpha_s(A)(Z_s)\subset Z_s$ and $\omega_s(A)(Z_{\overline s})\subset Z_{\overline s}$; 

		\item[(IF \ref{pp4})] $(G_0\setminus \alpha_s(A))(Z_s)\subset X_0$ and 
		$(G_0\setminus \omega_s(A))(Z_{\overline s})\subset X_0$;
		
		\item[(IF \ref{pp9})] both the restrictions of the actions of $\alpha_s(A)$ to $Z_s$ and of $\omega_s(A)$ to $Z_{\overline s}$ respectively, are faithful;

		\item[(IF \ref{pp10})] the union of the images 
		$(G_0\setminus \alpha_s(A))(Z_s)\cup
		(G_0\setminus \omega_s(A))(Z_{\overline s})$ misses a point in $X_0$.
	\end{enumerate}
	In the language of \cite{Maskit}, $(X_0,Z_{s},Z_{\overline s})$ is a \emph{proper interactive triple} for $G$, which guarantees that the action on $\Omega$ is faithful. (Note that there is however a minor difference compared to the statement in \cite{Maskit}, as here we require additionally in (IF \ref{pp9}) that edge groups act faithfully; this is because at the end we want to ensure that the action on $\Omega$ is faithful.)
\end{proof}

\begin{lem}[Ping-pong for generalized  HNN extensions]
	\label{l:pp_multipleHNN}
	Assume that the graph $\overline X$ consists of a single vertex $0$ with $n$ self-edges $s_1,\ldots,s_n$. Let $G=\pi_1(\overline X;G_0,A_s)$ be the fundamental group of a graph of groups.
	
	Then any action of $G$ on a set $\Omega$, which admits a proper interactive family, is faithful.
\end{lem}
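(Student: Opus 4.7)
The plan is to adapt the direct normal-form argument from Lemma~\ref{l:pp_HNN} to the case of multiple self-edges. Every element $g\in G$ admits a generalized Britton normal form
\[
g=g_0\,t_1\,g_1\,\cdots\,t_k\,g_k,\qquad t_j\in S,\ g_j\in G_0,
\]
where, setting $B_{s_i}:=\alpha_{s_i}(A_{s_i})$ and $B_{\overline{s_i}}:=\omega_{s_i}(A_{s_i})$, the reducedness condition reads: whenever $t_{j+1}=\overline{t_j}$, one has $g_j\notin B_{t_{j+1}}$. With this notation, (IF~\ref{pp3}) says $B_s(Z_s)\subset Z_s$ and (IF~\ref{pp4}) says $(G_0\setminus B_s)(Z_s)\subset X_0$. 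By Britton's lemma, a reduced word with $k\ge 1$ is non-trivial, so it suffices to show that a non-trivial $g$ in reduced form acts non-trivially on $\Omega$, treating the cases $k=0$ and $k\ge 1$ in turn.

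The case $k=0$, with $g=g_0\in G_0\setminus\{1\}$, is short: if $g_0\in B_s$ for some $s\in S$, then (IF~\ref{pp9}) ensures it acts non-trivially on $Z_s$; otherwise (IF~\ref{pp4}) sends $Z_{s_1}$ inside $X_0$, which is disjoint from $Z_{s_1}$ by (IF~\ref{pp1}), so every point of $Z_{s_1}$ is moved by $g_0$.

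The main case is $k\ge 1$; assume first $X_0\neq\emptyset$ and take the point $p\in X_0$ provided by (IF~\ref{pp10}), which lies outside every image $(G_0\setminus B_s)(Z_s)$, $s\in S$. The key observation is that $g_k(p)\notin Z_s$ for any $s\in S$: indeed, if $g_k(p)\in Z_s$ then $p\in g_k^{-1}(Z_s)$, and either $g_k^{-1}\in B_s$, forcing $p\in Z_s$ and contradicting $p\in X_0$, or $g_k^{-1}\notin B_s$, forcing $p\in(G_0\setminus B_s)(Z_s)$ and contradicting the choice of $p$. In particular $g_k(p)\notin Z_{\overline{t_k}}$, so (IF~\ref{pp2}) gives $t_k\,g_k(p)\in Z_{t_k}$. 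One now proves by downward induction that $t_j\,g_j\cdots t_k\,g_k(p)\in Z_{t_j}$ for $j=k,k-1,\dots,1$: the element $g_{j-1}$ sends $Z_{t_j}$ either into $Z_{t_j}$ (when $g_{j-1}\in B_{t_j}$) or into $X_0$, and in the first sub-case reducedness forbids $t_j=\overline{t_{j-1}}$, so in both sub-cases the image avoids $Z_{\overline{t_{j-1}}}$ and (IF~\ref{pp2}) applies to produce an image in $Z_{t_{j-1}}$. Finally $g(p)=g_0\bigl(t_1\,g_1\cdots t_k\,g_k(p)\bigr)$ sits either in $Z_{t_1}$, disjoint from $X_0\ni p$, or in $(G_0\setminus B_{t_1})(Z_{t_1})$, which is avoided by $p$ thanks to (IF~\ref{pp10}); in either scenario $g(p)\neq p$.

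The degenerate sub-case $X_0=\emptyset$ forces, via (IF~\ref{pp1}), $B_s=G_0$ for every $s\in S$, and reducedness collapses to the no-backtracking condition $t_{j+1}\neq\overline{t_j}$. If $n=1$ this is exactly the situation of Lemma~\ref{l:pp_HNN}, to which one appeals. If $n\ge 2$, then $|S|\ge 4$, and one can choose $s'\in S\setminus\{\overline{t_k},t_1\}$ and start the ping-pong with any $p\in Z_{s'}$: the same chain of inclusions goes through, now using $B_s(Z_s)\subset Z_s$ to keep $g_j$-images in $Z_{t_{j+1}}$ and no-backtracking for the $t_j$-moves, ending with $g(p)\in Z_{t_1}$, which is disjoint from $Z_{s'}\ni p$. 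The heart of the argument, and the main obstacle, is the initial step in the case $k\ge 1$: ruling out that the rightmost vertex-group factor $g_k$ sends the base-point into the forbidden set $Z_{\overline{t_k}}$. This is precisely the control afforded by the ``missed-point'' condition (IF~\ref{pp10}) which promotes an interactive family to a \emph{proper} one.
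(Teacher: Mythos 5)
Your strategy --- a direct ping-pong on the Britton normal form $g=g_0t_1g_1\cdots t_kg_k$ --- is genuinely different from the paper's proof, which instead inducts on $n$ by contracting one self-edge at a time: it first applies Lemma~\ref{l:pp_HNN} to $G_1=\langle G_0,s_1\rangle\cong G_0*_{A_{s_1}}$, then builds a new proper interactive family $\{X_0\cup Z_{s_1}\cup Z_{\overline{s_1}},\,Z_s\}_{s\neq s_1,\overline{s_1}}$ for the contracted graph of groups and concludes by induction. Your route is in principle more economical, and most of it is sound: the case $k=0$, the downward induction along the word, the endgame using (IF~\ref{pp10}), and the degenerate sub-case $X_0=\emptyset$ are all fine.

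However, the very first step of the main case has a genuine gap. You take the point $p\in X_0$ furnished by (IF~\ref{pp10}), apply $g_k\in G_0$, correctly show $g_k(p)\notin Z_s$ for every $s\in S$, and then invoke (IF~\ref{pp2}) to conclude $t_kg_k(p)\in Z_{t_k}$. But (IF~\ref{pp2}) only controls the action of $t_k$ on the \emph{sets of the family}, namely on $X_0$ and on the $Z_{s'}$ with $s'\neq\overline{t_k}$; it says nothing about points of $\Omega\setminus(X_0\cup\bigcup_{s}Z_s)$. In the loops-only situation conditions (IF~\ref{pp5}--\ref{pp8}) are vacuous, so \emph{no} axiom constrains $G_0(X_0)$: the point $g_k(p)$ may lie outside every set of the family, and then (IF~\ref{pp2}) gives nothing. (The axioms do control $G_0(Z_s)$ via the dichotomy (IF~\ref{pp3},\ref{pp4}) --- which is exactly why the rest of your induction works --- but not $G_0(X_0)$.) The fix is cheap: replace $g$ by its conjugate $g_kgg_k^{-1}=(g_kg_0)\,t_1g_1\cdots g_{k-1}t_k$, which is still reduced with $k\ge 1$ stable letters and acts trivially if and only if $g$ does; now the first letter applied to $p\in X_0$ is the stable letter $t_k$, and (IF~\ref{pp2}) legitimately yields $t_k(p)\in Z_{t_k}$, after which your induction and your final step go through verbatim. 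With that one correction the argument is complete.
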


\begin{proof}
	As in the proof of Lemma~\ref{l:pp_HNN}, the case $\alpha_s(A_s)=G_0$ for every $s\in S$ must be treated separately, as in this case $G\cong G_0\rtimes F_n$. As the same proof as before works, with minor changes, we pass directly to the case $\alpha_s(A_s)\neq G_0$ for some $s\in S$. 
	If $n=0$ there is nothing to prove, and if $n=1$ the statement is the one in Lemma~\ref{l:pp_HNN}. 	
	So we assume $n>1$ and proceed by induction.
	As in the proof of Lemma \ref{l:pp_HNN}, the vertex $0$ is the ``spanning tree'' in $\overline X$ and $S=\{s_i,\overline s_i\}$. Also, only conditions (IF \ref{pp1}--\ref{pp4},\ref{pp9},\ref{pp10}) say something non trivial about the interactive family, and they reduce to the conditions listed in the proof of Lemma \ref{l:pp_HNN}, for every $s_1,\ldots,s_n$, with (IF \ref{pp1}) requiring additionally that all the subsets $X_0,Z_{s_i},Z_{\overline s_i}$ are pairwise disjoint, (IF~\ref{pp2}) dictating further inclusions, and (IF \ref{pp10}) requiring that the union of all images miss a point in $X_0$.
	In particular, by Lemma \ref{l:pp_HNN}, we have that the action restricted to $G_1:=\langle G_0,s_1\rangle\cong G_0*_{A_{s_1}}$ is faithful.
	
	Then we consider a new graph of groups $(\widehat X; G_1, A_s)$, where the graph $\widehat X$ is obtained from $\overline X$ by contracting the edge $s_1$, $G_1$ is the new vertex group, and all the edge groups remain the same, with boundary injections $\alpha_{s_i}:A_{s_i}\to G_1$, $\omega_{s_i}:A_{s_i}\to G_1$ obtained from the previous ones by post-composition with the natural injection $G_0\subset G_1$.
	
	Consider the new subset $X_1:=X_0\cup Z_{s_1}\cup Z_{\overline s_1}\subset \Omega$. We want to verify that $\boxminus'=\{X_1,Z_s\}_{s\neq s_1,\overline s_1}$ defines a proper interactive family for the new graph of groups. Again, we need only verify conditions (IF \ref{pp1}--\ref{pp4},\ref{pp9},\ref{pp10}). Observe that (IF \ref{pp1}) is easily verified, and for (IF \ref{pp2},\ref{pp3},\ref{pp9}) there is no new condition to verify. So we only need to check (IF \ref{pp4},\ref{pp10}).
	
	For (IF \ref{pp4}) we want to prove that for every $s\neq s_1,\overline s_1$, one has
	$(G_1\setminus \alpha_s(A_{s}))(Z_s)\subset X_1$.	
	Take $g\in G_0\setminus \alpha_s(A_{s})\subset G_1\setminus \alpha_s(A_{s})$; in this case $g(Z_s)\subset X_0\subset X_1$ by the old condition (IF \ref{pp4}) for $\boxminus$.	
	Next, take an element $g\in G_1\setminus G_0\subset G_1\setminus \alpha_s(A_{s})$ and write
	\begin{equation}\label{eq:normal_HNN}
	g= g_ks_1^{\epsilon_k}\cdots g_1s_1^{\epsilon_1}g_0,
	\end{equation}
	where $k\ge 1$, $\epsilon_j\in \{\pm 1\}$, $g_j\in G_0$, where if $g_j\in \alpha_{s_1}(A_{s_1})$ and $\epsilon_j=1$ then $\epsilon_{j+1}=1$, while if  $g_j\in \omega_{s_1}(A_{s_1})$ and $\epsilon_j=-1$ then $\epsilon_{j+1}=-1$ (normal form for HNN extensions \cite{LScombinatorial}). If $g_0\notin \alpha_s(A_s)$, then $g_0(Z_s)\subset X_0$ by the old condition (IF \ref{pp4}) for $\boxminus$. Otherwise $g_0(Z_s)\subset Z_s$ by the old condition (IF \ref{pp3}) for $\boxminus$. In both cases, $g_0(Z_s)$ does not intersect $Z_{s_1}\cup Z_{\overline s_1}$, so that applying the following letter in the expression \eqref{eq:normal_HNN} for $g$, one has $s_1^{\epsilon_1}g_0(Z_s)\subset Z_{s_1}\cup Z_{\overline s_1}$ (because of the old condition (IF \ref{pp2}) for $\boxminus$). Proceeding in this way (see~\cite[\S VII.D.11]{Maskit}), one has $g(Z_s)\subset X_1$, as desired.
	
	Finally, let us verify (IF \ref{pp10}).
	Observe that by (IF \ref{pp3}), the old condition (IF \ref{pp10}) for $\boxminus$ can be reformulated as
	\[
	\bigcup_{s\in S}G_0(Z_{s})\cap X_0\subset X_0\text{ is proper};
	\]
	the discussion in the previous paragraph gives the equality for every $s\neq s_1,\overline s_1$:
	$G_1(Z_{s})\cap X_0 = G_0(Z_{s})\cap X_0$.
	Thus one has that the inclusion
	\[
	\bigcup_{s\in S\setminus \{s_1,\overline s_1\}}G_1(Z_{s})\cap X_0\subset X_0\text{ is proper},
	\]
	which readily implies that the inclusion
	\[
	\bigcup_{s\in S\setminus \{s_1,\overline s_1\}}G_1(Z_{s})\cap X_1\subset X_1\text{ is proper}.\qedhere
	\]
\end{proof}

\subsection{Proof of Theorem~\ref{p:faithful}}\label{sc:ping-pong}

Observe first that if $X_v=\emptyset$ for every $v\in V$, then all edge groups must be isomorphic and coincide with every vertex group. Therefore one can contract the spanning tree $T$ to a single vertex, and the group $G$ is isomorphic to a semi-direct product $G_0\rtimes F_n$, already covered in Lemma~\ref{l:pp_multipleHNN}. From now on we assume $X_v\neq \emptyset$ for some $v\in V$. 

We proceed by induction on the number of edges of a spanning tree $T$ of the graph $\overline{X}$ (the number of edges of a spanning tree does not depend on the spanning tree). If there is no edge in $T$, then we are in the case covered by Lemma~\ref{l:pp_multipleHNN}. So let $\overline{X}=(V,E)$ be a graph with a marked spanning tree $T=(V,E_T)$ with at least one edge, $(\overline X; G_v,A_e)$ a graph of groups, and $G$ its fundamental group. Let us fix an edge $e_0\in E_T$ such that the corresponding inclusions in (IF \ref{pp10}) miss a point for $o(e_0)$, and write for simplicity $1=o(e_0)$, $2=t(e_0)$, $A=\alpha_{e_0}(A_{e_0})=\omega_{e_0}(A_{e_0})\subset G_1\cap G_2$. As in Lemma~\ref{l:pp_amalgam}, conditions (IF \ref{pp1},\ref{pp5}--\ref{pp7},\ref{pp9},\ref{pp10}) guarantee that the action restricted to $G_{0}:=\langle G_{1},G_{2}\rangle\cong G_{1}*_{A}G_{2}$ is faithful.

As in the proof of Lemma~\ref{l:pp_multipleHNN}, we define a new graph $\widehat X=(\widehat{V},\widehat E)$ obtained from $\overline X$ by contracting the edge $e_0$: $\widehat V= V/_{1\sim 2}\cong (V\setminus \{1,2\})\cup\{0\}$, $\widehat E=E\setminus \{e_0,\overline e_0\}$. In the following, we write $V_0:=V\setminus \{1,2\}\subset \widehat V$. We also denote by $\widehat T$ the image of the spanning tree $T$ under contraction, and we note that the set $S$ of edges in the complement of $E_T$ is not affected by contraction, so that we still denote by $S$ the complement $E_{\widehat T}$ in $\widehat E$. Then we define a new graph of groups $(\widehat X; G_v,A_e)$, where all the vertex groups $G_v$, $v\in V_0$, and edge groups $A_e$, $e\in \widehat E$, remain the same, while $G_0$ is the vertex group of the vertex $0$, which is the image of $e_0$ under contraction. Observe that boundary injections are naturally defined, as $G_{1}$ and $G_{2}$ are naturally identified as subgroups of $G_0\cong G_{1}*_{A}G_{2}$.
%PP1
%PP2
%PP3
We define $X_0:=X_{1}\cup X_{2}$. We want to verify that the new family $\boxminus'=\{X_0,X_v,Z_s\}_{v\in V_0,s\in S}$ is an interactive family for the new graph of groups. Condition (IF \ref{pp1}) is clear. Since we have not modified what depends on $S$, conditions (IF \ref{pp2},\ref{pp3}) follow from the old ones for $\boxminus$.

%PP4
Let us verify condition (IF \ref{pp4}). If $s\in S$ is such that $o(s)\neq 1,2$ in $\overline X$, then this reduces to the old condition (IF \ref{pp4}) for $\boxminus$. So let us assume $o(s)\in \{1,2\}$, and without loss of generality we assume $o(s)=1$. Then condition (IF \ref{pp4}) follows from the normal form in amalgamated products \cite{LScombinatorial} and the old (IF~\ref{pp3},\ref{pp4},\ref{pp7},\ref{pp8}) for $\boxminus$. More precisely, take an element $g\in G_0\setminus \alpha_s(A_s)$ (observe that $\alpha_s(A_s)\subset G_1\subset G_0$). If $g\in G_1\setminus \alpha_s(A_s)$, there is nothing new to verify. If $g\in G_2\setminus \alpha_s(A_s)=G_2\setminus A$, then $g(Z_s)\subset X_2^{\overline e_0}\subset X_0$ after the old condition (IF \ref{pp8}) for $\boxminus$.
  Thus we can assume $g\in G_0\setminus G_1$ and write $g$ as a normal form
\[
g=g_kh_k\cdots g_1h_1g_0 \quad \text{or}\quad g=h_k\cdots g_1h_1g_0,
\]
where $k\ge 1$, $g_0\in G_1$, $g_i\in G_1\setminus A$, $h_i\in G_2\setminus A$ for $i\ge 1$. If $g_0\in \alpha_s(A_s)$, then $g_0(Z_s)=Z_s$ by (IF \ref{pp3}) for $\boxminus$, otherwise $g_0(Z_s)\subset X_{1}$  by (IF \ref{pp4}) for $\boxminus$. In both cases, when applying $h_1\in G_2\setminus A$, one has $h_1(Z_s\cup X_1)\subset X_2^{\overline e_0}$ because of (IF \ref{pp7},\ref{pp8}) for $\boxminus$. Therefore $h_1g_0(Z_s)\subset X_2^{\overline e_0}$. Proceeding in this way, one ends up with $g(Z_s)\subset X_0$.

Next, let us introduce the subset
\[
\Omega_1:=\left (G_1\setminus A\right )\left (\bigcup_{v\in C(e_0,T)\setminus \{1,2\}} X_v\cup \bigcup_{s\in S\,:\,o(s)\in C(e_0,T)\setminus \{1,2\}} Z_s\right )
\]
where,  recalling our notation $C(e_0,T)$, the first union is taken over vertices $v\in V_0$ such that $e_0$ is in the geodesic path from $1$ to $v$ in $T$, and similarly the second one is defined over edges $s\in S$ such that $e_0$ is in the geodesic path from $1$ to $o(s)$ in $T$, and $o(s)\in V_0$. By the old conditions (IF~\ref{pp7},\ref{pp8}) for $\boxminus$, we have $\Omega_1\subset X_1^{e_0}$.
Similarly, we introduce the subset
\[
\Omega_2:=\left (G_2\setminus A\right )\left (\bigcup_{v\in C(\overline e_0,T)\setminus \{1,2\}} X_v\cup \bigcup_{s\in S\,:\,o(s)\in C(\overline e_0,T)\setminus \{1,2\}} Z_s\right )
\]
where unions are taken over vertices $v\in V_0$ and edges $s\in S$ verifying analogous properties with respect to $\overline e_0$. Then $\Omega_2\subset X_2^{\overline e_0}$ by the old conditions (IF \ref{pp7},\ref{pp8}) for $\boxminus$.

%PP5
For $e\in E_{\widehat T}$ such that $o(e)=0$ in $\widehat X$ (that is, $o(e)\in \{1,2\}$ in $\overline X$), we introduce subsets $X_0^e\subset X_0$.
Let us also introduce some extra notation. Let $i\in\{1,2\}$ be the vertex such that $o(e)=i$ in $\overline X$ and the vertex $i'$ be the one in $\{1,2\}$ different from $i$; for $k\ge 0$, define
\begin{align*}
Y'_k&=\left ((G_{i'}\setminus A)(G_{i}\setminus A)\right )^k(\Omega_{i'}),\\
Y_k&=(G_{i}\setminus A)\left ((G_{i'}\setminus A)(G_{i}\setminus A)\right )^k(\Omega_{i'})=(G_{i}\setminus A)(Y_k'),
\end{align*}
which are respectively subsets of $X_{i'}^{f}$ and $X_i^{\overline f},$ where $f\in\{e_0,\overline e_0\}$ is such that $i'=o(f)$. Then we define
\[
X^e_0 :=
X_{i}^e\cup\bigcup_{k\ge 0}Y'_k\cup  Y_k,\]
which is a subset of $X_i^e\cup X_1^{e_0}\cup X_2^{\overline e_0}\subset X_0$, as required for (IF \ref{pp5}). We have to verify that this is the good choice for the other conditions.

%PP6
Let us check (IF \ref{pp6}). The only new condition that we have to prove is that if an edge $e$ in $\widehat X$ satisfies $o(e)=i\in\{1,2\}$ in $\overline X$, then $\alpha_e(A_e)(X_0^e)\subset X_0^e$.
For this,  we observe that after the old condition (IF \ref{pp6}) for $\boxminus$, $\alpha_e(A_e)(X_i^e)=X_i^e$ and the corresponding restriction of the action is faithful, after (IF \ref{pp9}) for $\boxminus$.
This immediately implies that the restriction of the action of $\alpha_e(A_e)$ to $X_0^e$ is also faithful, establishing (IF~\ref{pp9}).
Take now an element $a\in \alpha_e(A_e)\setminus A\subset G_i\setminus A$, then, working with the definitions of $Y_k$ and $Y_k'$, one verifies that $a (Y'_k)\subset Y_{k}$ and $a(Y_k)\subset Y_k\cup Y'_{k}$; similarly, if $a\in \alpha_e(A_e)\cap A\subset G_1\cap G_2$, then $a(Y_k)\subset Y_k$ and $a(Y'_k)\subset Y'_k$ (in this last case, when $k=0$, one has to use the definition of $\Omega_{i'}$ as an image by $G_{i'}\setminus A$, which is $A$-invariant).  This gives (IF \ref{pp6}).

%PP7
Let us check (IF \ref{pp7}). There are two cases where the old condition (IF \ref{pp7}) for $\boxminus$ does not apply directly. First, assume that $v=0$, and $e\in E_{\widehat T}$ is in the geodesic path in $T$ from $o(e)$ to $v$ (using our notation, we are assuming $0\in C(e,\widehat T)$). Observe that if this happens for $v=1$ in $\overline X$ then this happens also for $v=2$ in $\overline X$ (because we can concatenate the geodesic path with $e_0$, if it does not contain $e_0$ already), and vice versa.
Then the old condition (IF \ref{pp7}) for $\boxminus$ gives that $\left (G_{o(e)}\setminus \alpha_e(A_e)\right )(X_1\cup X_2)\subset X_{o(e)}^e$, as wanted.
Second, assume that $v\in V_0$, and $o(e)=0$. 
Then we argue as for (IF \ref{pp4}) using the normal form in amalgamated products, and we use the definition of the subsets $Y_k,Y_k'$ (that we made on purpose)
to get $\left (G_0\setminus \alpha_e(A_e)\right )(X_v)\subset X_0^e$.
%PP8
Similarly one proves condition (IF \ref{pp8}).

%PP9
It remains to verify condition (IF \ref{pp10}). 
We want to prove that for the new vertex $0\in \widehat V$, the union of all the images from the new (IF~\ref{pp4},\ref{pp7},\ref{pp8}) for $\boxminus'$ inside $X_0=X_1\sqcup X_2$ misses a point. As we have started with old condition (IF \ref{pp10}) for $\boxminus$ being satisfied by the vertex $1\in V$, we will simply look at the intersection of such new images with $X_1\subset X_0$. But then, using the normal form as before, we get that the new images in $X_1$ under elements in $G_0$ are contained in images in $X_1$ under elements of $G_1$, so the new (IF \ref{pp10}) for $\boxminus'$ follows from the old condition (IF \ref{pp10}) for $\boxminus$.\qed

\subsection{The arboreal partition}\label{sc:arboreal_part}\label{ssc.dyn_arb}

In this section we explain how interactive families are naturally defined 
for proper actions of a group on a tree. In fact, the list of conditions in Definition~\ref{d:generalized_ping-pong} of interactive family have been deduced by looking carefully at the properties enjoyed by these natural partitions on trees.
We do not claim originality for this part: experts in Bass--Serre theory know this matter very well, and it appears in many places in the literature, with slight modifications (see for instance \cite[\S 12.3]{hyperbolic} or \cite[\S 3.2]{automatic}). Indeed, typically one looks at the partition induced on the \emph{boundary} of the tree.

Consider an orientation-preserving proper action $\alpha:G\to \Isom_+(X)$ of a group $G$ on a tree $X$. In the following we shall only consider \emph{right actions}.
Let $T=T^G\subset X$ be a connected fundamental domain for the action $\alpha$.
By such a notion, we mean here that $T$ is a minimal connected  subset containing exactly a point of every orbit of the action on the simplicial tree $X$. Furthermore, we require that the boundary $\partial T$ consists only of vertices of the tree ($T$ may fail to be closed).
We denote by $\{X_c=X_c^G\}_{c\in \pi_0(X\setminus T)}$ the collection of 
connected components of the complement of $T$ in $X$. When $X_c$ is adjacent to a vertex $v\in T$, we denote by $e_c$ the edge of $X_c$ which is adjacent to this vertex, oriented outwards, namely $o(e_c)=v$.
Otherwise, there is a unique vertex $v_c\in X_c$ adjacent to $T$. We let $\delta_c$ denote either the oriented edge $e_c$ or the vertex $v_c$, accordingly.
By the moment we do not put any further restriction on the action (and on the group).

For any $c\in \pi_0(X\setminus T)$, we define
\[W_c=W_c^G:=\{g\in G\mid T.g\subset X_c\}.\]
Let $G^\circ$ be the set of elements in $G$ which fix some vertex in $T$:
\[
G^\circ:=\{g\in G\mid \text{$g$ fixes a vertex in $T$}\}.
\]
As the action has finite stabilizers, the set $G^\circ$ is finite.
We have a partition
\[
G=G^\circ \sqcup \bigsqcup_{c\in \pi_0(X\setminus T)}W_c.
\]
In the rest of this section we study how this partition, which we call the \emph{arboreal partition}, behaves under the regular action of the group $G$ on the right.

\begin{lem}
	For any connected component $c,d\in \pi_0(X\setminus T)$ and element $g\in G$ such that $X_c.g\subset X_d$, then
	\[
	W_cg\subset W_d.
	\]
\end{lem}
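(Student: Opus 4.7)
The proof is essentially a direct unwinding of the definitions, exploiting the fact that the action on $X$ is a right action (so composition behaves associatively with respect to the action on subsets).

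My plan is as follows. Take any $h \in W_c$, meaning by definition that $T.h \subset X_c$. I want to show $hg \in W_d$, i.e.\ $T.(hg) \subset X_d$. Using the associativity of the right action on subsets, $T.(hg) = (T.h).g$. Since $T.h \subset X_c$, applying $g$ on the right gives $(T.h).g \subset X_c.g$, and by the hypothesis $X_c.g \subset X_d$, we conclude $T.(hg) \subset X_d$, which is exactly the statement $hg \in W_d$.

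That is the whole argument. There is no real obstacle: the lemma is a formal consequence of the fact that the map $Y \mapsto Y.g$ is monotone for inclusion and that right multiplication is associative at the level of group elements and at the level of set-wise action. I would just write out the two-line chain of inclusions displayed above, with perhaps a sentence reminding the reader that the action of $G$ on $X$ considered here is a right action (as emphasized in the paragraph preceding the lemma), so that $T.(hg) = (T.h).g$ rather than $g.(h.T)$.
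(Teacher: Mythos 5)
Your proof is correct and is essentially identical to the paper's own argument: take $h\in W_c$, so $T.h\subset X_c$, and then $T.hg=(T.h).g\subset X_c.g\subset X_d$, giving $hg\in W_d$. Nothing further is needed.
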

\begin{proof}
	If $h\in W_c$ then $T.h\subset X_c$.  The condition implies $T.hg\subset X_d$, as wanted.
\end{proof}

\begin{lem}
	Take connected components $c,d\in \pi_0(X\setminus T)$ and an element $g\notin G^\circ$. Then
	\[
	X_c.g\subset X_d \quad\text{if and only if}\quad g\in W_d\text{ and }g^{-1}\notin W_c.
	\]
\end{lem}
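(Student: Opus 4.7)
The plan is to exploit the tree structure heavily: once $T$ is removed, the components $X_c$ are entirely separated by $T$, so that no single edge can join two distinct $X_c$'s unless one of its endpoints lies in $T$. The hypothesis $g\notin G^\circ$ rules out the degenerate situations where $T.g$ and $T$ overlap.

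For the forward direction, I would assume $X_c.g\subset X_d$ and first observe the easy half: the inclusion $X_c.g\subset X_d\subset X\setminus T$ gives $T\not\subset X_c.g$, which is equivalent to $T.g^{-1}\not\subset X_c$, i.e.\ $g^{-1}\notin W_c$. For the other half, since $g\notin G^\circ$, the previous lemma (or the preceding discussion) gives $T.g\cap T=\emptyset$, so $T.g$ is contained in some unique component $X_{d'}$ and I must show $d'=d$. The edge $e_c.g$ has its origin $v_c.g\in \overline{T.g}$ and its target $t(e_c).g\in X_c.g\subset X_d$. If we had $d'\ne d$, the edge $e_c.g$ would be a single edge joining two distinct components $X_{d'}$ and $X_d$ of $X\setminus T$; since distinct components are separated by $T$, this would force one of its endpoints to lie in $T$. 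A quick case analysis (whether $v_c\in T$ or $v_c\in X_c$) then contradicts either $g\notin G^\circ$ or $X_c\cap T=\emptyset$, so $d'=d$ and $g\in W_d$.

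For the backward direction, I would assume $g\in W_d$ and $g^{-1}\notin W_c$, and argue that $X_c.g\subset X_d$. Since $g\notin G^\circ$, $T$ is disjoint from $T.g$, so $T$ lies in a unique component of $X\setminus T.g$, and this component is exactly $X_{c^*}.g$ where $c^*$ is the unique index with $g^{-1}\in W_{c^*}$. The assumption $g^{-1}\notin W_c$ yields $c\ne c^*$, and hence $X_c.g$, being a component of $X\setminus T.g$ different from $X_{c^*}.g$, does not meet $T$. By connectedness, $X_c.g$ therefore sits inside a single component $X_{d''}$ of $X\setminus T$. To identify $d''=d$, note that $X_c.g$ is joined to $T.g\subset X_d$ by the single edge $e_c.g$; if $d''\ne d$, this edge would once again bridge two different components of $X\setminus T$, forcing an endpoint in $T$ and producing the same contradictions as before. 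Hence $d''=d$, finishing the proof.

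The main obstacle is purely bookkeeping: the vertex $v_c=o(e_c)$ may belong either to $T$ or to $X_c$ depending on how the fundamental domain is closed up, so the case analysis ``endpoint of $e_c.g$ lies in $T$'' has to be done carefully, using in one case that $T$ meets each $G$-orbit in exactly one point (to exclude $v_c.g=v_c$ and hence $g\in G_{v_c}\subset G^\circ$), and in the other case that $X_c\cap T=\emptyset$. Once this is cleanly set up, both directions collapse to the elementary tree fact ``distinct components of $X\setminus T$ are not joined by an edge lying off $T$''.
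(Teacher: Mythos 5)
Your proof is correct and follows essentially the same route as the paper's: both reduce everything to the observation that, since $g\notin G^\circ$ forces $T\cap T.g=\emptyset$, the condition $g^{-1}\notin W_c$ is precisely the condition that $X_c.g$ (which hangs off $T.g$) misses $T$, after which connectedness places $X_c.g$ in the same component of $X\setminus T$ as $T.g$. The paper packages this via the geodesic from $T$ to $T.g$ and is much terser, leaving the identification of the component $X_d$ implicit, whereas you spell out that step (and the bookkeeping about whether $o(e_c)$ lies in $T$ or in $X_c$) explicitly.
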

\begin{proof}
	The condition that $g\in W_d$ and $g^{-1}\notin W_c$ is equivalent to 
	\[T.g\subset X_d\quad{\text{and}}\quad T\not\subset X_c.g.\]
	Since $T\cap T.g=\emptyset$, the condition $T\not\subset X_c.g$ is equivalent to $T\cap X_c.g=\emptyset$ (because $X_c.g$ is a connected component of $X\setminus T.g$). As $X_c.g$ is adjacent to $T.g$ and $T.g\subset X_d$, this is equivalent to $X_c.g\subset X_d$, as desired.
\end{proof}

\begin{lem}\label{l:arboreal_vertex}
	Take connected components $c, d\in \pi_0(X\setminus T)$ and an element $g\in G^\circ$. Then 
	\[
	X_c.g\subset X_d\quad\text{if and only if}\quad
		\delta_c.g\text{ is in } X_d.
	\]
\end{lem}
\begin{proof}
	As $\delta_c$ is in $X_c$, the nontrivial direction is the ``if'' part.
	An element in $G^\circ$ is an elliptic isometry of the tree: the connected component $X_c$ turns (possibly stays fixed) around $T\cap T.g$.
	Consider the concatenation $\gamma_g$ of the (possibly empty) geodesic path from  $T\cap T.g$ to $X_c$ with $\delta_c$. When this is a single point, then $\delta_c=v_c$, and we have $\delta_c.g=\delta_c$ and $X_c.g=X_c$. Otherwise, the action of $g$ preserves the orientation of $\gamma_g$ (because $g$ is an elliptic isometry of the tree). Thus, if $\delta_c.g$ is in $X_d$, then $X_c.g$ cannot intersect $T$, so $X_c.g\subset X_d$.
\end{proof}

The above lemmas easily imply the following:

\begin{prop}\label{p:inclusionW}Take connected components $c, d\in \pi_0(X\setminus T)$ and an element $g\in G$. The following conditions are equivalent.
	\begin{enumerate}[1.]
		\item $W_cg\subset W_d$.
		\item $X_c.g\subset X_d$.
		\item One has
		\begin{enumerate}[(1)]
			\item \label{i:inclusion1} either $g\in W_d$ and  $g^{-1}\notin W_c$, or
			\item \label{i:inclusion2} $g\in G^\circ$ and $\delta_c.g$ is in $X_d$.
		\end{enumerate}
	\end{enumerate}
\end{prop}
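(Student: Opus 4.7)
The plan is to establish the three-way equivalence by combining the three preceding lemmas. My approach will be to derive $(2) \Leftrightarrow (3)$ and $(2) \Rightarrow (1)$ directly from those lemmas, then close the loop with $(1) \Rightarrow (2)$ via a case analysis on whether $g \in G^\circ$.

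For $(2) \Leftrightarrow (3)$, I would first note that the two alternatives in $(3)$ are mutually exclusive: $(3a)$ forces $g \in W_d$, hence $T.g \cap T = \emptyset$, which rules out $g \in G^\circ$. The second lemma above therefore gives $(2) \Leftrightarrow (3a)$ when $g \notin G^\circ$, while Lemma~\ref{l:arboreal_vertex} gives $(2) \Leftrightarrow (3b)$ when $g \in G^\circ$. The implication $(2) \Rightarrow (1)$ is exactly the content of the first lemma of the subsection.

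For the remaining implication $(1) \Rightarrow (2)$, assume $W_c g \subset W_d$. A key preliminary observation is that $g^{-1} \notin W_c$: otherwise $id = g^{-1} g$ would lie in $W_c g \subset W_d$, contradicting $id \notin W_d$ (since $T \cap X_d = \emptyset$). If $g \notin G^\circ$, then $g^{-1}$ is not in $G^\circ$ either, so $T.g^{-1}$ sits in some $X_{c''}$ with $c'' \neq c$, which gives $X_c \cap T.g^{-1} = \emptyset$ and equivalently $X_c.g \cap T = \emptyset$. The connected set $X_c.g \subset X \setminus T$ then lies in a single component $X_{d'}$, and picking any $h \in W_c$, the inclusions $T.h \subset X_c$ and $T.hg \subset X_d$ place $T.hg$ in $X_{d'} \cap X_d$, forcing $d' = d$. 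If instead $g \in G^\circ$, by Lemma~\ref{l:arboreal_vertex} it suffices to show $e_c.g \subset X_d$; one would pick $h \in W_c$ whose translate $T.h$ contains the far endpoint $v_c^+$ of $e_c$, read off $v_c^+.g \in X_d$ from $T.hg \subset X_d$, and conclude that the edge $e_c.g$ sits in $X_d$ via its adjacency to the $G$-orbit of $T$.

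The main obstacle will be precisely this last step in the $g \in G^\circ$ case: justifying the existence of a suitable $h \in W_c$ whose translate reaches $v_c^+$ is delicate, since the natural representatives in the right coset of $\stab(v_c^+)$ realizing $v_c^+$ may well lie in $G^\circ$ rather than $W_c$. I expect to handle this by exploiting the proper, cocompact nature of the action on the locally finite tree, perhaps by selecting a vertex slightly deeper inside $X_c$ where the covering by translates of $T$ is unambiguous, and propagating the conclusion back toward $e_c$ along the unique path in $X_c$.
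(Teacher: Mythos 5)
The paper offers no argument for Proposition~\ref{p:inclusionW} beyond the sentence ``the above lemmas easily imply the following,'' so the real comparison is between your write-up and the three lemmas preceding it. Your derivation of $2\Leftrightarrow 3$ (splitting on $g\in G^\circ$, after noting that alternative (1) forces $T.g\cap T=\emptyset$ and hence excludes $g\in G^\circ$), your reading of $2\Rightarrow 1$ off the first lemma, and your treatment of $1\Rightarrow 2$ when $g\notin G^\circ$ are all correct and complete --- modulo the standing assumption $W_c\neq\emptyset$, which you use tacitly when ``picking any $h\in W_c$'' and which is in fact needed for the statement itself (if $W_c=\emptyset$, item 1 holds vacuously for every $d$ while item 2 can hold for at most one).

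The gap you flag in the elliptic case is genuine, and your suspicion about your own plan is well founded: the translates of $T$ containing $v_c^+=t(e_c)$ form a single finite family obtained by rotating one of them by $\stab{G}{v_c^+}$, and nothing forces any member of that family to lie inside $X_c$ --- each may cross back over $e_c$ and meet $T$, in which case no $h\in W_c$ with $v_c^+\in T.h$ exists. You can, however, offload most of the work onto what you have already proved. Since the sets $W_{d'}$ are pairwise disjoint and $W_cg\neq\emptyset$, the moment $X_c.g$ is contained in \emph{some} component $X_{d'}$ of $X\setminus T$, the implication $2\Rightarrow 1$ yields $W_cg\subset W_{d'}$ and disjointness forces $d'=d$; no distinguished $h$ is needed to identify the target. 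The elliptic case therefore reduces to excluding $X_c.g\cap T\neq\emptyset$, i.e.\ $X_c\cap T.g^{-1}\neq\emptyset$. There, if $T.g^{-1}\subset X_c$ then $id=g^{-1}g\in W_cg\subset W_d$, absurd; otherwise $T.g^{-1}$ is connected, meets both $X_c$ and $T$ (as $g\in G^\circ$), hence contains the open edge $e_c^{\circ}$, and one must still exhibit \emph{some} $h\in W_c$ with $T.h\cap T.g^{-1}\neq\emptyset$, since then $T.hg$ meets $T$, so $hg\in G^\circ$ and $hg\notin W_d$. Producing such an $h$ --- essentially the assertion that the translates of $T$ contained in $X_c$ reach all the way up to the finitely many translates through $e_c^{\circ}$ --- is the one piece of genuine content in the proposition, it is exactly where your argument stops, and your closing sketch (``selecting a vertex slightly deeper inside $X_c$ \dots and propagating the conclusion back'') does not yet constitute a proof of it.
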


To summarize the content of this proposition, we introduce the following sets: 

\begin{dfn}
	Given two connected components $c,d\in \pi_0(X\setminus T)$, not necessarily distinct, we define
	\[
	W_{c\rightarrow d}:=\left \{g\in G\,\middle\vert\, W_cg^{-1}\subset W_d\right \}.
	\]
\end{dfn}

Thus, by Proposition~\ref{p:inclusionW}, the set $W_{c\rightarrow d}$ coincides with
\[
\left \{ g\in G \,\middle\vert\, g^{-1}\in W_d,\,g\notin W_c \right \}
\cup \left \{g\in G^\circ \,\middle\vert\, \delta_c.g^{-1}\text{ is in } X_d
\right \}.
\]

\begin{rem}
	The sets $W_{c\rightarrow d}$ are not defining a partition of the group. For example the identity is in every $W_{c\rightarrow c}$ as well as any other element in $G$ acting trivially on the tree.
\end{rem}

\begin{rem}
	The definition of $W_{c\rightarrow d}$ may appear counterintuitive because of the presence of inverses. We will see that it is motivated by Lemma~\ref{l:top-alg}, which makes the connection between the action of the group $G$ on the circle and that on a tree $X$. This connection changes the side of the action: since we are interested in studying the dynamics on the circle (for which we use the classical convention of left actions), this morally explains why we want
	$W_cg^{-1}\subset W_d$ if $g\in W_{c\rightarrow d}$.
\end{rem}

\subsection{Arboreal ping-pong}\label{sc:arboreal_ping-pong} 
Here we verify that from an action on a tree we can define an interactive family, in the sense of Definition \ref{d:generalized_ping-pong} (but for right actions!).
For this we maintain the notation introduced in the previous subsection.
That is, we have an action $\alpha:G\to\Isom_+(X)$ of the group $G$ on a tree $X$, and we choose a connected fundamental domain $T$. This gives a presentation of $G$ as the fundamental group of the graph of groups $(\overline X; G_v,A_e)$, as we now describe. The set of complementary edges $S$ correspond to the edges of the fundamental domain $T$ which only have one endpoint in $T$; denote by $\overline T$ the injective projection of $T\setminus S$ to $\overline X$, which is a spanning tree. The groups $G_v$ and $A_e$ are clearly defined as stabilizers for the action $\alpha:G\to \Isom_+(X)$, and so are defined the boundary injections. We will write $\overline X= (V,E)$ as usual. Observe that the set of vertices $V$ lifts to the set of vertices of the fundamental domain $T$ and similarly does the set of edges $E_{\overline T}$.

The interactive family will be defined for the action on $X$. The family of subsets $\boxminus=\{X_v,Z_s\}$ will be defined taking unions of subsets of the form $X_c\subset X$, $c\in\pi_0(X\setminus T)$.

\begin{itemize}
	\item Given a complementary edge $s$, oriented such that $o(s)$ is a vertex of $\overline T$ (and hence of $T$ in the lift to $X$), the associated stable letter (still denoted by $s$) acts on $X$ as a hyperbolic isometry, and hence has an oriented translation axis $X(s)$ which contains the oriented edge $s$; the complement $X(s)\setminus T$ has two connected components, and the orientation of $X(s)$ dictates which component is positive, and which is negative; we denote them respectively by $X^+(s)$ and $X^-(s)$. Then, we 	
	define $Z_s$ as the connected component of $X\setminus T$ which contains $X^+(s)$
	(as $X^+(\overline s)=X^-(s)$ this gives that $Z_{\overline s}$ is the connected component of $X\setminus T$ containing $X^-(s)$).
	\item For a vertex $v\in V$, the subset $X_v$ is defined as the (disjoint) union of the connected components $X_c$ of $X\setminus T$ such that $o(e_c)=v$ in the tree $X$, and which are not one of the components $Z_s$ described just above.
	\item Given $e\in E$ such that $o(e)=v$ is a vertex of $T$, we define the subset $X_v^e$ to be the union of the connected components $X_c$ of $X\setminus T$ such that $e_c$ is the image of $e$ by some element of the vertex group $G_v$. Observe that with this choice we have $X_v=\bigsqcup_{e\in \St_{\overline X}(v)}X_v^e$, where $\St_{\overline X}(v)=\left \{e\in E\mid o(e)=v\right \}$ denotes the star of $v$ in the quotient graph $\overline X$. 
	
	\begin{rem}\label{r:extra}
		Note that the definition of $X_v$ includes also components $X^s_v$ for complementary edges $s\in S$ with $o(s)=v$; moreover all subsets $X_v^e$ are pairwise disjoint. We do not require these particular conditions in Definition \ref{d:generalized_ping-pong}, but they will be important for the notion of ping-pong partitions on the circle that will appear as Definition \ref{d:markov-partition}.
	\end{rem}
\end{itemize}

It is routine to check that the family of subsets $\boxminus=\{X_v,Z_s\}$ defined above forms an interactive family. We give some detail of proof as it will important for Theorem \ref{t:main}.

\begin{prop}\label{p:arboreal_ping-pong}
	Given an orientation-preserving proper action $\alpha:G\to \Isom_+(X)$ of a group $G$ on a tree $X$, and a connected fundamental domain $T\subset X$, the family of subsets $\boxminus=\{X_v,Z_s\}$ defined above constitutes an interactive family for the action of $G$ on $X$.
\end{prop}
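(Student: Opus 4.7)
The plan is systematic verification of axioms (IF~\ref{pp1})--(IF~\ref{pp8}) using the dictionary between algebra and tree geometry summarized in Proposition~\ref{p:inclusionW} and Lemma~\ref{l:arboreal_vertex}. Every set in $\boxminus$ is a disjoint union of components $X_c \in \pi_0(X\setminus T)$; hence the disjointness in (IF~\ref{pp1}) reduces to the observation that each component is assigned at most once in the construction. Non-emptiness of $Z_s$ is witnessed by the positive half-axis $X^+(s)$ of the hyperbolic isometry $s$, and non-emptiness of each $X_v^e$ below (when $G_v\neq\alpha_e(A_e)$) in turn witnesses non-emptiness of $X_v$.

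For (IF~\ref{pp2}), the isometry $s\in S$ has axis $X(s)$ oriented so that $X^+(s)\subset Z_s$; every subset of $X$ disjoint from $Z_{\overline s}$ is therefore translated by $s$ into $Z_s$. Condition (IF~\ref{pp3}) follows from the fact that $\alpha_s(A_s)$ is the stabilizer of the edge $s$, hence preserves each side of $X\setminus\{s\}$. For (IF~\ref{pp4}), if $g\in G_{o(s)}\setminus\alpha_s(A_s)$, then $s\cdot g$ is a new edge at $o(s)$ bounding the component $Z_s\cdot g$ (Lemma~\ref{l:arboreal_vertex}); this component lies in $X_{o(s)}$ because if it coincided with some $Z_{s'}$ we would have $s\cdot g=s'$ as edges of $X$, but as $T$ contains a unique lift from each $G$-orbit of oriented edges of $\overline X$, this would force $s=s'$ and therefore $g\in\alpha_s(A_s)$, a contradiction.

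For (IF~\ref{pp5}) and (IF~\ref{pp6}), I would describe $X_v^e$ as the union, over cosets $g\,\alpha_e(A_e)$ in $G_v/\alpha_e(A_e)$, of the components of $X\setminus T$ bounded by the translated edge $e\cdot g$: this is well-defined and non-empty precisely when $G_v\neq\alpha_e(A_e)$, and right multiplication by $a\in\alpha_e(A_e)$ sends $e\cdot g$ to $e\cdot ga$, an edge in the same coset, so $X_v^e$ is $\alpha_e(A_e)$-invariant. For (IF~\ref{pp7}) and (IF~\ref{pp8}), I would first establish the geometric observation that any component appearing in $X_v$ (respectively $Z_s$) lies in the half-tree $H_e$ beyond $e$ on the $t(e)$-side whenever $v\in C(e,T)$ (respectively $o(s)\in C(e,T)$), since every path from such a component back to $T$ must traverse $e$. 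An element $g\in G_{o(e)}\setminus\alpha_e(A_e)$ fixes $o(e)$ but sends $e$ to a distinct edge $e\cdot g\notin T$, and consequently maps $H_e$ isometrically to the disjoint half-tree $H_{e\cdot g}$; the translate of a component $X_c\subset X_v$ (respectively of $Z_s$) is therefore a connected subset of $H_{e\cdot g}\setminus\{o(e)\}$, lying in the unique component of $X\setminus T$ attached to $T$ through the boundary edge $e\cdot g$, which by construction belongs to $X_{o(e)}^e$.

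The main delicate point, I expect, is the geometric claim underpinning (IF~\ref{pp7}) and (IF~\ref{pp8}), namely the assertion $T\cap H_{e\cdot g}=\{o(e)\}$. This rests on the fact that $T$ is a connected fundamental domain, so it meets $H_{e\cdot g}$ only by crossing the edge $e\cdot g$; but $e\cdot g\notin T$ by uniqueness of orbit representatives, so no vertex of $T$ lies past $e\cdot g$. With this in hand, the image components remain in $X\setminus T$, attach to $T$ at $o(e)$ through $e\cdot g$, and the verification of all axioms is complete.
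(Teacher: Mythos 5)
Your verification is correct and follows essentially the same route as the paper: the paper's own proof checks (IF 1)--(IF 3) by the same edge-orbit and half-tree arguments (disjointness of components, non-emptiness via $e.g\neq e$ with $o(e.g)=o(e)$, invariance of $Z_s$ because $\alpha_s(A_s)$ fixes the edge $s$) and explicitly dismisses (IF 4)--(IF 8) as ``analogous'', which is precisely what you carry out in detail, including the key observation that $T$ meets the half-tree beyond $e.g$ only at $o(e)$ because $T$ contains a unique edge in each orbit. The only cosmetic slip is in your treatment of (IF 6): for the right action the components of $X_v^e$ are indexed by right cosets $\alpha_e(A_e)g$ in $\alpha_e(A_e)\backslash G_v$ rather than left cosets $g\,\alpha_e(A_e)$, but your conclusion is unaffected since $e.(ga)$ is in any case a $G_v$-translate of $e$ and hence still bounds a component of $X_v^e$.
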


\begin{proof}
	As the subsets $X_v,Z_s$ are defined by taking union of different connected components, it is clear that they are pairwise disjoint. If there exists $e\in E$ such that $o(e)\in V$ and $G_{o(e)}\neq \alpha_e(A_e)$, then taking $g\in G_{o(e)}\setminus \alpha_e(A_e)$, one has $e.g\neq e$, $o(e.g)=o(e)$, so that $g$ sends $T$ inside the connected component $X_c$ of $X\setminus T$ containing $e.g$, and $X_c\subset X_{o(e)}$. This proves (IF~\ref{pp1}). The definition of the subsets $Z_s$ gives (IF~\ref{pp2}) easily. For (IF~\ref{pp3}), observe that $s$ is the edge of $X(s)$ sharing exactly one vertex with $T$ and oriented as $X(s)$, so that also $X^+(s)$ shares exactly one vertex with $s$; the subgroup $\alpha_s(A_s)$ fixes the edge $s$, and hence the vertex $X^+(s)\cap s$. This implies that $\alpha_s(A_s)$ preserves $Z_s$, so that (IF~\ref{pp3}) is proved. The other properties (IF~\ref{pp4}--\ref{pp8}) are deduced by analogous arguments.	
\end{proof}

\begin{rem}\label{r:strongIF4}
As an addendum to Remark \ref{r:extra},
we also observe that the collection of subsets $\{X_v^e,Z_s\}$ satisfies a strengthening of the properties of an interactive family. Indeed
the subsets $X_v^s$, for $s\in S$ such that $o(s)=v$, are such that $\left (G_v\setminus \alpha_s(A_s)\right )\left (Z_s\right )\subset X_v^s$, which is stronger than (IF \ref{pp4}), and additionally $\alpha_s(A_s)(X_v^s)=X_v^s$.
\end{rem}

\subsection{Finite-index subgroups}\label{sc:finite_index}

Let $H\subset G$ be a subgroup of finite index. Then the action $\alpha:G\to \Isom_+(X)$ restricts to an action of $H$. Thus we have subsets $W_c^H\subset H$.
Let $T^G$ be a connected fundamental domain for the action of $G$, and let $T^H\supset T^G$ be a connected fundamental domain for the action of $H$.
The inclusion $X\setminus T^H\subset X\setminus T^G$ induces a map $\iota:\pi_0(X\setminus T^H)\to \pi_0(X\setminus T^G)$, defined by $\iota(d)=c$ if and only if
$X^H_d\subset X_c^G$.

\begin{lem}\label{l:finite_index}
	With notation as above, for any connected component $c\in \pi_0(X\setminus T^G)$ we have
	\[
	W_c^G\cap H=\bigsqcup_{\iota(d)=c}W_d^H.
	\]
\end{lem}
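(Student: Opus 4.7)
The plan is to prove the two inclusions separately; the $\supseteq$ direction is immediate from unpacking definitions, while $\subseteq$ is the substantial content.

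For $\supseteq$: suppose $h \in W_d^H$ with $\iota(d) = c$. Then by definition $T^H \cdot h \subset X_d^H$, and the defining condition $\iota(d) = c$ asserts $X_d^H \subset X_c^G$. Since $T^G \subset T^H$, this yields $T^G \cdot h \subset T^H \cdot h \subset X_c^G$, so $h \in W_c^G$, while $h \in H$ was given. Disjointness of the union on the right-hand side is automatic: if $h \in W_d^H \cap W_{d'}^H$ with $d \neq d'$, then $T^H \cdot h$ would lie simultaneously in two disjoint components $X_d^H, X_{d'}^H$, which is impossible.

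For $\subseteq$: take $h \in W_c^G \cap H$, and I aim to produce $d$ with $\iota(d) = c$ and $h \in W_d^H$. The image $T^H \cdot h$ is connected, as $h$ acts by tree isometries and $T^H$ is connected. The plan is to show first that $T^H \cdot h \cap T^H = \emptyset$, forcing $T^H \cdot h$ into a single component $X_d^H$ of $X \setminus T^H$; then the inclusions $T^G \cdot h \subset T^H \cdot h \subset X_d^H$ together with $T^G \cdot h \subset X_c^G$ -- and the observation that $X_d^H \subset X \setminus T^G$ must lie in a unique component of $X \setminus T^G$ -- force $X_d^H \subset X_c^G$, i.e., $\iota(d) = c$, so $h \in W_d^H$.

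The crux is thus $T^H \cdot h \cap T^H = \emptyset$. Since $T^H$ contains exactly one representative per $H$-orbit and $h \in H$, any intersection point must be a vertex $v \in T^H$ fixed by $h$. A fixed vertex in $T^G$ is immediately excluded: $v \in T^G$ with $v \cdot h = v$ would give $v \in T^G \cap T^G \cdot h$, contradicting the hypothesis $T^G \cdot h \subset X_c^G$. To rule out a fixed vertex in $T^H \setminus T^G$, I would use the decomposition $T^H = \bigsqcup_i T^G \cdot g_i$ along left coset representatives $g_i$ of $H$ in $G$ (with $g_1 = e$), and for each $i \neq 1$ analyse the piece $T^G \cdot (g_i h)$ of $T^H \cdot h$: a fixation $v \cdot h = v$ at $v = v_0 \cdot g_i$ is equivalent to $g_i h g_i^{-1} \in \stab{G}{v_0}$, and combining this coset condition with $h \in H$ must be shown to force $h$ itself into the stabilizer of some $T^G$-vertex, contradicting $h \in W_c^G$.

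The hard part will be precisely this final combinatorial step -- transferring the fixation condition from $g_i h g_i^{-1}$ back to $h$ -- which is a genuine group-theoretic analysis of how stabilizers of $T^G$-vertices distribute across cosets of $H$ in $G$. I expect this step to use essentially the strict inclusion $T^G \cdot h \subset X_c^G$ and not merely $h \in W_c^G \cap H$, and to rely on the minimality of the connected fundamental domain $T^G$ in controlling how its translates $T^G \cdot g_i$ fit together to form $T^H$.
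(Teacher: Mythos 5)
Your handling of the $\supseteq$ inclusion, of disjointness, and of the component-matching step (a connected $X_d^H$ disjoint from $T^H\supset T^G$ lies in a unique $X_c^G$, which must be the right one) is correct, and this is essentially all the paper records: its entire proof reads ``Clear from the definitions.'' The genuine problem is the step you defer to the end, and it is worse than hard: the claim you would need --- that $h\in H$ fixing a vertex of $T^H\setminus T^G$ must already fix a vertex of $T^G$ --- is false for a general finite-index subgroup $H$, and with it the inclusion $\subseteq$ of the lemma itself fails as stated. Take $G=\langle a\rangle*\langle b\rangle*\langle c\rangle\cong\Z_2*\Z_2*\Z_2$ acting on its Bass--Serre tree, with $T^G$ the segment joining the three vertices stabilized by $\langle a\rangle$, $\langle b\rangle$, $\langle c\rangle$, and let $H=\ker(G\to\Z_2)$, where $a\mapsto 1$ and $b,c\mapsto 0$, an index-$2$ subgroup containing $b$, $c$, $aba$, $aca$. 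The element $h=aba$ has order two; its unique fixed vertex $v$ is the one with stabilizer $a\langle b\rangle a$, which is adjacent to the vertex fixed by $\langle a\rangle$ but is not in $T^G$, and one checks that every connected fundamental domain $T^H\supset T^G$ must contain $v$. Hence $h\notin W_d^H$ for every $d$; yet $T^G.h$ is entirely contained in the component of $X\setminus T^G$ attached to the vertex fixed by $\langle a\rangle$, so $h\in W_c^G\cap H$ for that component $c$. No analysis of how stabilizers of $T^G$-vertices distribute over cosets of $H$ can therefore close your argument as written.

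The reconciliation with the paper is that the lemma is only ever applied (via Lemma~\ref{l:inclusionUH} and Proposition~\ref{p:freesubgroup}) to a \emph{free} finite-index subgroup $H$. Since the $G$-action is proper, vertex stabilizers are finite, so a torsion-free $H$ acts freely on $X$; then no nontrivial $h\in H$ fixes any point of $T^H$ at all, so $H\setminus\{id\}=\bigsqcup_d W_d^H$, and the inclusion $\subseteq$ reduces exactly to the component-matching argument you already gave (note $id\notin W_c^G$ since $T^G\not\subset X_c^G$). Moreover, only the easy inclusion $\supseteq$ is actually used downstream. So the fix is not to complete your final combinatorial step but to invoke (or add to the statement) the hypothesis that $H$ acts freely on $X$ --- at which point the lemma really is clear from the definitions, as the paper asserts.
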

\begin{proof}
	Clear from the definitions.
\end{proof}

\begin{ex}
	As an illustrative example, let us describe the situation for the classical action of $G=\PSL(2,\Z)\cong \Z_2*\Z_3$ on its Bass--Serre tree and the one of its derived subgroup.
	Let us denote by $a$ and $b$ the generators of the factors $\Z_2$ and $\Z_3$ respectively. Recall that the derived subgroup $H$, which is the kernel of the natural projection to $\Z_2\times \Z_3$ is a free subgroup of index $6$ in $G$.
	It is generated by the two commutators $[b,a]=bab^2a$ and $[b^2,a]=b^2aba$.
	The Bass--Serre tree $X$ of $G$ is formed by the cosets $\{\Z_2g,\Z_3g\}_{g\in G}$ as set of vertices, with edges given by pairs $(\Z_2g,\Z_3g)$. The tree is represented in Figure \ref{fig:BassSerrePSL}. The group $G$ acts on $X$ by right multiplication and a fundamental domain for the action is given by the edge $(\Z_2,\Z_3)$.
	One can choose as fundamental domain $T^H$ the subtree spanned by the three vertices $\Z_3,\Z_3[b,a],\Z_3[b^2,a]$, and removing any two of this vertices, as they belong to the same orbit (let us say that we keep $\Z_3$). The complement $X\setminus T^H$ has four connected components, two of them adjacent to the vertex $\Z_3$ and the other two adjacent to the vertices $\Z_3[b,a],\Z_3[b^2,a]$ respectively (and actually containing them). They refine the connected components of $X\setminus T^G$. See Figure~\ref{fig:BassSerrePSL2}.
	
	\begin{figure}[t]
		\[\includegraphics[scale=1]{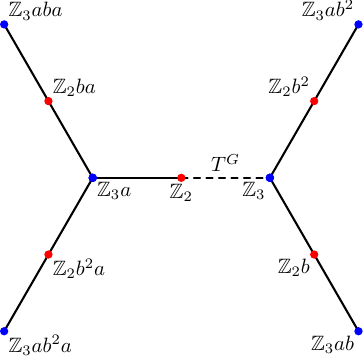}\]
		\caption{The Bass--Serre tree of $\PSL(2,\Z)$.}\label{fig:BassSerrePSL}
	\end{figure}
	
	\begin{figure}[t]
		\[\includegraphics[scale=1]{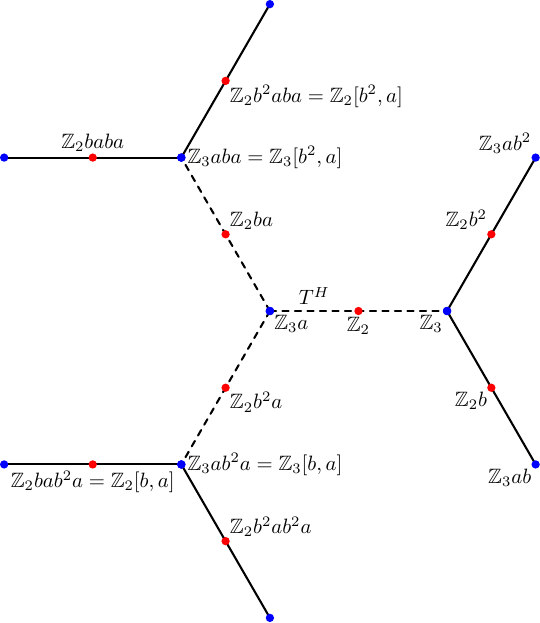}\]
		\caption{The action of the derived subgroup of $\PSL(2,\Z)$.}\label{fig:BassSerrePSL2}
	\end{figure}
\end{ex}

%%%%%%%%%%%%%%%%%%%%%%%%%%%%%%%%%%%%%%%%%%%%%%%%%%%%%%%%
\section{Topological conjugacy}\label{s:topolconj}
%%%%%%%%%%%%%%%%%%%%%%%%%%%%%%%%%%%%%%%%%%%%%%%%%%%%%%%%

In this section we formalize the notion of ping-pong partition for an action on the circle, and then we prove Theorem \ref{t:conj}, which states that the ping-pong partition determines the semi-conjugacy class of the action. The definition is similar to that appearing in \cite{BP} of \emph{basic partition}, even though the possible presence of torsion elements demands further requirements.

\subsection{Ping-pong partitions}\label{sc:definitions}

\begin{dfn}\label{d:gaps}
	Let  $\mathcal I$ be a collection of finitely many disjoint open intervals of the circle $\T$. A \emph{gap} of $\mathcal I$ is a connected component of the complement of $\bigcup_{I\in \mathcal I}I$ in $\T$. We denote by $\mathcal J$ the collection of gaps of the partition $\mathcal I$.
	
	Given a homeomorphism $g:\T\to\T$ and an interval $I\in \mathcal I$, we say that the image $g(I)$ is \emph{$\mathcal{I}$-Markovian} if it coincides with a union of intervals $I_0,\ldots,I_m\in \mathcal I$ and gaps $J_1,\ldots,J_m\in\mathcal J$ (where $m\ge 1$). We will also informally write that $g$ \emph{expands} the interval $I$.
\end{dfn}

\begin{rem}
	Unfortunately, the previous definition does not really capture the properties of the dynamics, as there is an ambiguity when $g(I)\in \cI$: sometimes we would like to consider it as a weak case of Markovian image, some other times we rather want to see it as a case of weak contraction. 
\end{rem}

For the next fundamental definition we recall our notation $\St_{\overline X}(v)=\left \{e\in E\mid o(e)=v\right \}$ for the star of a vertex $v\in V$ in the graph $\overline{X}=(V,E)$ (see \S\ref{sc:arboreal_ping-pong}).

\begin{dfn}[Ping-pong partition]\label{d:markov-partition}
	Let $G\subset \Homeo_+(\T)$ be a virtually free group and $(\alpha:G\to\Isom_+(X), T)$ a marking. Let $\cG=\{G_v,\alpha_s(A_s)s\}_{v\in V,s\in S}$ be the preferred system of generators for $G$.
	
	A \emph{ping-pong partition} for $(G,\alpha,T)$ is a collection $\Theta=\{U_v^e,V_s\}_{v\in V, e\in\St_{\overline X}(v), s\in S}$ of open subsets of the circle $\T$, satisfying the following properties.
	\begin{enumerate}[(PPP 1)]
		\item Letting $U_v=\bigcup_{e\in \St_{\overline X}(v)}U_v^e$, the family $\{U_v,V_s\}_{v\in V, s\in S}$ defines an interactive family in the sense of Definition~\ref{d:generalized_ping-pong}, with three additional requirements: for any $v\in V$
			\begin{itemize}
				\item the subsets $U_v^e$, for $e\in\St_T(v)$, are the subsets required for (IF \ref{pp5}),
				\item the subsets $U_v^s$, for $s\in S$ such that $o(s)=v$, are such that $\left (G_v\setminus \alpha_s(A_s)\right )\left (V_s\right )\subset U_v^s$, strengthening (IF \ref{pp4}), and moreover $\alpha_s(A_s)(U_v^s)=U_v^s$,
				\item the subsets $U_v^e$, for  $e\in\St_{\overline X}(v)$, are pairwise disjoint.
			\end{itemize}\label{ppp1}
		\item Every atom of $\Theta$ is the union of finitely many intervals.\label{ppp2}
		\item For every element $g\in \cG$ and every connected component $I$ of some $O\in\Theta$
		\begin{itemize}
			\item either there exists $O'\in\Theta$ such that $g(I)\subset O'$,
			\item or the image $g(I)$ is $\cI$-Markovian, where $\cI$ is the collection of connected components of elements of $\Theta$.
		\end{itemize}\label{ppp3}

	\end{enumerate}	
	In addition, if $\Theta$ defines a proper interactive family, we will say that  the ping-pong partition is \emph{proper}.
\end{dfn}

\begin{rem}\label{r:ppp5}
	Let $\mathcal J$ be the family of gaps of a ping-pong partition $\Theta$. Then it is not difficult to see that condition (PPP \ref{ppp3}) implies the following dual condition on gaps: for every $J\in\mathcal J$ and $g\in\cG$
	\begin{itemize}
		\item either there exists $O\in \Theta$ such that $g(J)\subset O$,
		\item or $g(J)\in \cJ$.
	\end{itemize}
\end{rem}

Many examples of ping-pong partitions are given in \cite[Section 3]{MarkovPartitions2}. There we give some basic constructions, for free and amalgamated products, and for HNN extensions, as well as some exotic examples of ping-pong partitions for locally discrete and minimal actions which are not of Fuchsian type. See also the brief discussion in Section \ref{s.examples}.

\subsection{Dynamics of a ping-pong partition}

\begin{prop}\label{p:MinimalSet}
	Let  $\Theta$ be a proper ping-pong partition for some marked virtually free group $(G,\alpha, T)$ of circle homeomorphisms. Then the action of $G$ on $\T$ has no finite orbit, and thus it admits a unique minimal invariant set $\Lambda$, which is either the circle or a Cantor set. Moreover, the minimal invariant set $\Lambda$ is contained in the closure of the intervals of the partition $\Theta$.
\end{prop}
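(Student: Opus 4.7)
The plan is to combine faithfulness of the action (Theorem~\ref{p:faithful}, applicable since $\Theta$ is proper) with a classical Klein ping-pong construction to first rule out finite orbits; the unique minimal set then follows from the standard dichotomy for orientation-preserving groups of circle homeomorphisms, and the inclusion $\Lambda\subset\overline{\bigcup_{O\in\Theta}O}$ is extracted from the attracting dynamics of the ping-pong elements together with the Markov structure of $\Theta$ on gaps.

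For the absence of finite orbits, I would argue by contradiction. Suppose $F\subset\T$ is a finite orbit, and let $H:=\ker(G\to\mathsf{Sym}(F))$, a finite-index normal subgroup fixing $F$ pointwise; being of finite index in $G$, which is virtually free and not virtually cyclic by the standing convention, $H$ is itself not virtually cyclic. The goal is to exhibit $a,b\in H$ playing Klein ping-pong on four pairwise disjoint open subsets of $\T$. If $|S|\geq 2$, take $s_1,s_2\in S$ with $s_2\ne s_1^{\pm 1}$: by (IF~\ref{pp2}) each $s_i$ sends every atom except $V_{\bar s_i}$ into $V_{s_i}$, and the four atoms $V_{s_1^{\pm 1}}, V_{s_2^{\pm 1}}$ are pairwise disjoint by (IF~\ref{pp1}); suitable powers of $s_1,s_2$ landing in $H$ then give the desired pair. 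If $|S|\leq 1$, the non-virtual-cyclicity of $G$ forces some vertex group $G_v$ to strictly contain an incident edge group $\alpha_e(A_e)$; torsion elements $g\in G_v\setminus\alpha_e(A_e)$, controlled by (IF~\ref{pp4},\ref{pp7},\ref{pp8}) and the strengthenings in (PPP~\ref{ppp1}), combine with each other (amalgam case $|S|=0$) or with the stable letter (HNN case $|S|=1$) to produce the free pair. Klein's lemma yields $\langle a,b\rangle\cong F_2$, and the ping-pong dynamics confines the fixed points of $a$ to $\overline{V_{s_1}}\cup\overline{V_{\bar s_1}}$ and those of $b$ to $\overline{V_{s_2}}\cup\overline{V_{\bar s_2}}$; the intersection then reduces to the finitely many shared atom boundary points, which are excluded by the missed point coming from (IF~\ref{pp10}). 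This contradicts $F\subset\mathrm{Fix}(a)\cap\mathrm{Fix}(b)$.

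Once finite orbits are excluded, the classical dichotomy yields a unique minimal invariant compact $\Lambda\subset\T$, either the circle or a Cantor set. Set $K:=\overline{\bigcup_{O\in\Theta}O}$ and $B:=\T\setminus K$, a finite union of open gap interiors. The element $a=s_1^N$ above has an attracting fixed point $p^+\in\overline{V_{s_1}}\subset K$, obtained as $\lim a^n(x)$ for any $x\notin\overline{V_{\bar s_1}}$; taking $x\in\Lambda\setminus\overline{V_{\bar s_1}}$ (which exists as $\Lambda$ is infinite) and using invariance plus closedness of $\Lambda$ yields $p^+\in\Lambda\cap K$, so $\Lambda=\overline{G\cdot p^+}$ by minimality. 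To conclude $\Lambda\subset K$, I would build the finite oriented \emph{gap graph} $\Gamma$ with vertex set $\cJ$ and an edge $J\xrightarrow{g} g(J)$ for each $g\in\cG$ with $g(J)\in\cJ$ (the alternative, by Remark~\ref{r:ppp5}, being $g(J)\subset O$ for some atom). A directed cycle through $J$ would give $h\in G$ with $h(J)=J$; orientation-preservation then forces $h$ to fix the two endpoints of $J$, whose $G$-orbit would be finite (the $G$-stabilizer of $J$ has finite index, as $G\cdot J\subset\cJ$ is finite), contradicting the previous step. Hence $\Gamma$ is a DAG, and a would-be $\Lambda$-point $p\in J^\circ$ would, by minimality and density, be revisited by infinitely many $g\in G$ with $gp\in J^\circ$; the DAG structure forces all such $g$ to stabilize $J$ setwise, again producing a finite $G$-orbit of its endpoints, which is impossible.

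The hardest step is the ping-pong construction in the low-complexity cases $|S|\leq 1$, where torsion vertex-group elements must play the role of hyperbolic Schottky generators; here (IF~\ref{pp10}) and the strengthening of (PPP~\ref{ppp1}) are crucial to ensure genuine freeness and to eliminate the spurious fixed points in gap interiors. The DAG argument in the inclusion step also requires a careful combinatorial analysis of how words in $\cG$ navigate between atoms and gaps.
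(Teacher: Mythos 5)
Your argument is much longer than, and structurally different from, the paper's: the paper disposes of finite orbits in one stroke by noting that a finite orbit would yield an invariant Borel probability measure, which is incompatible with the inclusion relations of a proper interactive family (or, alternatively, by passing to a finite-index free subgroup and invoking the classical ping-pong theory). More importantly, your route has concrete gaps. The localization $\mathrm{Fix}(a)\subset\overline{V_{s_1}}\cup\overline{V_{\bar s_1}}$ for $a=s_1^N$ is unjustified: (IF~\ref{pp2}) only controls images of the \emph{atoms}, which do not cover $\T$, so a fixed point of $a$ may sit in the interior of a gap (since $\cJ$ is finite and, by Remark~\ref{r:ppp5}, a generator sends a gap either into an atom or onto another gap, a gap can be $s_1$-periodic). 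Your appeal to (IF~\ref{pp10}) does not repair this: the missed point of (IF~\ref{pp10}) lives in some $X_w$ and says nothing about gap interiors. Even granting the localization, the two closures $\overline{V_{s_1}}\cup\overline{V_{\bar s_1}}$ and $\overline{V_{s_2}}\cup\overline{V_{\bar s_2}}$ may share boundary points, and nothing excludes that the finite orbit consists exactly of those points; since a pair generating $F_2$ inside $\Homeo_+(\T)$ can perfectly well share a fixed point, freeness alone gives no contradiction, so this step is where the whole argument must do real work and it is missing. The cases $|S|\le 1$ are only gestured at.

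The ``Moreover'' part also contains an error. The $G$-orbit of a gap $J$ is \emph{not} contained in $\cJ$: a generator may send $J$ inside an atom, and from there the Markovian expansions of (PPP~\ref{ppp3}) scatter the orbit, so ``the $G$-stabilizer of $J$ has finite index'' is false in general and the endpoints of $J$ need not have finite orbit. Likewise, an element $g$ with $g(p)\in J^{\circ}$ need not stabilize $J$ setwise, because the orbit of $p$ can leave the gaps, travel through atoms, and re-enter a gap interior; the acyclicity of your gap graph only constrains the words all of whose intermediate images stay in $\cJ$. So the concluding DAG argument does not close either.
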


\begin{proof}
	The fact that a minimal invariant set $\Lambda$ never is a finite orbit can be obtained by showing that the action of $G$ on $\T$ does not preserve a Borel probability measure, and this can be shown by using the inclusion relations of a proper interactive family.\footnote{Alternatively, one can pass to a finite-index subgroup $H\subset G$ which is free, and refine the partition $\Theta$ to obtain a ping-pong partition $\Theta_H$ for $H$. Then we get a ping-pong partition for a free group (in the classical sense), and the statement follows easily.}
\end{proof}

The following observation will be very useful in the rest of the section.

\begin{lem}\label{l:Markovian_image}
	Let $\Theta=\{U^e_v,V_s\}$ be a ping-pong partition for a marked virtually free group $(G,\alpha,T)$ of circle homeomorphisms, with collection of intervals $\cI$.
	\begin{enumerate}[1.]
		\item For a vertex $v\in V$ and edge $e\in\St_{\overline X}(v)$, let $I\subset U^e_v$ be a connected component of the partition, let $g\in\cG$ be a generator, and assume that $g(I)$ is $\cI$-Markovian. Then $g\in G_v$.
		\item For a complementary edge $s\in S$, let $I\subset V_s$ be a connected component of the partition, let $g\in\cG$ be a generator, and assume that $g(I)$ is $\cI$-Markovian. Then $g\in \alpha_{\overline s}(A_{\overline s})\overline s$.
	\end{enumerate}
\end{lem}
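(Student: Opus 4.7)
The plan is to prove the contrapositive: I will show that if $g \in \cG$ is not of the asserted form, then $g(I)$ is contained in a single atom of $\Theta$. Since atoms of a ping-pong partition are finite disjoint unions of open intervals of $\cI$ and $g(I)$ is a connected open interval (the homeomorphic image of $I$), it would then lie inside a single interval of $\cI$. This contradicts the $\cI$-Markovian hypothesis, which forces $g(I)$ to cover at least two intervals of $\cI$ separated by a gap.

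For part (1), with $I \subset U_v^e$, the generators to rule out are (a) elements $g \in G_w$ with $w \ne v$, and (b) products $g = a \cdot t$ with $a \in \alpha_t(A_t)$, $t \in S$. Type (b) is immediate: by (IF \ref{pp2}), $t(U_v) \subset V_t$ (as $U_v \ne V_{\overline t}$), and by (IF \ref{pp3}), $a(V_t) \subset V_t$, so $g(I) \subset V_t$, a single atom. For type (a), I would induct along the geodesic $e_0, \ldots, e_k$ in $T$ from $w$ to $v$. Using the identifications $\alpha_{e_j}(A_{e_j}) = \omega_{e_j}(A_{e_j})$ implied by the relations $e_j = id$ in the presentation of $G$, I regard $g$ as lying in successive vertex groups along the path; if $g$ never leaves an edge subgroup then the path terminates at $v$ and $g \in G_v$, contradicting the hypothesis; otherwise, at the first edge $e_i$ where $g \notin \alpha_{e_i}(A_{e_i})$, condition (IF \ref{pp7}) (applicable since $v \in C(e_i, T)$) gives $g(U_v) \subset U_{o(e_i)}^{e_i}$, a single atom.

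For part (2), with $I \subset V_s$, I would rule out $g \in G_w$ for every $w$ and $g = a \cdot t$ for every $t \ne \overline s$. For $g \in G_{o(s)}$, condition (IF \ref{pp3}) handles $g \in \alpha_s(A_s)$ and the strengthening of (IF \ref{pp4}) in (PPP \ref{ppp1}) handles $g \in G_{o(s)} \setminus \alpha_s(A_s)$, yielding in both cases that $g(V_s)$ lies in a single atom. For $g \in G_w$ with $w \ne o(s)$, I would iterate along the geodesic from $w$ to $o(s)$, this time invoking (IF \ref{pp8}) in place of (IF \ref{pp7}), with the role of $v \in C(e_i,T)$ played by $o(s) \in C(e_i, T)$. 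For $g = a \cdot t$ with $t \ne \overline s$, since $V_s \ne V_{\overline t}$, (IF \ref{pp2}) gives $t(V_s) \subset V_t$, and (IF \ref{pp3}) gives $a(V_t) \subset V_t$, so $g(V_s) \subset V_t$, again a single atom. The only remaining possibility is $t = \overline s$, whence $g \in \alpha_{\overline s}(A_{\overline s}) \overline s$, as asserted.

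The argument is essentially careful bookkeeping of inclusions supplied by the interactive-family axioms, together with the elementary fact that a connected open interval contained in a finite disjoint union of open intervals must lie in one of them. The only mildly delicate point is the path iteration, where one must justify that the identification $\alpha_{e_j} = \omega_{e_j}$ on $A_{e_j}$ lets us reinterpret $g$ as an element of the next vertex group at each step, and that fixing every edge of the path forces membership in the terminal vertex group.
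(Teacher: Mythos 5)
Your proof is correct and follows essentially the same route as the paper's: the paper argues in one line that, by the interactive-family axioms (IF 2, 7) (resp.\ (IF 2, 4, 8)), any generator not of the asserted form sends $I$ into a single atom, contradicting that $g(I)$ is $\cI$-Markovian. Your write-up simply fills in the case analysis over the preferred generating set (including the geodesic iteration through edge groups and the use of (IF 3)) that the paper leaves implicit.
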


\begin{proof}
	We only detail the first part of the statement, the second one is analogous (using properties (IF \ref{pp2},\ref{pp4},\ref{pp8})).
	Assume $I\subset U^e_v$ for some vertex $v\in V$. Since $\Theta$ defines an interactive family by (PPP~\ref{ppp1}), then by (IF \ref{pp2},\ref{pp7}), an element which does not belong to the vertex group $G_v$ must send $I$ inside a connected component of some other $U_w$ or $V_s$, contradicting that $g$ expands $I$. So we must have $g\in G_v$.
\end{proof}

\begin{rem}\label{r.gen}
It follows from Lemma \ref{l:Markovian_image} that if two generators $g_1,g_2\in\cG$ and an interval $I\in\mathcal I$ are such that $g_1(I)$ and $g_2(I)$ are $\mathcal I$-Markovian then $g_1 g_2^{-1}$ is a generator, namely $g_1 g_2^{-1}\in\cG$.
\end{rem}

We deduce directly the following generalization of Remark~\ref{r:ppp5}.

\begin{lem}\label{l:images_gaps}
	With notation as in Lemma~\ref{l:Markovian_image}, let $\cJ$ denote the collection of gaps of $\cI$. Then for every generators $g_1,g_2\in \cG$ and gaps $J_1,J_2\in\cJ$, we have that the images $g_i(J_i)$ either coincide or are disjoint.
\end{lem}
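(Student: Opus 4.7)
The approach is a case analysis based on Remark~\ref{r:ppp5}, which says that for $g\in\cG$ and $J\in\cJ$, either $g(J)\in\cJ$ or $g(J)$ is contained in some atom $O\in\Theta$, hence (being connected) in a single interval $I\in\cI$. If both $g_1(J_1)$ and $g_2(J_2)$ lie in $\cJ$, they are either equal or disjoint, as distinct gaps are distinct connected components of $\T\setminus\bigcup\cI$. In the mixed case, one image lies in $\bigcup\cJ$ and the other in $\bigcup\cI$, so they are disjoint. If both lie in intervals, $g_i(J_i)\subset I_i\in\cI$ with $I_1\neq I_2$, again they are disjoint. The only substantive case is therefore $g_1(J_1),g_2(J_2)\subset I$ for a common interval $I\in\cI$.

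In this case I would invoke Remark~\ref{r.gen}. First, note that $\cG$ is closed under inverses: each vertex group is a group, while $(\alpha_s(A_s)s)^{-1}=\alpha_{\overline{s}}(A_{\overline{s}})\overline{s}$ in light of the Bass-Serre relation $s^{-1}\alpha_s(a)s=\omega_s(a)$. In particular $g_1^{-1},g_2^{-1}\in\cG$. For each $i$, the preimage $g_i^{-1}(I)$ contains the gap $J_i$, so it cannot be contained in any atom (atoms being unions of intervals of $\cI$, disjoint from $\bigcup\cJ$). Property~(PPP~\ref{ppp3}) then forces $g_i^{-1}(I)$ to be $\cI$-Markovian, and Remark~\ref{r.gen} yields $h:=g_1^{-1}g_2\in\cG$.

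Now suppose $g_1(J_1)\cap g_2(J_2)\neq\emptyset$; applying $g_1^{-1}$ this becomes $J_1\cap h(J_2)\neq\emptyset$. Again by Remark~\ref{r:ppp5}, $h(J_2)$ is either a gap in $\cJ$ or contained in some atom. The latter would place $h(J_2)\subset\bigcup\cI$, disjoint from the gap $J_1$, contradicting the non-empty intersection. Hence $h(J_2)\in\cJ$; since two gaps of $\cJ$ sharing a point must coincide, $h(J_2)=J_1$. Applying $g_1$ yields the desired equality $g_2(J_2)=g_1(J_1)$. The main difficulty is the last case of the case analysis, resolved by the algebraic closure property of $\cG$ supplied by Remark~\ref{r.gen}; the earlier cases follow from the elementary dichotomy between intervals and gaps.
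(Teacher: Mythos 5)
Your proof is correct and follows essentially the same route as the paper's: reduce via Remark~\ref{r:ppp5} to the case where both images lie in a common interval $I\in\cI$, use (PPP~\ref{ppp3}) and Remark~\ref{r.gen} to obtain $h=g_1^{-1}g_2\in\cG$, and then apply Remark~\ref{r:ppp5} to $h$ to conclude. The only cosmetic difference is that you get the $\cI$-Markovianity of $g_i^{-1}(I)$ directly from (PPP~\ref{ppp3}) after checking that $\cG$ is symmetric, whereas the paper reaches the same conclusion via the intervals adjacent to the gaps $J_i$.
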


\begin{proof}
	Suppose that the intersection  $g_1(J_1)\cap g_2(J_2)$ is not empty, but we do not have equality $g_1(J_1)=g_2(J_2)$. After Remark \ref{r:ppp5}, there exists $O\in \Theta$ (more precisely a connected component $I\subset O$), such that $g_1(J_1),g_2(J_2)\subset I\subset O$. Observe that both inclusions must be proper, as gaps are closed and the connected component $I$ is open. Thus, denoting by $I_i^-,I_i^+$ the connected components of atoms of $\Theta$ that are respectively left and right adjacent to the gap $J_i$, we must have by (PPP \ref{ppp3}) that also $g_i(I_i^\pm)\subset I$.
	We deduce that the inverse images $g_1^{-1}(I),g_2^{-1}(I)$ are $\cI$-Markovian, therefore by Remark \ref{r.gen} we must have $g_2^{-1}g_1=:h\in\cG$. We deduce that the image $h(J_1)$ intersects $J_2$, but does not equal $J_2$, and this contradicts Remark~\ref{r:ppp5}.
\end{proof}

The following lemma is a rephrasing of Lemma \ref{l:Markovian_image}, just passing to the inverses.

\begin{lem}\label{l:images_gaps_gen}
	With notation as in Lemma~\ref{l:images_gaps}, let $J\in\cJ$ be a gap and $g\in\cG$ a generator.
	\begin{enumerate}[1.]
		\item For a vertex $v\in V$ and edge$e\in\St_{\overline X}(v)$, assume $g(J)\subset U^e_v$. Then $g\in G_v$.
		\item For a complementary edge $s\in S$, assume $g(J)\subset V_s$. Then $g\in \alpha_{s}(A_{s})s$.
	\end{enumerate}
\end{lem}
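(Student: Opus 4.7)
The author's remark that this is just Lemma~\ref{l:Markovian_image} read through the inverse points to the natural strategy: given $g\in\cG$ with $g(J)\subset U_v^e$ (resp.\ $g(J)\subset V_s$), set $h:=g^{-1}\in\cG$, and observe that $h(U_v^e)$ is an open neighborhood of $J$. Hence some connected component $I'\subset U_v^e$ (resp.\ $I'\subset V_s$), which by definition belongs to the family $\cI$, satisfies $h(I')\supset J$. I will show that $h(I')$ is $\cI$-Markovian, so that Lemma~\ref{l:Markovian_image} applies and pins down $h$, from which the conclusion about $g$ follows.

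To see that $h(I')$ is Markovian, first apply (PPP~\ref{ppp3}) to the generator $h$ and the component $I'$: either $h(I')$ is contained in some atom $O\in\Theta$, or it is $\cI$-Markovian. The first alternative is incompatible with $J\subset h(I')$, since a gap is disjoint from every atom of $\Theta$. Therefore $h(I')$ is Markovian, as required. Applying Lemma~\ref{l:Markovian_image}(1) in the case $I'\subset U_v^e$ yields $h\in G_v$, and since $G_v$ is a subgroup we get $g=h^{-1}\in G_v$, proving part~1.

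For part~2, the same argument with $I'\subset V_s$ gives $h(I')$ is Markovian, and Lemma~\ref{l:Markovian_image}(2) yields $h\in \alpha_{\overline s}(A_{\overline s})\,\overline s$. Writing $h=\alpha_{\overline s}(a)\,\overline s$ with $a\in A_{\overline s}=A_s$, we invert: $g=h^{-1}=\overline s^{\,-1}\alpha_{\overline s}(a^{-1})=s\,\alpha_{\overline s}(a^{-1})$. Using the HNN-type relation $s^{-1}\alpha_s(a')\,s=\alpha_{\overline s}(a')$ from the presentation \eqref{eq:pres_fundamental_group} (i.e.\ $\alpha_s(a')=s\,\alpha_{\overline s}(a')\,s^{-1}$), applied with $a'=a^{-1}$, this becomes $g=\alpha_s(a^{-1})\,s\in\alpha_s(A_s)\,s$, as claimed.

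No serious obstacle is anticipated: the only non-routine ingredient is the elimination of the ``contained in an atom'' alternative, which is immediate from the disjointness of gaps and atoms, and the final algebraic identification in part~2 is a one-line use of the edge relation in a graph of groups. Everything else is bookkeeping that leverages Lemma~\ref{l:Markovian_image} directly.
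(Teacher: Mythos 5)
Your proof is correct and takes the same route as the paper, which disposes of this lemma in one line by saying it is Lemma~\ref{l:Markovian_image} ``passing to the inverses.'' The details you supply --- choosing the component $I'$ with $J\subset g^{-1}(I')$, invoking (PPP~\ref{ppp3}) together with the disjointness of gaps from atoms to force the Markovian alternative, and using the edge relation to turn $\alpha_{\overline s}(A_{\overline s})\overline s$ into $\alpha_s(A_s)s$ --- all check out.
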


\subsection{Refinement and its properties}\label{sc:refinement_ping-pong}

Let $\Theta=\{U^e_v,V_s\}$ be a ping-pong partition for a marked virtually free group $(G,\alpha,T)$ of circle homeomorphisms, with collection of intervals $\cI$ and gaps $\cJ$. We still denote by $\cG=\{G_v,\alpha_s(A_s)s\}$ the preferred generating system of $G$ and consider the collection $\widetilde{\cJ}:=\cG(\cJ)$ of images of gaps. We introduce the \emph{refined partition} $\wTh=\{\widetilde U^e_v,\widetilde V_s\}$ as follows: for any atom $O\in \Theta$, define $\wO:=O\setminus \bigcup_{\widetilde{J}\in\widetilde{\cJ}}\widetilde J$. In other words, 
$\wTh$ is the partition whose family of gaps is the collection of images $\widetilde{\cJ}$.
We have the following fundamental result.

\begin{prop}\label{p:refinement_is_ping-pong}
	With notation as above, the refined partition $\wTh=\{\widetilde U^e_v,\widetilde V_s\}$ is a ping-pong partition.
\end{prop}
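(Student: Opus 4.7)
The plan is to verify the three conditions (PPP~\ref{ppp1}), (PPP~\ref{ppp2}), (PPP~\ref{ppp3}) for $\wTh$ separately. The condition (PPP~\ref{ppp2}) is the easiest and I would start with it: since the vertex groups $G_v$ are finite (the action $\alpha$ being proper) and the edge orbits $\alpha_s(A_s)s$ are finite (the $A_s$ being finite), the generating set $\cG$ is finite; as $\cJ$ is finite too, $\widetilde{\cJ}=\cG(\cJ)$ is a finite collection of closed intervals, so subtracting them from each open atom $O$ yields a finite disjoint union of open intervals in $\wO$. Along the way I would record the useful observation that $\cJ\subset\widetilde\cJ$ (using $\mathrm{id}\in G_v\subset\cG$), so the refined gaps contain the original ones, and the refined partition is genuinely finer than $\Theta$.

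For (PPP~\ref{ppp1}) my plan is to inherit the interactive family structure directly from $\Theta$: every inclusion of the form $g(O_1)\subset O_2$ in the old interactive family automatically gives $g(\widetilde O_1)\subset O_2$, and the only missing step is to upgrade the target to $\widetilde O_2$. For this I would verify that whenever $g\in \cG$ sends a connected component $\widetilde I\subset\widetilde O_1$ into $O_2$ via the interactive-family inclusions, the image cannot meet any removed gap $\widetilde J' = h'(J')\subset O_2$ in $\widetilde{\cJ}$: indeed if it did, pulling back by $g$ would produce a closed subinterval of $I\subset O_1$ which by Lemma~\ref{l:images_gaps} and Lemma~\ref{l:images_gaps_gen} must coincide with one of the removed $\widetilde J\subset I$, contradicting $\widetilde I\cap \widetilde J=\emptyset$. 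The strengthened conditions listed under (PPP~\ref{ppp1}) (in particular $\alpha_s(A_s)(\widetilde U_v^s)=\widetilde U_v^s$) transfer the same way, using the fact that $\alpha_s(A_s)$ preserves $V_s$ and $U_v^s$ in $\Theta$.

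The core of the proof is (PPP~\ref{ppp3}). Let $g\in\cG$ and let $\widetilde I$ be a connected component of some $\wO\in\wTh$, contained in a connected component $I\subset O$. By the original (PPP~\ref{ppp3}) applied to $I$, either $g(I)\subset O'$ for some $O'\in\Theta$, or $g(I)$ is $\cI$-Markovian. In the first case $g(I)$ lies in a single connected component $I'$ of $O'$ (by connectedness), and I would prove $g(\widetilde I)\subset \widetilde I'$ for a single component $\widetilde I'$ of $\widetilde O'$: if $g(\widetilde I)$ met some refined gap $\widetilde J'\subset I'$, the preimage $g^{-1}(\widetilde J')$ would be a closed connected subset of $I$ that intersects $\widetilde I$, and again Lemmas~\ref{l:images_gaps} and~\ref{l:images_gaps_gen} would force it to coincide with a removed gap $\widetilde J\subset I$, contradicting $\widetilde I\cap \widetilde J=\emptyset$. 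In the second case $g(I)$ writes as an alternating concatenation $I_0\cup J_1\cup I_1\cup\cdots\cup J_m\cup I_m$, and $g(\widetilde I)$ is obtained by removing from $g(I)$ precisely the images $g(\widetilde J)$ of the removed gaps $\widetilde J\subset I$; these fit naturally inside refined gaps of $\widetilde\cI$, and together with the already-present $J_k$ they realize $g(\widetilde I)$ as a $\widetilde\cI$-Markovian concatenation.

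The delicate point, and what I expect to be the main obstacle, is exactly this identification of preimages and images of refined gaps with elements of $\widetilde\cJ$: the set $\widetilde\cJ=\cG(\cJ)$ records only first-step images, and $gh(J)$ for $g,h\in\cG$ need not a priori factor as $h'(J')$ with $h'\in\cG$. The argument needs to exploit that the only way a generator image $g(\widetilde I)$ can meet an unexpected gap is if $g^{-1}$ maps that gap to a set coincident with one of the $\widetilde J\subset I$, and this coincidence is forced by the rigidity of the interactive family (IF~\ref{pp2}--\ref{pp8}) together with the ``disjoint or equal'' dichotomy of Lemma~\ref{l:images_gaps}. Once this is established, the dual Markovian behaviour on gaps (Remark~\ref{r:ppp5}) for $\wTh$ follows formally, completing the verification.
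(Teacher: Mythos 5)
Your overall plan is sound and, for (PPP~\ref{ppp1}) and (PPP~\ref{ppp2}), essentially reproduces the paper's argument: the paper verifies (PPP~\ref{ppp1}) by first computing $\widetilde V_s=V_s\setminus \alpha_s(A_s)s\bigl(\bigcup_{J\in\cJ}J\bigr)$ and $\widetilde U_v^e=U_v^e\setminus G_v\bigl(\bigcup_{J\in\cJ}J\bigr)$ via Lemma~\ref{l:images_gaps_gen}, and then writes each potentially offending intersection as $g\bigl(\wO\cap g^{-1}h(J)\bigr)$ with $g^{-1}h\in\cG$, which is the precise form of your ``pull back and identify with a removed gap'' step. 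For (PPP~\ref{ppp3}) you take a genuinely different route: you track intervals and gaps directly through the two cases of the old (PPP~\ref{ppp3}), whereas the paper works with the endpoint sets $\Delta$ and $\widetilde\Delta=\cG(\Delta)$ and proves two claims (consecutive endpoints of a refined interval are simultaneously sent into $\Delta$ by a single generator; if $g(\widetilde I)$ meets $\widetilde\Delta$ then both endpoints of $g(\widetilde I)$ lie in $\widetilde\Delta$). Both routes bottom out in the same mechanism, namely Remark~\ref{r.gen} (two generators whose images of the same interval are $\cI$-Markovian differ by a generator), and you correctly flag this as the crux. Two points where your write-up needs tightening when fleshed out. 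First, in your Case~2 the images $g(\widetilde J)=gh(J)$ of the removed gaps must be shown to \emph{be} elements of $\widetilde\cJ$, not merely to ``fit naturally inside'' refined gaps: if $gh(J)$ were properly contained in a refined gap, the endpoint of $g(\widetilde I)$ would sit in the interior of that gap and Markovianity would fail; the fix is exactly that $gh\in\cG$ by Remark~\ref{r.gen} combined with Lemmas~\ref{l:Markovian_image} and~\ref{l:images_gaps_gen}, followed by the disjoint-or-equal dichotomy of Lemma~\ref{l:images_gaps}. Second, your case split on $g(I)$ does not align with the dichotomy required for $g(\widetilde I)$: even when $g(I)$ is $\cI$-Markovian, a component $\widetilde I$ near the boundary of $I$ may have $g(\widetilde I)$ contained in a single refined atom (or in a refined gap, which must then be excluded separately, as the paper does at the end of its proof); so the conclusion of your Case~2 should be the full either/or of (PPP~\ref{ppp3}), not unconditional Markovianity. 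Neither issue is fatal, and the endpoint-based organization of the paper is arguably cleaner precisely because it handles these alignment questions uniformly.
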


\begin{proof}
	Let us start verifying (PPP~\ref{ppp1}): we have to check requirements (IF~\ref{pp1}--\ref{pp8}) and three additional properties.
	
	Observe that every new atom $\wO\in \wTh$ is obtained from the corresponding open subset $O\in \Theta$ by removing finitely many proper closed subsets, so if $O$ is nonempty, so is $\widetilde O$. This proves (IF~\ref{pp1},\ref{pp5}) and also the first and last extra requirements appearing in (PPP~\ref{ppp1}).
	
	We next consider the block of requirements (IF~\ref{pp2}--\ref{pp4}).
	For this, fix $s\in S$; observe that after Lemma~\ref{l:images_gaps_gen} and the relations in \eqref{eq:pres_fundamental_group}, we have
	\begin{equation}\label{eq:tVs}
	\widetilde V_s=V_s\setminus \alpha_s(A_s)s\left (\bigcup_{J\in\cJ}J\right )=V_s\setminus s\alpha_{\overline s}(A_{\overline s})\left (\bigcup_{J\in\cJ}J\right ).
	\end{equation}
	 Take $\wO\in\wTh\setminus \{\widetilde V_{\overline s}\}$, by (IF~\ref{pp2}) for $\Theta$ we have $s(\wO)\subset V_s$ and we want to verify that $s(\wO)$ intersects no image of gap in $V_s$. After \eqref{eq:tVs} such an image is of the form $sa(J)$ for some gap $J\in \cJ$ and element $a\in \alpha_{\overline s}(A_{\overline s})$; then we write
	 \[
	 s(\wO)\cap sa (J)=s\left (\wO\cap a(J)\right )
	 \]
	 and we observe that such intersection must be empty, since $a(J)\in \widetilde{\cJ}$ and because the atom $\wO$ is defined in such a way that it has empty intersection with every new gap in $\widetilde{\cJ}$. This proves (IF \ref{pp2}).
	 Property (IF \ref{pp3}) follows immediately from (IF \ref{pp3}) for $\Theta$ and \eqref{eq:tVs}. We are left to verify the strengthened version of (IF~\ref{pp4}) appearing in (PPP~\ref{ppp1}). After (PPP~\ref{ppp1}) for $\Theta$, we know that
	 $\left (G_{o(s)}\setminus \alpha_{s}(A_s)\right )(\widetilde V_s)\subset U_{o(s)}^s$ and, similarly as done for (IF \ref{pp2}), we want to verify that such images intersect no new gap $\widetilde J\in \widetilde{\cJ}$ in $U_{o(s)}^s$. For simplicity we write $v=o(s)$. After Lemma~\ref{l:images_gaps_gen} there exist an element $h\in G_v$ and a gap $J\in \cJ$ such that $h(J)=\widetilde J$. Take an element $g\in G_{v}\setminus \alpha_{s}(A_s)$ and write
	 \[
	 g(\widetilde V_s)\cap h(J)=g\left (\widetilde V_s\cap g^{-1}h(J)\right );
	 \]
	 observe that $g^{-1}h\in G_v$, thus $g^{-1}h(J)\in\widetilde{\cJ}$ and therefore the intersection $\widetilde V_s\cap g^{-1}h(J)$ is empty. This proves the strengthened version of (IF~\ref{pp4}).
	 Finally, for any vertex $v\in V$ and edge $e\in\St_{\overline X}(v)$, after Lemma \ref{l:images_gaps_gen} we have 
	 \begin{equation}\label{eq:tUev}
	 \widetilde U_v^e=U_v^e\setminus G_v\left (\bigcup_{J\in\cJ}J\right ).
	 \end{equation}
	 When $e\in S$ is a complementary edge, expression \eqref{eq:tUev} above and the second extra condition in (PPP~\ref{ppp1}) for $\Theta$ give the second extra condition in (PPP~\ref{ppp1}) for $\wTh$. When $e\in E_T$ is an edge of the spanning tree, the same argument proves (IF~\ref{pp6}) for $\wTh$.
	 Finally, the last two requirements (IF~\ref{pp7},\ref{pp8}) are also proved with arguments analogous to those for (IF~\ref{pp4}). This proves (PPP \ref{ppp1}).
	 
	 That $\wTh$ satisfies (PPP \ref{ppp2}) is obvious from the definition of $\wTh$. We must check (PPP \ref{ppp3}).
	 
	 In the following, we denote by $\Delta\subset \T$ the collection of endpoints of connected components $I\in\cI$ (equivalently of all $J\in \cJ$), and similarly $\widetilde{\Delta}\subset \T$ is the collection of endpoints for the refined partition $\wTh$. Note that as $\widetilde{\cJ}=\cG(\cJ)$ we have the analogous relation 
	 \begin{equation}\label{eq:refinement_Delta}
	 \widetilde{\Delta}=\cG(\Delta).
	 \end{equation}

\begin{claim1}
\label{l.consecutif}
Let $x,y\in\widetilde{\Delta}$ be the endpoints of an interval $\widetilde I$ of the refined partition $\widetilde \Theta$. Then there exists a generator $g\in\cG$ such that
$g(x)$ and $g(y)$ are both in $\Delta$, endpoints of (possibly distinct) intervals of $\Theta$.
\end{claim1}

\begin{proof}[Proof of Claim]
Without loss of generality we can assume that $x,y$ are respectively the left and right endpoints of $\widetilde I$. If $x,y\in\Delta$, we take $g=\mathrm{id}$ (the identity belongs to the generating set $\cG$ by definition). Now assume that only one of the two points belongs to $\Delta$, for example $x\in\Delta$ and $y\in\widetilde\Delta\setminus\Delta$. By definition there exists an interval $I\in\mathcal I$ that contains $\widetilde I\cup \{y\}\subset I$.  By equality \eqref{eq:refinement_Delta} there exists a generator $g\in\cG$ such that $g(y)\in\Delta$. Since $g(I)$ contains an element of $\Delta$ it must be $\mathcal I$-Markovian by (PPP \ref{ppp3}) for the partition $\Theta$. We deduce that $g(x)\in\Delta$. 

Now assume that both $x,y\in\widetilde\Delta\setminus\Delta$.
Under this assumption, there exists an interval $I\in\mathcal I$ that contains the closure of the interval $\widetilde I$.
Take generators $g_1,g_2\in\cG$ such that $g_1(x),g_2(y)\in\Delta$ and assume that $g_1(y)\notin\Delta$ and $g_2(x)\notin\Delta$.
Arguing as in the previous case, we have that both images $g_1(I)$ and $g_2(I)$ are $\mathcal I$-Markovian and thus by Remark \ref{r.gen}, the composition $g=g_2 g_1^{-1}$ belongs to $\cG$.
Moreover, since the interval $\widetilde{I}$ contains no point of $\widetilde{\Delta}=\mathcal{G}(\Delta)$, there exist two intervals $I_1,I_2\in\mathcal I$, with the following properties:
\begin{itemize}
	\item $g_1(x)$ is the leftmost point of $I_1$;
	\item $g_2(y)$ is the rightmost point of $I_2$;
	\item $g_1(y)\in I_1$ and $g_2(x)\in I_2$.
\end{itemize}
The first two properties give that the intersection $g_1^{-1}(I_1)\cap g_2^{-1}(I_2)$ equals $\widetilde I$, so $g(I_1)=g_2g_1^{-1}(I_1)$ has nontrivial intersection with $I_2$. However the third property implies that $g(I_1)=g_2g_1^{-1}(I_1)$ neither contains, nor is contained inside,   $I_2$. As $g\in\cG$ is a generator, this contradicts condition (PPP~\ref{ppp3}) for $\Theta$.
\end{proof}

\begin{claim1}
\label{l.refinment_Markov}
Let $x,y\in\widetilde\Delta$ be the endpoints of an interval $\widetilde I$ of $\widetilde\Theta$ and let $g\in\cG$ be  a generator. Assume that
$g(\widetilde I)\cap\widetilde\Delta\neq\emptyset$.
Then $g(x),g(y)\in\widetilde\Delta$.
\end{claim1}

\begin{proof}[Proof of Claim]
If $x,y\in\Delta$ then for every generator $g\in\mathcal G$ we have $g(x),g(y)\in\mathcal G(\Delta)=\widetilde{\Delta}$. So we will assume below that $x$ or $y$ is not in $\Delta$. Without loss of generality, we can assume $x\notin \Delta$.
Let $I\in\cI$ be the interval containing $\widetilde I$. Assume without loss of generality that $I\subset U^e_v$, the case $I\subset V_s$ being analogous.

By Claim \ref{l.consecutif} there exists a generator $h\in\cG$ such that $h(x),h(y)\in\Delta$. In particular the image $h(I)$ is $\mathcal I$-Markovian (because $x\in \widetilde{\Delta}\setminus \Delta$) and $h\in G_v$ by Lemma \ref{l:Markovian_image}.
On the other hand we have already proved the condition (PPP~\ref{ppp1}) that $\widetilde\Theta$ defines an interactive family  and, after the hypothesis $g(\widetilde I)\cap\widetilde\Delta\neq\emptyset$, the image $g(I)$ cannot be included inside a connected component of some other $U^f_w$ or $V_s$ (we use the same argument as in the proof of Lemma \ref{l:Markovian_image}). So $g\in G_v$ and we have $g'\in G_v\subset \cG$ where
$g'=gh^{-1}$.
So we have $g(x)=g'(h(x))\in g'(\Delta)\subset \widetilde\Delta$ and, similarly, $g(y)\in\widetilde\Delta$, thus proving the claim.
\end{proof}

We can now conclude. Let $\widetilde I\in\widetilde{\cI}$ be a connected component of the refined partition, and let $g\in\cG$ be a generator. Assume that $g(\widetilde I)$ is contained in no atom $\widetilde{O}\in\wTh$, we want to prove that the image $g(\widetilde I)$ is $\widetilde{\cI}$-Markovian. If $g(\widetilde I)$ contains a point of $\widetilde{\Delta}$, then we conclude by Claim~\ref{l.refinment_Markov}. It remains to exclude the case that $g(\widetilde I)$ is contained in a gap $\widetilde J\in\widetilde{\cJ}$, but this is done by repeating our previous arguments (or using (PPP~\ref{ppp1})). Indeed, assume $g(\widetilde I)\subset\widetilde J$; using (PPP \ref{ppp3}) for $\Theta$ we have that $\widetilde J$ is a ``new'' gap (that is, $\widetilde J\in\widetilde{\cJ}\setminus \cJ$), so there exists an interval $I\in \cI$ such that $\widetilde J\subset I$. Assume $I\subset U_v^e$ (the case $I\subset V_s$ is treated similarly), then by Lemma~\ref{l:Markovian_image} we have $g\in G_v$. Let $J\in \cJ$ and $h\in \cG$ be such that $h(\widetilde J)=J$. By Remark \ref{l:images_gaps_gen} we also have $h\in G_v$, so that the composition $hg\in G_v\subset \cG$ takes the interval $\widetilde I$ inside the gap $J$. But this contradicts the fact that the interval $\widetilde{I}$ has trivial intersection with every $\cG$-image of gaps in $\cJ$.
\end{proof}

\subsection{Equivalence of ping-pong partitions}

We keep the notation introduced in the previous section.

\begin{dfn}[Equivalence of partitions]\label{d:equivalence_PPP}
	Let $(G,\alpha,T)$ be a marked virtually free group, with preferred generating set $\cG=\{G_v,\alpha_s(A_s)s\}_{v\in V,s\in S}$ and let $\rho,\rho':G\to\Homeo_+(\T)$ be two representations having interactive families $\Theta=\{U^e_v,V_s\}$, $\Theta'=\{{U^e_v}',V'_s\}$ respectively. Denote by $\cI,\cI'$ the corresponding sets of connected components. A map $\theta:\cI\to\cI'$ is a \emph{ping-pong equivalence} if the following conditions are satisfied:
	\begin{enumerate}[(PPE 1)]
		\item the map $\theta$ is a bijection which preserves the cyclic ordering of the intervals;\label{ppe1}
		\item the map $\theta$ preserves the inclusions relations of the two ping-pong partitions:
		\begin{itemize}
			\item if a generator $g\in \cG$ and intervals $I_1, I_2\in \cI$ are such that $\rho(g)(I_1)\subset I_2$, then $\rho'(g)(\theta(I_1))\subset \theta(I_2)$, and, in case of equality, equality is preserved,
			\item whereas if a generator $g\in \cG$ and an interval $I$ are such that the image $\rho(g)(I)$ is $\mathcal I$-Markovian, union of intervals $I_0,\ldots,I_m\in\cI$ and gaps $J_1,\ldots, J_m\in\cJ$, then the image $\rho'(g)(\theta(I))$ is $\cI'$-Markovian, union of the intervals $\theta(I_0),\ldots,\theta(I_m)\in\cI'$ and the gaps $\theta(J_1),\ldots, \theta(J_m)\in\cJ$.
		\end{itemize} \label{ppe3}
	\end{enumerate}
\end{dfn}

\begin{rem}\label{r:multivalued}
	A ping-pong equivalence $\theta:\cI\to \cI'$ induces a ``map'' from $\Delta$ to $\Delta'$ (the collections of endpoints of $\cI$ and $\cI$ respectively), which we still denote by $\theta$; however such a map $\theta:\Delta\to\Delta'$ may fail to be a bijection, and even worse, it can happen that a point $x\in \Delta$ has two different images in $\Delta'$. This is because we do not want to distinguish the cases where two intervals in $\cI$ are separated by a single point or a nontrivial gap (the semi-conjugacy class of the action is the same).
\end{rem}

\begin{rem}[Equivariance]\label{rem:equivariance}
	With abuse of notation, let $\theta:\Delta\to \Delta'$ denote the (possibly multivalued) map induced by the ping-pong equivalence $\theta:\cI\to \cI'$, as discussed in the previous Remark~\ref{r:multivalued}.
	As a consequence of the definition of ping-pong equivalence, if a generator $g\in\cG$ and points $x_1,x_2\in \Delta$ are such that $\rho(g)(x_1)=x_2$ then $\rho'(g)(\theta(x_1))=\theta(x_2)$.
\end{rem}

%\begin{rem}One also deduces from the definition of ping-pong equivalence that, if $I\in \cI$ is a connected component of some $U^e_v\in \Theta$ (resp.~$V_s\in\Theta$), then $\theta(I)$ is a connected component of ${U^e_v}'\in \Theta'$ (resp.~$V'_s\in\Theta'$); observe that this allows to define $\theta(U^e_v),\theta(V_s)$.\label{ppe2} \emr{after we verify that all the rest is correct, we can probably remove this remark}
%\end{rem}

\subsection{Proof of Theorem~\ref{t:conj}}\label{sc:proof_thmB}

\begin{lem}\label{lem:extension_refinement}
With notation as above, let $\Theta$ and $\Theta'$ be two ping-pong partitions with a ping-pong equivalence $\theta:\cI\to\cI'$. Then the map
	$\theta$ extends to  a ping-pong equivalence $\widetilde \theta$, where $\widetilde \cI$ and $\widetilde \cI'$ denote the collections of connected components of $\widetilde \Theta$ and $\widetilde \Theta'$, respectively.
\end{lem}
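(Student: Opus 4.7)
My strategy is to extend $\theta$ in two stages: first to a map $\widetilde{\theta}:\widetilde{\cJ}\to\widetilde{\cJ}'$ on new gaps, then to refined intervals $\widetilde{\cI}\to\widetilde{\cI}'$ by cyclic order. Recall from Lemma~\ref{l:images_gaps_gen} and equation~\eqref{eq:refinement_Delta} that every new gap has the form $\widetilde{J}=\rho(g)(J)$ for some $g\in\cG$ and $J\in\cJ$. This suggests the formula
\[
\widetilde{\theta}(\widetilde{J}):=\rho'(g)\bigl(\theta(J)\bigr),
\]
where $\theta$ is first extended canonically from $\cI$ to $\cJ$: the gap $\theta(J)\in\cJ'$ is the one whose left- and right-adjacent intervals are $\theta(I^-)$ and $\theta(I^+)$, with $I^\pm\in\cI$ the intervals adjacent to $J$ (this is well-defined because $\theta$ preserves cyclic order by (PPE~\ref{ppe1})).

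\textbf{Well-definedness.} The key step is to show that this formula is independent of the chosen decomposition. Suppose $\rho(g_1)(J_1)=\rho(g_2)(J_2)=\widetilde{J}$. By Remark~\ref{r:ppp5} and the connectedness of $\widetilde{J}$, there is a unique interval $I\in\cI$ containing $\widetilde{J}$. Lemma~\ref{l:images_gaps_gen} then forces $g_1,g_2$ into a common subset of $\cG$: into $G_v$ if $I\subset U_v^e$, or into the coset $\alpha_s(A_s)s$ if $I\subset V_s$. In the vertex case $g_2^{-1}g_1\in G_v\subset\cG$; in the edge case, writing $g_i=\alpha_s(a_i)s$ and using the relation $s^{-1}\alpha_s(a)s=\omega_s(a)$ from~\eqref{eq:pres_fundamental_group}, a direct computation yields $g_2^{-1}g_1=\omega_s(a_2^{-1}a_1)\in G_{t(s)}\subset\cG$. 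In both situations $g_2^{-1}g_1$ is itself a generator taking $J_1$ to $J_2$, so Remark~\ref{rem:equivariance} applied at the endpoints of $J_1$ gives $\rho'(g_2^{-1}g_1)\theta(J_1)=\theta(J_2)$, hence $\rho'(g_1)\theta(J_1)=\rho'(g_2)\theta(J_2)$, as required.

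\textbf{Extension to $\widetilde{\cI}$ and verification of (PPE~\ref{ppe1}--\ref{ppe3}).} For each $I\in\cI$, the new gaps inside $I$ form a finite set of pairwise disjoint closed subsets (by Lemma~\ref{l:images_gaps}) cutting $I$ into the refined sub-intervals of $\widetilde{\cI}$ it contains; I define $\widetilde{\theta}$ on these by matching them in cyclic order with the refined sub-intervals of $\theta(I)$. To justify this, I need the induced bijection on new gaps to preserve the order inside each $I$. The key observation is that whenever $I$ contains a new gap $\widetilde{J}=\rho(g)(J)$, the image $\rho(g^{-1})(I)$ must be Markovian: by (PPP~\ref{ppp3}) a non-Markovian image is contained in a single $\cI$-interval, which would contradict the fact that $\rho(g^{-1})(I)$ covers the old gap $J$. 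Then (PPE~\ref{ppe3}) for $\theta$ applied to $g^{-1}$ transfers the order of old gaps in $\rho(g^{-1})(I)$ verbatim to $\rho'(g^{-1})(\theta(I))$; applying $g$ back on both sides, the corresponding new gaps in $I$ and $\theta(I)$ appear in matching cyclic order. Patching different generators by transitivity of cyclic order gives (PPE~\ref{ppe1}). Condition (PPE~\ref{ppe3}) for $\widetilde{\theta}$ then follows from (PPE~\ref{ppe3}) for $\theta$, applied to the old interval containing a given refined interval, combined with Proposition~\ref{p:refinement_is_ping-pong} (so that the ping-pong dichotomy is available on both $\wTh$ and $\wTh'$).

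\textbf{Main obstacle.} I expect the well-definedness step to be the crux. Without the rigidity of Lemma~\ref{l:images_gaps_gen}, two generators witnessing the same new gap could differ by an arbitrary group element, over which the equivalence $\theta$ gives no direct control. The restriction to a vertex group or to a coset $\alpha_s(A_s)s$ is precisely what forces $g_2^{-1}g_1\in\cG$, reducing everything to the equivariance of $\theta$ already encoded in (PPE~\ref{ppe3}).
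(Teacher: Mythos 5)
Your proof is correct and follows essentially the same strategy as the paper's: both define the extension by the equivariance formula $\widetilde\theta(\rho(g)(\cdot))=\rho'(g)(\theta(\cdot))$ and reduce well-definedness to showing that two witnesses $g_1,g_2$ of the same new object satisfy $g_2^{-1}g_1\in\cG$, after which (PPE~\ref{ppe3}) for $\theta$ transfers everything. The only (cosmetic) differences are that the paper works with the new endpoints $\widetilde{\Delta}=\cG(\Delta)$ rather than with the new gaps, and derives $g_2^{-1}g_1\in\cG$ from Remark~\ref{r.gen} (both $\rho(g_i)^{-1}(I)$ being $\cI$-Markovian for the interval $I$ containing the new point) instead of from Lemma~\ref{l:images_gaps_gen} and the explicit coset computation.
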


\begin{proof}
	We will work with the (possibly multivalued) induced map $\theta:\Delta\to\Delta'$  (Remark \ref{r:multivalued}).
	Let $y\in\widetilde\Delta\setminus \Delta$ and consider a generator $g\in \cG$ and a point $x\in \Delta$ such that $\rho(g)(x)=y$. Equivariance (Remark \ref{rem:equivariance}) forces to define
	\[
	\widetilde\theta(y)=\widetilde\theta(\rho(g)(x)):=\rho'(g)(\theta(x)).
	\]
	We have to prove that the previous definition is coherent with the dynamics. In order to do this, consider  the interval $I\in\mathcal I$ of the partition $\Theta$ which contains the point $y$. Assume that we have two pairs $(g_1,x_1)$ and $(g_2,x_2)$ satisfying $g_i\in\cG$ and $x_i\in\Delta$ so that $\rho(g_1)(x_1)=\rho(g_2)(x_2)=y$. Then the images $\rho(g_1)^{-1}(I)$ and $\rho(g_2)^{-1}(I)$ must be $\cI$-Markovian. By Remark \ref{r.gen} we must have $g_2^{-1}g_1\in\cG$. 
	Applying (PPE \ref{ppe3}) to the intervals $I_1$ and $I_2$ having $x_1$ and $x_2$ respectively as an endpoint, we see that
	$\rho'(g_2^{-1}g_1)(\theta(x_1))=\theta(x_2)$,
	which proves that the image $\widetilde\theta(y)$ is well defined. Note that this argument shows directly that $\widetilde\theta$ satisfies the two conditions of (PPE \ref{ppe3}) (for this, note that if one has an inclusion $\rho(g)(I_1)\subset I_2$, then the image $\rho(g^{-1})(I_2)$ is $\cI$-Markovian, so the inclusions relations are prescribed by the images of the endpoints).
	
	Let us prove (PPE \ref{ppe1}) for $\widetilde\theta$. Since $\widetilde\theta=\theta$ in restriction to the set of endpoints $\Delta$, we see that $\widetilde\theta$ preserves the cyclic ordering of $\Delta$. Hence it is enough to check that for every interval $I\in\mathcal I$ the restriction $\widetilde\theta\restriction_{I}$ preserves the cyclic order. We assume that $I\cap\widetilde\Delta\neq\emptyset$. Note that by construction of $\widetilde\theta$ we have the equivariance relation which holds for every generator $g\in\cG$ expanding $I$:
	\[\widetilde\theta(I\cap\widetilde\Delta)=\rho'(g)^{-1}\left (\theta\left(\rho(g) (I)\cap\Delta\right)\right ).\]
	Since $\rho(g)$ and $\rho'(g)$ preserve the orientation and since (PPE \ref{ppe1}) holds for $\theta$, we see that $\widetilde\theta$ preserves the cyclic ordering of $I\cap\widetilde\Delta$. This achieves the proof of (PPE \ref{ppe1}) for $\widetilde\theta$.
\end{proof}

\begin{proof}[Proof of Theorem~\ref{t:conj}]
	Let $\rho,\rho':(G,\alpha,T)\to \Homeo_+(\T)$ be two representations of a marked virtually free group, with equivalent proper ping-pong partitions $\Theta$ and $\Theta'$ respectively.
	Write $\Delta_0=\Delta$ and recursively $\Delta_{k+1}=\widetilde \Delta_k=\rho(\cG)(\Delta_{k})$. Similar sets are defined for the representation $\rho'$.
	Then by Proposition \ref{p:refinement_is_ping-pong} and Lemma \ref{lem:extension_refinement}, one gets ping-pong equivalences $\theta_k:\Delta_k\to\Delta_k'$.
	Observe that the sets $\Delta_k$ define an increasing sequence of finite sets, with the union $\Delta_{\infty}=\bigcup_{k\in\N}\Delta_k$ accumulating on the minimal invariant set $\Lambda$ (because $\Delta_{\infty}=\bigcup_{k\in\N}\Delta_k$ is the union of the orbits of points in $\Delta_0$). Observe that as the ping-pong partitions are proper, by Proposition~\ref{p:MinimalSet} we have that the minimal invariant set  for $\rho(G)$ is unique, and either the circle or a Cantor set. Similar considerations work for $\Delta'_\infty$.
	
	By a limit process the maps $\theta_k:\Delta_k\to\Delta_k'$ define a circular order bijection $\theta_\infty:\Delta_\infty\to\Delta'_\infty$ which extends uniquely to a continuous equivariant bijection $\theta_\infty:\Lambda\to\Lambda'$ which preserves the circular order.
	By the definition of ping-pong equivalence we get that $\rho'(s)(\theta_\infty(x))=\theta_\infty(\rho(x))$ for all points $x\in\Lambda$. The lift of $\theta_\infty$ to $\Lambda+\Z\subset \R$ extends to a monotone non-decreasing map defined on $\R$, which  commutes with integer translations, and this gives the semi-conjugacy.
\end{proof}

\section{DKN partitions for virtually free groups}\label{s:topskel_gen}

In this section we use the arboreal partitions described in \S\S\ref{sc:arboreal_part}--\ref{sc:finite_index} to obtain a ping-pong partition of the circle for a locally discrete, virtually free group~$G\subset \Diff_+^\omega(\T)$ as done by Deroin--Kleptsyn--Navas in \cite{DKN2014} for free groups.

\subsection{From the arboreal partition to the DKN partition}

Suppose that $G\subset \Diff^\omega_+(\T)$ is a  locally discrete, virtually free group (we are supposing that $G$ is not virtually cyclic).
Fix a word norm $\|\cdot\|$ on the group $G$ given by a finite generating system. 
Let $\alpha:G\to\Isom_+(X)$ be a proper cocompact action on a locally finite tree, with fundamental domain $T$.
Then, as we described in \S \ref{ssc.dyn_arb}, we have an arboreal partition of $G$, defined by the subsets $W_c$.
Inspired by \cite{DKN2014}, we ``push'' the arboreal partition to the circle, defining the following subsets:
\[
U_{E}:=\left\{x\in\T\,\middle\vert\,\exists\text{ neighborhood }I_x\ni x\text{ s.t.~}\lim_{n\to\infty}\sup_{g\notin E,\|g\|\ge n}|g(I_x)|=0\right\}
\]
for any subset  $E$ of $G$.
We shall use frequently the shorthand notation $U_c$ instead of $U_{W_c}$. The sets $U_c$ determine all the sets of the form $U_{W_ch}$:
\begin{lem}\label{l:top-alg}
Given an element $h\in G$ and a subset $E\subset G$ in the group, the subsets $U_{Eh}$ and $h^{-1}(U_{E})$ of the circle coincide.
\end{lem}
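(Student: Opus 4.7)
The plan is to prove both inclusions by the change of variable $g' = gh^{-1}$, which realizes the bijection between the complements $G \setminus Eh$ and $G \setminus E$. The two facts to combine are: (i) $h$ is a homeomorphism, so that $h(I_x)$ is a neighbourhood of $h(x)$ whenever $I_x$ is a neighbourhood of $x$, together with the identity $g(I_x) = gh^{-1}(h(I_x)) = g'(h(I_x))$; and (ii) the word norms are comparable up to an additive constant, namely $|\|g\| - \|g'\|| \le \|h\|$.

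First I would show $h(U_{Eh}) \subset U_E$. Take $x \in U_{Eh}$ with witnessing neighbourhood $I_x$, so that $\sup\{|g(I_x)| : g \notin Eh,\, \|g\| \ge n\} \to 0$. Setting $y = h(x)$ and $I_y = h(I_x)$, the bijection $g \mapsto g' = gh^{-1}$ sends $G \setminus Eh$ onto $G \setminus E$, and under it $|g'(I_y)| = |g(I_x)|$. Since $\|g\| \ge \|g'\| - \|h\|$, the condition $\|g'\| \ge n$ implies $\|g\| \ge n - \|h\|$, so
\[
\sup_{\substack{g' \notin E \\ \|g'\| \ge n}} |g'(I_y)| \;\le\; \sup_{\substack{g \notin Eh \\ \|g\| \ge n - \|h\|}} |g(I_x)| \;\xrightarrow[n\to\infty]{}\; 0,
\]
which shows $y \in U_E$, i.e.\ $x \in h^{-1}(U_E)$.

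The reverse inclusion $h^{-1}(U_E) \subset U_{Eh}$ is symmetric: starting from $y \in U_E$ with neighbourhood $I_y$, one takes $x = h^{-1}(y)$, $I_x = h^{-1}(I_y)$, and applies the inverse change of variable $g = g'h$, using the other direction of the triangle inequality $\|g'\| \ge \|g\| - \|h\|$. I do not expect any real obstacle; the only thing to be careful about is that the constant shift $\|h\|$ in the word norm is harmless because we are taking a limit as $n \to \infty$, and that the bijection $g \leftrightarrow g'$ genuinely identifies the two excluded sets, which is the defining property of right cosets.
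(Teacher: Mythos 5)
Your proof is correct and follows essentially the same route as the paper: the change of variable $g\mapsto gh^{-1}$ identifying the complements of $Eh$ and $E$, the identity $g(I_x)=g'(h(I_x))$, and the additive comparison of word norms $\bigl|\|g\|-\|g'\|\bigr|\le\|h\|$, which is harmless in the limit. The paper writes the two bounds as a single sandwich inequality and states only one inclusion explicitly, but the content is identical to your two-sided argument.
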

\begin{proof}
Let us first make a change of variable
\begin{equation}
\label{eq:limsup}
\sup_{g\notin Eh,\|g\|\ge n}|g(I_x)|=\sup_{gh^{-1}\notin E,\|g\|\ge n}|g(I_x)|=\sup_{g\notin E,\|{gh}\|\ge n}|gh(I_x)|.
\end{equation}
Then by the triangle inequality, the quantity in \eqref{eq:limsup} can be bounded as follows: 
\[\sup_{g\notin E,\|{g}\|\ge n+\|h\|}|gh(I_x)|\le  \sup_{g\notin E,\|gh\|\ge n}|gh(I_x)|\le \sup_{g\notin E,\|g\|\ge n-\|h\|}|gh(I_x)|.\]

As the subset $h(I_x)$ is a neighborhood of the point $h(x)$, taking the limit on each term as $n\to \infty$, gives that if $x\in U_{Eh}$ then $h(x)\in U_{E}$, as wanted.
\end{proof}
Furthermore, the inclusion relations for the sets $W_cg$ can be naturally transferred to the sets $U_{W_cg}$ (this should  be compared with \cite[\S5.1]{BP}).
\begin{lem}\label{l:inclusionU}
For any pair of subsets $E\subset F$ of the group $G$, one has
 $U_{E}\subset U_{F}$.
\end{lem}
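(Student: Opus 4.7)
The proof is essentially immediate from the definition, since enlarging the excluded set $E$ shrinks the family of group elements over which the supremum is taken, and the supremum is monotone with respect to the set it is taken over.

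More precisely, the plan is to fix $x \in U_E$, take the neighbourhood $I_x \ni x$ furnished by the definition, and show that the same $I_x$ witnesses $x \in U_F$. The key observation is that $E \subset F$ implies the reverse inclusion of complements in $G$: if $g \notin F$ then $g \notin E$. Hence for every $n$,
\[
\sup_{g \notin F,\, \|g\| \ge n} |g(I_x)| \;\le\; \sup_{g \notin E,\, \|g\| \ge n} |g(I_x)|,
\]
since the left-hand side is a supremum over a subset of the index set on the right. Taking $n \to \infty$, the right-hand side tends to $0$ by the assumption $x \in U_E$, so the left-hand side does too. This shows $x \in U_F$, hence $U_E \subset U_F$.

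I do not expect any serious obstacle: the argument is a purely set-theoretic monotonicity statement about suprema, and no dynamical or arboreal content is used. The only thing to be a bit careful about is the convention that the sup over an empty index set is understood as $0$ (this matters if, for some $n$, every $g$ with $\|g\| \ge n$ happens to lie in $F$), but this is harmless and actually strengthens the inequality.
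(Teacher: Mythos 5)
Your argument is correct and is exactly the paper's own proof: the reverse inclusion of complements gives the termwise inequality of suprema, and the same neighbourhood $I_x$ witnesses membership in $U_F$. Nothing further is needed.
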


\begin{proof}
As we have $E\subset F$, we have the reversed inclusion of the sets
$
\{g\notin E\}\supset \{g\notin F\}.
$
This implies that for any subset $I\subset \T$, for any integer $n\in\N$ one has the inequality
\begin{equation}\label{eq:comp_sup}
\sup_{g\notin E,\|g\|\ge n}|g(I)|\ge \sup_{g\notin F,\|g\|\ge n}|g(I)|.
\end{equation}
Take any point $x\in U_{E}$. By definition, there exists an open neighborhood $I_x\ni x$ such that
\[\lim_{n\to\infty}\sup_{g\notin E,\|g\|\ge n}|g(I_x)|=0.\]
As \eqref{eq:comp_sup} holds for every integer $n\in\N$, we must have inequality also when taking the limit as $n\to \infty$, thus
\[\lim_{n\to\infty}\sup_{g\notin F,\|g\|\ge n}|g(I_x)|=0.\]
Hence the inclusion $U_{E}\subset U_{F}$.
\end{proof}

Our aim is to prove that the collection of subsets $\{U_c^G\}_{c\in \pi_0(X\setminus T^G)}$ defines a DKN partition of the circle, in the sense that it possesses properties which are analogue to those of the partition from Theorem~\ref{t:DKNfree} for free groups.

\begin{thm}\label{t:DKNmarkov3}
Let $G\subset\Diff_+^\omega(\T)$ be a locally discrete, virtually free group, with minimal invariant set $\Lambda$.
Let $\alpha:G\to \Isom_+(X)$ be a proper action on a locally finite tree and let $T$ be a connected fundamental domain.
The collection of subsets $\{U_c\}_{c\in \pi_0(X\setminus T)}$ satisfies the following properties:
\begin{enumerate}[1.]
\item every subset $U_c$  is open;

\item every subset $U_c$ has finitely many connected components;\label{i:finite_union_DKN}

\item any two different subsets $U_c$ have empty intersection inside $\Lambda$;

\item the union of the subsets $U_c$ covers all but finitely many points of $\Lambda$;

\item for any element $g\in W_{c\rightarrow d}$, the inclusion  
 $g(U_c)\subset U_d$ holds.\label{i:inclusion_relation}
\end{enumerate}
\end{thm}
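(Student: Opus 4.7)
The plan is to combine the arboreal machinery of §\ref{sc:arboreal_part}--\ref{sc:finite_index} with the already established free-group case (Theorem~\ref{t:DKNfree}), so that properties~1 and~5 are immediate while properties~2--4 are deduced by passing to a finite-index free subgroup. Property~1 follows at once from the definition of $U_c$, which requires only the existence of a neighbourhood with a certain contraction behaviour. For property~5, given $g\in W_{c\to d}$, the very definition gives $W_c g^{-1}\subset W_d$; the monotonicity Lemma~\ref{l:inclusionU} yields $U_{W_c g^{-1}}\subset U_{W_d}=U_d$, and Lemma~\ref{l:top-alg} identifies the left-hand side with $g(U_c)$.

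For properties~2--4, I would fix, via Theorem~\ref{t:StructureVirtFree}, a normal free subgroup $H\subset G$ of finite index; it is automatically locally discrete in $\Diff_+^\omega(\T)$. The restriction of the $G$-action on $X$ to $H$ is still a proper cocompact action, and one may choose a connected fundamental domain $T^H\supset T^G$ so that the components of $X\setminus T^H$ are in bijection with a symmetric free generating set $S_H$ of $H$; the corresponding arboreal partition for $H$ is the Cayley-graph partition of §\ref{s:DKN}, and Theorem~\ref{t:DKNfree} supplies sets $\{U_d^H\}$ with the DKN properties. The combinatorial bridge is Lemma~\ref{l:finite_index}, $W_c^G\cap H=\bigsqcup_{\iota(d)=c}W_d^H$, which I would promote to the equality
\[
U_c^G\cap\Lambda \;=\; \bigsqcup_{\iota(d)=c} U_d^H\cap\Lambda
\]
up to finitely many exceptional points. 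Since the preimage of each $c$ under $\iota$ is finite, this equality transports the DKN properties from $H$ to $G$: finiteness of the number of intervals (property~2), pairwise disjointness in $\Lambda$ (property~3), and coverage of $\Lambda$ up to finitely many points (property~4).

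The main obstacle is the non-trivial inclusion ``$\supset$'' in the displayed equation. Given $x\in U_d^H$ with $\iota(d)=c$ and a neighbourhood $I_x$ that shrinks under every long element of $H\setminus W_d^H$, I have to show that $I_x$ also shrinks under every long $g\in G\setminus W_c^G$. Decomposing $g=g_i h'$ with $g_i$ in a finite system of coset representatives and $h'\in H$, two issues arise. The first is combinatorial: the condition $g\notin W_c^G$ must be converted into a condition of the form $h'\notin W_{d'}^H$ for some $d'$ with $\iota(d')=c$; I would read this off from Proposition~\ref{p:inclusionW} applied to the action of $H$ on $X$, using that $T^H$ is a union of finitely many $H$-translates of $T^G$ and that different cosets of $H$ move $T^G$ to disjoint regions of $X$. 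The second is analytic: one must control $|g_i h'(I_x)|$ in terms of $|h'(I_x)|$, which is a routine $C^1$-distortion estimate since only finitely many fixed diffeomorphisms $g_i$ are involved. The reverse inclusion together with the identification of the finite exceptional set is easier and follows from the disjointness of the $U_d^H$ in $\Lambda$ and from the DKN coverage property for $H$.
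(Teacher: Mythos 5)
Your proposal follows essentially the same route as the paper: properties 1 and 5 are handled exactly as in the text (via Lemmas~\ref{l:top-alg} and \ref{l:inclusionU} and the definition of $W_{c\to d}$), and properties 2 and 4 are obtained by the same finite-index reduction, your ``main obstacle'' being precisely the paper's Lemma~\ref{l:quasi-isomU} (coset decomposition $g=th$, bounded displacement of $T.\Sigma$ in the tree so that for $\|h\|$ large the component of $T.g$ is the one of $T.h$, and uniform continuity of the finitely many representatives). The only organizational difference concerns property 3, which the paper proves directly from Sacksteder's theorem (Lemma~\ref{l:empty_int}) rather than extracting it from the comparison with $H$; note that this Sacksteder input is still needed implicitly in your argument for the inclusion $U_c^G\cap\Lambda\subset\bigsqcup_{\iota(d)=c}U_d^H$ up to finitely many points, since pairwise disjointness of the $U_d^H$ alone does not rule out $U_c^G$ meeting some $U_{d'}^H$ with $\iota(d')\neq c$ (and, relatedly, the combinatorial conversion should yield $h'\notin W_{d'}^H$ for \emph{every} $d'$ with $\iota(d')=c$, i.e.\ $h'\notin W_c^G\cap H$, not merely for some $d'$).
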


\begin{rem}\label{r.groupe_darete}
The condition $g\in W_{c\to d}$ for Item \ref{i:inclusion_relation}.~is detailed in \S \ref{ssc.dyn_arb}, and in particular it is characterized by Proposition \ref{p:inclusionW}.	Here we simply remark the following.	
The marking $(\alpha,T)$ of the group $G$ gives an isomorphism $G\cong \pi_1(\overline X;G_v,A_e)$.
Let the element $g\in G_v$ and  $c,d\in\pi_0(X\setminus T)$ be such that the connected components $X_c$ and $X_d$ are adjacent to the vertex $v$ (when lifting $v\in V$ to a vertex of $T\subset X$). In this case, the condition $g\in W_{c\to d}$ implies $g^{-1}\in W_{d\to c}$ (Lemma \ref{l:arboreal_vertex}). As a consequence, Item \ref{i:inclusion_relation}.~of Theorem \ref{t:DKNmarkov3} above gives the equality $g (U_c)=U_d$.
(Observe that we can possibly have $c=d$, and in such case $g\in\alpha_e(A_e)$ for some edge $e\in E$ satisfying $o(e)=v$.)
\end{rem}

In what remains, we proceed step by step, as in \cite{DKN2014}, separating the proofs of the different properties of the sets $U_c$. We start in \S\ref{ssc:basic} with the most elementary proofs. In Theorem~\ref{t:DKNfree} the second and fourth properties were the most delicate part to verify, requiring sophisticated tools of one-dimensional dynamics. Here we can use the good behaviour of the partition when passing to a finite-index free subgroup to reduce to the already proved Theorem~\ref{t:DKNfree}. This is done in \S\ref{ssc:finite}.

\subsection{Elementary properties}\label{ssc:basic}

\begin{lem}
The sets $U_c$ are open.
\end{lem}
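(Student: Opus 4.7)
The plan is to observe that openness follows almost tautologically from the form of the definition: the condition ``$x\in U_c$'' is witnessed by the existence of a neighbourhood $I_x\ni x$ on which a certain contraction property holds, and the contraction property is a property of the neighbourhood alone, not of the specific point $x$ inside it.

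First, fix $x\in U_c$ and pick an open neighbourhood $I_x\ni x$ such that
\[
\lim_{n\to\infty}\sup_{g\notin W_c,\|g\|\ge n}|g(I_x)|=0.
\]
I then claim that $I_x\subset U_c$. Indeed, for any $y\in I_x$, the set $I_x$ itself is an open neighbourhood of $y$, and the defining limit for membership in $U_c$ is satisfied by $y$ using exactly the same neighbourhood $I_y:=I_x$. Hence $y\in U_c$. This shows that every point of $U_c$ has an open neighbourhood contained in $U_c$, so $U_c$ is open.

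There is no real obstacle here: the only point worth flagging is the (implicit) convention that the ``neighbourhood'' $I_x$ in the definition of $U_E$ may be taken to be open, which is standard and harmless (if one only assumes $I_x$ is a neighbourhood in the general sense, one can replace it by its interior without affecting the contraction condition, since the supremum of $|g(\cdot)|$ over a subset is bounded by that over a superset). Once this is noted, the same argument applies verbatim to every set of the form $U_E$, not just $U_c=U_{W_c}$.
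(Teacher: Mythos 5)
Your proof is correct and is exactly the argument the paper has in mind: since membership in $U_c$ is witnessed by a neighbourhood whose contraction property is independent of the chosen point inside it, openness is immediate from the definition (the paper simply states ``this follows from the definition''). Your remark about replacing a general neighbourhood by its interior, using monotonicity of the supremum under inclusion of intervals, is a harmless and correctly handled technicality.
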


\begin{proof}
This follows from the definition (in \cite[Proposition 2.4]{DKN2014}, where the definition of the sets $U_c$ is different, this property follows from a classical argument of control of affine distortion).
\end{proof}

\begin{lem}\label{l:empty_int}
For two different connected components $c,d\in\pi_0(X\setminus T)$, the intersection $U_c\cap U_d$ is contained in the complement of the minimal set $\Lambda$ of $G$.
\end{lem}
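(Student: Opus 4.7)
The plan is to suppose for a contradiction that some $x\in U_c\cap U_d$ lies in $\Lambda$, and to exploit the fact that $W_c\cap W_d=\emptyset$ (which holds because $c\neq d$: the components $X_c,X_d$ of $X\setminus T$ are disjoint, so the sets $W_c=\{g\in G:T.g\subset X_c\}$ and $W_d$ are disjoint too) together with the minimality of $\Lambda$ to force translates of a small neighborhood of $x$ to be both arbitrarily short at infinity, and yet to cover a fixed number of well-separated points.

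First I would combine the two defining conditions. From $x\in U_c\cap U_d$, by intersecting the two neighborhoods provided by the definitions of $U_c$ and $U_d$, one obtains a common open neighborhood $I_x\ni x$ such that both $\sup_{g\notin W_c,\,\|g\|\ge n}|g(I_x)|$ and $\sup_{g\notin W_d,\,\|g\|\ge n}|g(I_x)|$ tend to $0$ as $n\to\infty$. Since $W_c\cap W_d=\emptyset$, every $g\in G$ belongs to $G\setminus W_c$ or to $G\setminus W_d$, whence
\[
\sup_{\|g\|\ge n}|g(I_x)|\;\longrightarrow\; 0 \quad\text{as}\quad n\to\infty.
\]
That is, every long-word element of $G$ contracts $I_x$ to arbitrarily small length.

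Next I would use minimality to produce a \emph{finite} cover of $\Lambda$. As $x\in\Lambda$ and $I_x$ is an open neighborhood of $x$, by minimality of $\Lambda$ and compactness there exist $g_1,\ldots,g_k\in G$ such that $\Lambda\subset\bigcup_{i=1}^k g_i(I_x)$. Applying $G$-invariance, for any $h\in G$ one has
\[
\Lambda \;=\; h^{-1}(\Lambda) \;\subset\; \bigcup_{i=1}^k h^{-1}g_i(I_x).
\]

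Finally, I would obtain the contradiction by a pigeonhole argument. Because $G$ is not virtually cyclic, Hector's lemma (recalled in the introduction) rules out finite orbits, so $\Lambda$ is infinite. Choose $k+1$ distinct points $y_1,\ldots,y_{k+1}\in\Lambda$ and set $\delta:=\min_{i\neq j}d(y_i,y_j)>0$. For every $h\in G$, since only $k$ intervals $h^{-1}g_i(I_x)$ are available to cover $\{y_1,\ldots,y_{k+1}\}\subset\Lambda$, at least one of them contains two of the $y_j$, and therefore has length $\ge \delta$. But $\|h^{-1}g_i\|\ge \|h\|-\|g_i\|\to\infty$ as $\|h\|\to\infty$, so by the first paragraph each $|h^{-1}g_i(I_x)|$ tends to $0$, contradicting the uniform lower bound $\delta$. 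The main subtlety is making the final quantification: one needs $\Lambda$ to be infinite in order to exhibit $k+1$ points at positive pairwise distance, which is exactly where the non-virtually-cyclic hypothesis on $G$ (via Hector's lemma) enters.
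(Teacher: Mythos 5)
Your proof is correct, but it reaches the contradiction by a different route than the paper. Both arguments begin identically: since $W_c\cap W_d=\emptyset$, a point $x\in U_c\cap U_d$ has a neighbourhood $I_x$ with $\sup_{\|g\|\ge n}|g(I_x)|\to 0$, i.e.\ $I_x$ is contracted by \emph{all} sufficiently long elements of $G$. The paper then concludes in one line by invoking Sacksteder's theorem: $\Lambda$ contains a hyperbolic fixed point of some element, and (after conjugating it into $I_x$ by minimality) the corresponding long elements expand a definite subinterval of $I_x$, contradicting uniform contraction. You instead avoid Sacksteder entirely: you extract a finite subcover $\Lambda\subset\bigcup_{i=1}^k g_i(I_x)$ by minimality and compactness, translate it by $h^{-1}$ using invariance of $\Lambda$, and pigeonhole $k+1$ points of $\Lambda$ at pairwise distance $\ge\delta$ into $k$ intervals whose lengths all tend to $0$ as $\|h\|\to\infty$. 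This is a genuine trade-off: your argument is purely topological and would work verbatim for groups of homeomorphisms (it uses only minimality, compactness, invariance, and the fact that $\Lambda$ is infinite, which you correctly source from Hector's lemma and the standing non-virtually-cyclic assumption), whereas the paper's argument is shorter but leans on a differentiable-dynamics input that is in any case available (and used elsewhere) in the real-analytic, locally discrete setting. The only cosmetic point is that you should fix $I_x$ to be an open \emph{interval} (always possible by shrinking), so that ``contains two points at distance $\ge\delta$'' really implies ``has length $\ge\delta$''.
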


\begin{proof}
Indeed, for a point $x$ that lies in the intersection $U_c\cap U_d$, we have that there exists a neighborhood $I_x\ni x$ such that 
\[\lim_{n\to\infty}\sup_{g\notin W_{c}\cap W_d,\|g\|\ge n}|g(I_x)|=0.\]
However, the intersection $W_c\cap W_d$ is empty, so the neighborhood $I_x$ is contracted by every sufficiently long iteration of elements in $G$.
This is in contradiction with the existence of elements with hyperbolic fixed points in the minimal set (Sacksteder's theorem \cite[\S3.2]{Navas2011}). See \cite[Proposition 2.5]{DKN2014}.
\end{proof}

\begin{lem}\label{l:ping-pong}
Given an element $g\in W_{c\rightarrow d}$, then the inclusion $g(U_c)\subset U_{d}$ holds.
\end{lem}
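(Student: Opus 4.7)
The plan is to chain together the two purely formal lemmas that have just been proved: Lemma~\ref{l:top-alg} (the translation identity $U_{Eh}=h^{-1}(U_E)$) and Lemma~\ref{l:inclusionU} (monotonicity $E\subset F\Rightarrow U_E\subset U_F$). The hypothesis $g\in W_{c\to d}$ is, by definition, the \emph{algebraic} inclusion $W_c g^{-1}\subset W_d$; the goal is the \emph{dynamical} inclusion $g(U_c)\subset U_d$. The two lemmas are precisely the bridge between these two levels.

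Concretely, I would start by rewriting the hypothesis in the convenient form
\[
W_c \;\subset\; W_d\, g,
\]
which is immediate from $W_c g^{-1}\subset W_d$ by right-multiplying by $g$. Applying Lemma~\ref{l:inclusionU} with $E=W_c$ and $F=W_d g$ then yields
\[
U_c \;=\; U_{W_c} \;\subset\; U_{W_d g}.
\]
Next, Lemma~\ref{l:top-alg} applied to $E=W_d$ and $h=g$ gives the identification
\[
U_{W_d g} \;=\; g^{-1}\bigl(U_{W_d}\bigr) \;=\; g^{-1}(U_d).
\]
Combining these two facts, $U_c\subset g^{-1}(U_d)$, and applying $g$ to both sides produces the desired inclusion $g(U_c)\subset U_d$.

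There is essentially no obstacle: the whole content of the lemma is already absorbed into Lemmas~\ref{l:top-alg} and~\ref{l:inclusionU}, and the only thing to check is that the definition of $W_{c\to d}$ (with the inverse on $g$) is the exact formulation that makes this two-line argument go through. This is, in fact, the motivation for the convention with inverses noted in the remark following the definition of $W_{c\to d}$: it is set up precisely so that the algebraic inclusion on arboreal sets transfers to a \emph{left} action inclusion on the topological sets $U_c$, matching the convention for actions on the circle.
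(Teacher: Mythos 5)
Your proof is correct and follows essentially the same route as the paper: both arguments consist of chaining Lemma~\ref{l:top-alg} with Lemma~\ref{l:inclusionU}, the only (immaterial) difference being that the paper applies the translation identity directly to $W_cg^{-1}$ to get $g(U_c)=U_{W_cg^{-1}}\subset U_{W_d}$, whereas you first rewrite the hypothesis as $W_c\subset W_dg$ and apply the two lemmas in the opposite order. The observation about the inverse in the definition of $W_{c\to d}$ being tailored to make this transfer work is exactly the point of the remark in the paper.
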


\begin{proof}
Indeed, by definition of $W_{c\to d}$, we have $W_cg^{-1}\subset W_d$, and using Lemmas~\ref{l:top-alg} and \ref{l:inclusionU}, we get $g(U_c)=U_{W_cg^{-1}}\subset U_d$.
\end{proof}

\subsection{A finite number of connected components}\label{ssc:finite}

In this subsection, we will make use of the notation introduced in \S\ref{sc:finite_index}.

\begin{prop}\label{p:finite}
Each set $U_c$ has finitely many connected components.
\end{prop}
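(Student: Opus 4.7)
My plan is to reduce the statement to the already-established free-group case (Theorem~\ref{t:DKNfree}) by passing to a free normal subgroup of finite index. Using Theorem~\ref{t:StructureVirtFree}, I will choose $H\trianglelefteq G$ a normal free subgroup of finite index $m$, together with coset representatives $h_1=id,h_2,\ldots,h_m$ so that $G=\bigsqcup_i Hh_i$. The restriction of $\alpha$ to $H$ is a free action on $X$; I pick a connected fundamental domain $T^H\supset T^G$ with each $T^G h_i^{-1}$ lying inside $T^H$. By local finiteness of $X$ and cocompactness, $\pi_0(X\setminus T^H)$ is finite, and the map $\iota:\pi_0(X\setminus T^H)\to\pi_0(X\setminus T^G)$ of \S~\ref{sc:finite_index} is finite-to-one.

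The algebraic heart of the argument will be the identity
\[
U_c\;=\;\bigcap_{i=1}^{m}h_i^{-1}\!\bigl(U^{[H]}_{E_i}\bigr),\qquad E_i:=W_c\,h_i^{-1}\cap H,
\]
where $U^{[H]}_E$ denotes the analogue of $U_E$ built with the sup taken over $h\in H\setminus E$ and the $H$-word norm $\|\cdot\|_H$. The identity follows from writing each $g\in G$ uniquely as $g=hh_i$, applying the change of variables in Lemma~\ref{l:top-alg}, and using Lipschitz-equivalence of $\|\cdot\|_G$ and $\|\cdot\|_H$ on $H$. Generalizing Lemma~\ref{l:finite_index} via the geometric observation that $X_c h_i^{-1}$ is a union of finitely many $X_d^H$'s plus a subtree of $T^H$, one shows that $E_i$ and $\bigsqcup_{d\in D_i}W_d^H$ differ by only finitely many elements, for a finite subset $D_i\subset\pi_0(X\setminus T^H)$; hence $U^{[H]}_{E_i}=U^{[H]}_{\bigsqcup_{d\in D_i}W_d^H}$.

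Next, I will apply Theorem~\ref{t:DKNfree} to the free locally discrete group $H$ (with a symmetric free generating set compatible with $T^H$ through Bass-Serre theory), obtaining that every $U_d^{H,[H]}$ is a finite union of intervals. Lemma~\ref{l:inclusionU} immediately yields $\bigcup_{d\in D_i}U_d^{H,[H]}\subset U^{[H]}_{E_i}$, while the reverse containment holds modulo finitely many points: on $\Lambda$, a point of $U^{[H]}_{E_i}\cap U_{d'}^{H,[H]}$ with $d'\notin D_i$ would admit a neighborhood shrinking along every element of $H\setminus\{id\}$ of large $H$-norm, which is impossible because this shrinking property is open and must therefore miss the dense set of hyperbolic repelling fixed points furnished by Sacksteder's theorem; the remaining exceptional points come from item~\ref{i:cover} of Theorem~\ref{t:DKNfree}. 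A short separate argument using that the $G$-orbits of gaps of $\Lambda$ split into finitely many classes handles the possible discrepancy off $\Lambda$. It then follows that each $U^{[H]}_{E_i}$ is a finite union of intervals, and hence so is $U_c$ as a finite intersection of homeomorphic images of such sets.

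The main obstacle will be precisely this last comparison: the definitions of $U^{[H]}_{E_i}$ and $\bigcup_{d\in D_i}U_d^{H,[H]}$ exclude different subsets of $H$ from the shrinking test, so they do not coincide a priori; reconciling them needs both the covering property~\ref{i:cover} of Theorem~\ref{t:DKNfree} and a Sacksteder-type input, which together force the discrepancy to be finite. Once this is in place, the finite-components property propagates cleanly from $H$ to $G$ through the finite formula above.
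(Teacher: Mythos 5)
Your proposal is correct and follows essentially the same route as the paper: pass to a free subgroup $H$ of finite index, relate $W_c^G\cap H$ to the sets $W_d^H$ (Lemma~\ref{l:finite_index}), and conclude via items \ref{i:finite_free}.~and \ref{i:cover}.~of Theorem~\ref{t:DKNfree} applied to $H$, together with a Sacksteder-type disjointness argument on $\Lambda$. The only real difference is bookkeeping: you write $U_c=\bigcap_i h_i^{-1}\bigl(U^{[H]}_{E_i}\bigr)$ over right-coset representatives, whereas the paper's Lemma~\ref{l:quasi-isomU} identifies $U_c^G$ directly with the single set defined by testing contraction only along $H\setminus(W_c^G\cap H)$, by writing $g=th$ with $t$ ranging over a finite set of left-coset representatives and using uniform continuity of those finitely many diffeomorphisms.
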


This follows from the proposition below together with property \ref{i:finite_free}.~of Theorem~\ref{t:DKNfree}.

\begin{prop}\label{p:freesubgroup}
With notation as above, let $H\subset G$ be a free subgroup of finite index. For any connected component $c\in \pi_0(X\setminus T^G)$ we have the equality
\[
U_c^G\cap \Lambda=\left\{(\text{finitely many points})\sqcup\bigsqcup_{\iota(d)=c}U_d^H\right\}\cap \Lambda.
\]
In particular, the union of the subsets $U_c^G$ covers all but finitely many points of $\Lambda$.
\end{prop}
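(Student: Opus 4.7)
The plan is to establish the claimed equality by proving two inclusions, with Theorem~\ref{t:DKNfree} applied to the finite-index free subgroup $H$ supplying the cover and disjointness properties on $\Lambda$. We split the argument into (a) the direct inclusion $U_d^H\subseteq U_c^G$ for every $d$ with $\iota(d)=c$, and (b) the reverse inclusion modulo a finite set.

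For (a), fix $x\in U_d^H$ with defining neighbourhood $I_x$. The crucial first step is to choose a system of left coset representatives $\{g_i\}_{i=1}^N$ for $G=\bigsqcup_i g_i H$ satisfying $T^G\cdot g_i\subseteq T^H$ for every $i$; this is always possible because, starting from any representative $g$ of a given coset, the fundamental-domain property of $T^H$ for the $H$-action produces a unique $h\in H$ with $T^G\cdot(gh)\subseteq T^H$, and we set $g_i:=gh$. With this choice, any $g\in G$ writes uniquely as $g=g_i h'$ with $h'\in H$, and for $h'\in W_d^H$ we get
\[
T^G\cdot g \;=\; (T^G\cdot g_i)\cdot h' \;\subseteq\; T^H\cdot h' \;\subseteq\; X_d^H \;\subseteq\; X_c^G,
\]
so $g\in W_c^G$. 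Contrapositively, $g\notin W_c^G$ forces $h'\notin W_d^H$. Since the finitely many $g_i$ are real-analytic diffeomorphisms with uniformly bounded derivative (say by $L$), one has $|g(I_x)|=|g_i(h'(I_x))|\le L\,|h'(I_x)|$; and because $[G:H]$ is finite, the word norms $\|\cdot\|_G$ and $\|\cdot\|_H$ are equivalent on $H$, so $\|g\|_G\to\infty$ forces $\|h'\|_H\to\infty$. The defining property of $U_d^H$ then gives $|h'(I_x)|\to 0$, whence $x\in U_c^G$.

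For (b), take $x\in U_c^G\cap\Lambda$. Applying Theorem~\ref{t:DKNfree} (item~\ref{i:cover}) to $H$ with respect to its arboreal partition on $X$, all but finitely many points of $\Lambda$ lie in some $U_{d'}^H$. For such $x$, inclusion (a) places $x\in U_{\iota(d')}^G$, and the disjointness of the sets $\{U_c^G\}_c$ on $\Lambda$ established in Lemma~\ref{l:empty_int} forces $c=\iota(d')$; hence $d'$ is one of the indices with $\iota(d')=c$. The ``in particular'' statement is then immediate: the complement of $\bigcup_c U_c^G$ in $\Lambda$ is contained in the complement of $\bigcup_d U_d^H$ in $\Lambda$, which is finite.

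The main technical point is the tree-geometric choice of coset representatives with $T^G\cdot g_i\subseteq T^H$. Without it, the translates $X_d^H\cdot g_i$ may straddle the coarser partition $\{X_c^G\}$ in uncontrolled ways, and the clean implication ``$h'\in W_d^H\Rightarrow g_i h'\in W_c^G$'' would have to be replaced by a delicate case analysis with finitely many exceptions depending on each pair $(d,i)$. The judicious choice of representatives collapses the whole transfer from $H$-ping-pong to $G$-ping-pong to a single subtree inclusion.
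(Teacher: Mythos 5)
Your overall strategy is the paper's: decompose $g=g_ih'$ along coset representatives, control $|g(I_x)|$ by $|h'(I_x)|$ using equicontinuity of the finitely many $g_i$ and the quasi-isometry of word norms, transfer the combinatorial condition from $W_d^H$ to $W_c^G$ on the tree, and then invoke Theorem~\ref{t:DKNfree} for $H$ together with the disjointness from Lemma~\ref{l:empty_int}. This is precisely the content of Lemmas~\ref{l:quasi-isomU} and~\ref{l:inclusionUH} and the short proof of the proposition. However, the step you single out as crucial --- choosing representatives with $T^G.g_i\subseteq T^H$ --- is a genuine gap: such representatives need not exist. The fundamental-domain property of $T^H$ gives, for each \emph{point} $x\in T^G.g$, an element $h$ with $x.h\in T^H$, but this $h$ depends on $x$, and there is no reason a single $h$ should work for all of $T^G.g$. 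Worse, when some vertex $v$ of $T^G$ has nontrivial stabilizer $G_v$, the domain $T^H$ contains only $[G{:}H]/|G_v|$ points of the $G$-orbit of $v$ but $[G{:}H]$ edges from a free edge orbit, so $T^H$ is never a union of $G$-translates of $T^G$, and for some cosets \emph{no} representative has its translate inside $T^H$; this already happens for $\PSL(2,\Z)$ and its derived subgroup, the paper's own running example. The repair is exactly the fallback you mention and dismiss: take arbitrary representatives $\Sigma$, enclose $T^G.\Sigma$ in a bounded set $D$, and observe that once $\|h'\|\ge n_0$ the translate $D.h'$ lies in a single component of $X\setminus T^G$, so the implication $h'\in W_d^H\Rightarrow g\in W_c^G$ holds up to finitely many exceptions --- which is harmless because the defining condition of the sets $U$ is a limit as $\|g\|\to\infty$. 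This is Lemma~\ref{l:quasi-isomU}, and it is not an optional refinement but the necessary route.

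A second, smaller omission: when you apply Theorem~\ref{t:DKNfree} to $H$ and assert that the sets $U_{d'}^H$ cover all but finitely many points of $\Lambda$, you are implicitly using that the minimal invariant set of the finite-index subgroup $H$ coincides with $\Lambda$, the minimal set of $G$. This is true but requires proof (the paper isolates it as Lemma~\ref{l:minimal_subgroup}, passing to a normal finite-index subgroup via Poincar\'e's lemma and using that $\Lambda$ is infinite). Granting these two repairs, your part (b) --- identifying $c=\iota(d')$ via disjointness --- and the ``in particular'' clause are correct and complete.
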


Before giving the proof, we start with some preliminary lemmas.

\begin{lem}\label{l:quasi-isomU}
With notation as above, let $H\subset G$ be a  free subgroup of finite index. Then for any connected component $c\in \pi_0(X\setminus T)$, we have the equality
\[
U^G_c=\left \{
x\in\T\,\middle\vert\,\exists\text{ neighborhood }I_x\ni x\text{ s.t.~}\lim_{n\to\infty}\sup\left\{|g(I_x)|\,;\, g\in H,g\notin W_c^G,\|g\|\ge n\right\}=0
\right \}.
\]
\end{lem}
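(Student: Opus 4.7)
The plan is to prove the two inclusions separately. The easy inclusion, from left to right, is immediate: if $x\in U_c^G$ with witnessing neighbourhood $I_x$, then restricting the supremum defining $U_c^G$ to $g\in H$ only decreases it, so the analogous limit condition over $H$ holds.

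For the converse, let $x$ lie in the set on the right, with neighbourhood $I_x$. I would decompose $G$ into left $H$-cosets, $G=\bigsqcup_{j=1}^{m}h_jH$ with $m=[G:H]$, choosing $h_1=e$ and arranging that each $G$-cell $T^G h_j$ is contained in the $H$-fundamental domain $T^H$; this is possible since every $G$-tile of $X$ sits inside a unique $H$-tile. Every $g\in G$ then writes uniquely as $g=h_jk$ with $k\in H$, and one has the triangle-inequality bound $\|k\|\ge \|g\|-M$ with $M=\max_j\|h_j\|$, together with $|g(I_x)|\le L\,|k(I_x)|$ for a common Lipschitz constant $L$ of the finitely many diffeomorphisms $h_1,\ldots,h_m$.

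The crux will be the following equivalence: for every $k\in H\setminus\{e\}$ and every $j\in\{1,\ldots,m\}$,
\[
h_j k\in W_c^G \quad\Longleftrightarrow\quad k\in W_c^G\cap H.
\]
This is where the freeness of $H$ enters in an essential way: since $H$ is torsion-free, its proper action on $X$ is free, so for $k\neq e$ the translate $T^H k$ is disjoint from $T^H$ and, being connected, is contained in a single component $X_d^H$ of $X\setminus T^H$. By the choice of representatives, $T^G h_j k\subset T^H k\subset X_d^H\subset X_{\iota(d)}^G$, whence $h_j k\in W_{\iota(d)}^G$ independently of $j$. The equivalence then follows from Lemma~\ref{l:finite_index}, which identifies $W_c^G\cap H$ with $\bigsqcup_{\iota(d)=c}W_d^H$.

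Putting the pieces together yields, for every $n>M$,
\[
\sup_{\substack{g\in G\setminus W_c^G\\ \|g\|\ge n}}|g(I_x)|\;\le\;L\cdot \sup_{\substack{k\in H\setminus W_c^G\\ \|k\|\ge n-M}}|k(I_x)|,
\]
where the only $g=h_jk$ with $k=e$ have norm at most $M<n$ and do not contribute. The right-hand side tends to $0$ by hypothesis on $x$, so $x\in U_c^G$. The main obstacle is precisely the equivalence displayed above: without the freeness of $H$, elements inside a single coset $h_jH$ could distribute themselves across different $W_{c'}^G$'s, so that the condition ``$g\notin W_c^G$'' would not translate cleanly into a condition on $k$ alone, and the hypothesis on $x$ could not be invoked.
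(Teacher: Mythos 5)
Your proof is correct in substance and follows the same route as the paper's: decompose $G$ into left cosets of $H$, show that membership in $W_c^G$ is (essentially) determined by the $H$-part of an element, and combine a uniform distortion bound for the finitely many representatives with the norm comparison $\|k\|\ge\|g\|-M$. Two remarks on your middle step, though. First, the representatives cannot in general be chosen with $T^G.h_j\subset T^H$: in the paper's own $\PSL(2,\Z)$ example, $\overline{T^H}$ has seven vertices but $T^H$ must omit two of them, so the closed tile $\overline{T^G}.h_j$ corresponding to an extremal edge necessarily contains a removed vertex, and no other representative of that coset has its tile inside $\overline{T^H}$ at all. What you can arrange is $T^G.h_j\subset\overline{T^H}$, and your equivalence then holds for every $g=h_jk$ with $k\neq e$ lying \emph{outside the finite set} $G^\circ$ of elements fixing a vertex of $T^G$: for such $g$ the set $T^G.g$ is connected, disjoint from $T^G$, and meets an open edge of $T^H.k\subset X_d^H$, hence lies entirely in $X_{\iota(d)}^G$. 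The finitely many exceptional elements have bounded word length and drop out of the supremum for $n$ large, so the conclusion is unaffected; this ``up to a finite subset'' bookkeeping is exactly how the paper phrases the step. Second, the freeness of $H$ is a convenience in your argument rather than a necessity: the paper obtains the same combinatorial equivalence for an \emph{arbitrary} finite-index subgroup by fixing a bounded set $D\supset T.\Sigma$ and observing that $D.h$ lies in a single component of $X\setminus T$ once $\|h\|$ is large enough. Freeness of $H$ is only genuinely needed afterwards, to apply Theorem~\ref{t:DKNfree} to $H$ in the proof of Proposition~\ref{p:freesubgroup}.
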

\begin{proof}
In other words, we want to prove that in order to define the set $U^G_c$ it is enough to take account of the contraction of a neighborhood under the finite-index subgroup $H$.
For this, let $\Sigma=\{t_1,\ldots,t_k\}$ be a set of representatives of $G/H$, that is, any element $g\in G$ may be written in the (unique) form
\[
g=t h,\quad \text{with }h\in H \text{ and }t\in\Sigma.
\]
Let $D$ be a bounded subset in $X$ containing all the images $T.\Sigma$. Then there exists a large integer $n_0\in\N$ (depending on the subset $D$), such that when an element $h\in H$ satisfies $\|h\|\ge n_0$, then the images $D.h$ and $T.h$ are in the same connected component of $X\setminus T$.
For any integer $n\ge n_0$ one has that the following subset of $G$,
\[
\left\{g=th\,\middle\vert\, t\in \Sigma, h\in H, D.h\nsubseteq X^G_c,\|h\|\ge n\right\}\]
coincides with
\[
\left\{g\in G\,\middle\vert\, g\notin W_c^G,\|g\|\ge n\right\},
\]
up to finite subset. Hence the subset
\[
\left\{g=th\,\middle\vert\, t\in \Sigma, h\in H, T.h\nsubseteq X^G_c,\|h\|\ge n\right\}
\]
coincides with
\[
\left\{g\in G\,\middle\vert\, g\notin W_c^G,\|g\|\ge n\right\}
\]
up to  finite subset.

By compactness, for any $\eps>0$ there exists $\delta>0$ such that if a subset $J\subset \T$ verifies $|J|<\delta$, then $|t(J)|<\eps$ for any element $t\in\Sigma$.
For any $\delta>0$, there exists some large integer $n\ge n_0$ such that if an element $h\in H$ verifies $T.h\nsubseteq X^G_c,\|h\|\ge n$ and $I\subset \T$ satisfies $|h(I)|<\delta$, then
\[
|g(I)|=|th(I)|\le \eps. 
\]
This gives the statement.
\end{proof}

Recall that we are denoting by $\iota:\pi_0(X\setminus T^H)\to \pi_0(X\setminus T^G)$ the map induced by the inclusion $T^H\supset T^G$.

\begin{lem}\label{l:inclusionUH}
With the previous notation, if $d\in \pi_0(X\setminus T^H)$ and $c\in\pi_0(X\setminus T^G)$ satisfy $\iota(d)=c$, then we have the inclusion
\[
U^H_d\subset U^G_c.
\]
\end{lem}
\begin{proof}By Lemma \ref{l:finite_index},
we have $W^H_d\subset W^G_c\cap H$. Therefore after Lemma \ref{l:inclusionU} we get
\[
U^H_d\subset U^H_{W^G_c\cap H}.
\]
On the other hand the previous Lemma~\ref{l:quasi-isomU} says that the set $U^H_{W^G_c\cap H}$ coincides with $U^G_c$.
\end{proof}

\begin{lem}\label{l:minimal_subgroup}
Let $G$ be a virtually free group acting on the circle with a minimal invariant set $\Lambda$ which is not a finite set. Let $H\subset G$ be a finite-index subgroup. Then $\Lambda$ is the minimal invariant set for the action of $H$.
\end{lem}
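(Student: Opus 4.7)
My strategy is to reduce to a normal finite-index subgroup. Let $N$ be the \emph{normal core} of $H$ in $G$, that is, $N:=\bigcap_{g\in G}gHg^{-1}$: this is normal in $G$, contained in $H$, and of finite index (being the intersection of the finitely many distinct conjugates of $H$). Any nonempty closed $H$-invariant subset of $\Lambda$ is automatically $N$-invariant, so it suffices to prove that $\Lambda$ is $N$-minimal.

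I would choose a nonempty $N$-minimal closed subset $K\subset \Lambda$ and consider its $G$-translates. Normality of $N$ gives $N\cdot g(K)=g\cdot(g^{-1}Ng)(K)=g(K)$, so each $g(K)$ is $N$-invariant and in fact $N$-minimal. Since $K$ itself is $N$-invariant, $g(K)$ depends only on the coset $gN$, so the $G$-orbit of $K$ consists of only finitely many distinct sets $K=K_1,\ldots,K_r$; two distinct $N$-minimal subsets being disjoint, these $K_i$ are pairwise disjoint. Their union is closed, nonempty, and $G$-invariant, so by $G$-minimality $\bigsqcup_{i=1}^r K_i=\Lambda$. The lemma reduces to proving $r=1$.

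If $\Lambda=\T$, connectedness of the circle forbids a partition into $r\geq 2$ nonempty disjoint closed sets, so $r=1$. Assume now that $\Lambda$ is a Cantor set. Each $K_i$ is then clopen in $\Lambda$ (the complement $\bigsqcup_{j\neq i}K_j$ being closed), hence $K_1$ and $\Lambda\setminus K_1$ are disjoint compact subsets of $\T$ at some positive distance $\delta>0$. I would call a connected component $(a,b)$ of $\T\setminus \Lambda$ a \emph{crossing gap} if exactly one of $a,b$ lies in $K_1$; each such gap has length at least $\delta$, so their number is at most $1/\delta$, hence finite. Since $N$ preserves $K_1$ and the orientation of $\T$, it permutes this finite set of crossing gaps, and the kernel $N_0\subset N$ of this finite permutation action has finite index in $N$, hence in $G$. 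Every $n\in N_0$ fixes each crossing gap setwise; being orientation-preserving, it must fix both of its endpoints. Supposing $r\geq 2$, the set $F\subset \Lambda$ of endpoints of crossing gaps is finite and nonempty, pointwise fixed by $N_0$; thus $G\cdot F$ is a finite (since $[G:N_0]<\infty$), nonempty, closed, $G$-invariant subset of $\Lambda$. By $G$-minimality this forces $G\cdot F=\Lambda$, contradicting the hypothesis that $\Lambda$ is infinite. Hence $r=1$.

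The main obstacle is the Cantor case, where connectedness gives nothing: the argument must manufacture a nontrivial finite $G$-invariant subset of $\Lambda$ out of the decomposition $\Lambda=\bigsqcup K_i$. The mechanism is the interplay between the finite count of crossing gaps (ensured by the positive distance between $K_1$ and its $\Lambda$-complement) and the orientation-preserving character of $N_0$, which collapses the permutation action to the trivial one on endpoints.
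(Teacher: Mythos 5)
Your proof is correct, and its skeleton is the same as the paper's: pass to the normal core $N$ of $H$, take an $N$-minimal set, observe that normality makes its $G$-translates $N$-minimal and that finite index makes them finite in number, and conclude that they all coincide with $\Lambda$. Where you diverge is only in the last step. The paper gets $r=1$ by invoking the classical fact that an infinite minimal set of a group of circle homeomorphisms (the whole circle or an exceptional Cantor minimal set) is \emph{unique}, so all the translates, being infinite $N$-minimal sets, must be equal. You instead reprove the relevant instance of that uniqueness by hand: connectedness in the case $\Lambda=\T$, and in the Cantor case the ``crossing gap'' mechanism, where the positive distance between the clopen pieces bounds the number of crossing gaps, and orientation-preservation turns their endpoints into a finite $G$-invariant subset of $\Lambda$, contradicting minimality. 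This buys you a self-contained, elementary argument at the cost of a few lines; the paper's version is shorter but leans on the standard trichotomy for minimal sets. The only micro-gap in your write-up is the unproved assertion that, when $r\ge 2$, at least one crossing gap exists; this follows from the usual sup/inf argument locating a complementary interval between the two disjoint compact sets $K_1$ and $\Lambda\setminus K_1$, and is worth a sentence.
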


\begin{proof}
By the so-called Poincar\'e lemma, if the group $G$ contains a subgroup $H$ of finite index, then it contains a normal subgroup $H'$ of finite index, which moreover is contained in $H$ (one defines $H'$ to be the intersection of the conjugates of $H$).
Let $\Lambda'$ be a minimal invariant set for $H'$. As $H'$ is normal in $G$, all the images $g(\Lambda')$ are invariant sets for $H'$. As the subgroup $H'$ has finite index in $G$, there are at most finitely many different images $g_1(\Lambda'),\ldots,g_n(\Lambda')$. Moreover, they must verify
\[
\Lambda=g_1(\Lambda')\cup \cdots \cup g_n(\Lambda').
\]
Since $\Lambda$ is infinite, so must be $\Lambda'$ and therefore $\Lambda'$ is the unique minimal invariant set for $H'$. Therefore $G$ preserves $\Lambda'$ and so $\Lambda'=\Lambda$, as wanted.
\end{proof}

\begin{proof}[Proof of Proposition~\ref{p:freesubgroup}]
From Lemma~\ref{l:inclusionUH} we get the inclusion
\[
U_c^G\supset\bigsqcup_{\iota(d)=c}U_d^H.
\]
On the other hand Theorem~\ref{t:DKNfree} applied to the subgroup $H$ tells that the partition $U_d^H$ covers all the minimal set $\Lambda$ but finitely many points (after Lemma~\ref{l:minimal_subgroup}, the minimal set for $H$ is the same as for $G$).
\end{proof}
This clearly implies Proposition~\ref{p:finite} and also concludes the proof of Theorem~\ref{t:DKNmarkov3}.

\subsection{Making the DKN partition a ping-pong partition}

After having established Theorem~\ref{t:DKNmarkov3}, we want to prove that the partition into subsets $\{U_c\}_{c\in\pi_0(X\setminus T)}$ gives an interactive family in the sense of Definition~\ref{d:generalized_ping-pong}. This is an immediate consequence of the fact that the DKN partition comes from the arboreal partition, preserving the inclusion relations (Lemmas \ref{l:top-alg} and \ref{l:inclusionU}), and we have already remarked that the latter defines a ping-pong partition (Proposition~\ref{p:arboreal_ping-pong}). However, some extra verification is needed to prove that the interactive family is proper: condition (IF~\ref{pp10}) does not follow directly from Proposition~\ref{p:arboreal_ping-pong}, but is a consequence of the dynamics of the action (Theorem~\ref{t:DKNmarkov3}).

By Theorem~\ref{t:DKNmarkov3}, the open subsets $U_c$ are individually the union of finitely many open intervals, their union covers the minimal set $\Lambda$ except a finite number of points, and two of them can intersect only in the complement of $\Lambda$. The fact that two subsets $U_c$ may intersect gives problem to properly define a ping-pong partition. We contour this by \emph{reducing} the subsets $U_c$ similarly as done after Theorem \ref{t:DKNfree}: if an endpoint of a connected component of $U_c$ belongs to a \emph{gap} $J\subset \T\setminus \Lambda$ (that is, a connected component of the complement $\T\setminus \Lambda$), we remove $U_c\cap \overline J$ from $U_c$. As $U_c$ has finitely many connected components, we only have to do this operation finitely many times. We denote by $\hat U_c\subset U_c$ the open subset obtained at the end of this removal process. Using the invariance of the set $\Lambda$, it is not difficult to verify that the new collection $\hat U_c$ also verifies the list of properties of Theorem~\ref{t:DKNmarkov3}, with the additional property that all the subsets $\hat U_c$ are now pairwise disjoint, and the complement of their union is a non-empty finite union of gaps and finitely many points of $\Lambda$. (This reduction leads to a partition which is closer to those appearing in \cite{BP}.)

Next, we gather together different subsets $\hat U_c$ to obtain an interactive family, following the procedure described in \S\ref{sc:arboreal_ping-pong} for the arboreal partition. This gives a family of open subsets $\Theta_{DKN}=\{U^e_v,V_s\}_{v\in V,s\in S}$ of the circle that we call the \emph{DKN ping-pong partition} 
We denote by $\cI$ the collection of connected components of elements of $\Theta_{DKN}$ and by $\cJ$ the collection of gaps of $\cI$ (see Definition \ref{d:gaps}).

Let us verify that $\Theta_{DKN}$ is indeed a ping-pong partition, in the sense of Definition \ref{d:markov-partition}.

%PPP1
\begin{lem}\label{l:DKN_IF}
	Let $G\subset \Diff^\omega_+(\T)$ be a locally discrete, virtually free group of real-analytic circle diffeomorphisms. For any marking $(\alpha:G\to \Isom_+(X),T)$, the DKN  ping-pong partition $\Theta_{DKN}=\{U^e_v,V_s\}_{v\in V,s\in S}$ verifies \emph{(PPP~\ref{ppp1})}.
\end{lem}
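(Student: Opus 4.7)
The plan is to deduce (PPP \ref{ppp1}) by transferring the arboreal ping-pong structure (Proposition \ref{p:arboreal_ping-pong} and Remark \ref{r:strongIF4}) from the tree $X$ to the circle $\T$ using the map $E\mapsto U_E$. The key tools are Lemma \ref{l:top-alg}, which gives the equivariance $U_{Eh}=h^{-1}(U_E)$, and Lemma \ref{l:inclusionU}, which gives the monotonicity $E\subset F\Rightarrow U_E\subset U_F$. Combined with Proposition \ref{p:inclusionW}, these imply that any arboreal inclusion $X_A\cdot g^{-1}\subset X_B$, where $X_A,X_B$ are unions of connected components of $X\setminus T$, translates into a circle inclusion $g(U_A)\subset U_B$, with $U_A,U_B$ the corresponding unions of the sets $U_c$.

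First I would run through the axioms (IF \ref{pp2})--(IF \ref{pp8}) and the strengthened form of (IF \ref{pp4}) required by (PPP \ref{ppp1}). Each of these is an inclusion statement, hence, by the transfer principle, it follows at once from its arboreal counterpart, using the grouping described in \S\ref{sc:arboreal_ping-pong} that identifies $X_v^e$ with $U_v^e$ and $Z_s$ with $V_s$. In particular, the invariance $\alpha_s(A_s)(U_v^s)=U_v^s$ transfers from the analogous equality $\alpha_s(A_s)(X_v^s)=X_v^s$ on the tree, and the $U_v^e$ for $e\in\St_T(v)$ automatically play the role of the (IF \ref{pp5})-subsets.

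Next I would verify that the reduction $U_c\mapsto\hat U_c$ preserves all these inclusions and makes the $U_v^e$ pairwise disjoint, which is the third additional requirement of (PPP \ref{ppp1}). For preservation, I would use that $\hat U_c$ consists of open intervals whose endpoints lie in $\Lambda$, the complement $U_c\setminus\hat U_c$ being a finite union of arcs contained in gaps of $\Lambda$; since every $g\in G$ preserves $\Lambda$ and hence permutes its gaps, the image $g(\hat U_c)$ meets no gap-arc that was removed from $U_d$, so $g(U_c)\subset U_d$ forces $g(\hat U_c)\subset\hat U_d$. Pairwise disjointness of the $U_v^e$ then follows from the analogous disjointness of the $\hat U_c$'s, arranged in the construction by trimming away all potential overlaps in $\T\setminus\Lambda$.

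The last point is the non-emptiness demanded by (IF \ref{pp1}). I would invoke Theorem \ref{t:DKNmarkov3}: the union of the $\hat U_c$ covers $\Lambda$ up to finitely many points, and for each $s\in S$ the attracting fixed point of a suitable iterate of $s$, viewed as a hyperbolic element acting on $\T$, lies in $\Lambda\cap V_s$; a parallel argument applied to a non-trivial element of $G_v\setminus\alpha_e(A_e)$ handles $U_v$ under the hypothesis of (IF \ref{pp1}). I expect this non-emptiness clause to be the main obstacle, since, unlike the purely formal inclusion axioms, it requires genuine dynamical input to locate at least one point of $\Lambda$ inside each prescribed atom of the partition.
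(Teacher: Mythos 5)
Your proposal follows essentially the same route as the paper: transfer the arboreal interactive family (Proposition~\ref{p:arboreal_ping-pong}, Remark~\ref{r:strongIF4}) to the circle via Lemmas~\ref{l:top-alg} and \ref{l:inclusionU} together with item 5.~of Theorem~\ref{t:DKNmarkov3}, then observe that the reduction $U_c\mapsto \hat U_c$ only removes arcs lying in gaps of the $G$-invariant set $\Lambda$ and therefore preserves all inclusion relations while achieving disjointness. The only (minor) divergence is your fixed-point argument for non-emptiness, where the paper relies instead on the covering property of Theorem~\ref{t:DKNmarkov3} combined with the ping-pong inclusions; your version works but needs care with the left/right action conventions to place the correct fixed point inside $V_s$.
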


\begin{proof}
Properties (IF \ref{pp1},\ref{pp5}) are satisfied by the DKN ping-pong partition,  by construction. After \ref{i:inclusion_relation}.~in Theorem~\ref{t:DKNmarkov3}, and Lemmas \ref{l:top-alg} and \ref{l:inclusionU}, the dynamics on the subsets $U_c$ behaves exactly as that on the connected components $X_c$ of $X\setminus T$ (where $X$ is the tree of the marking). We have already proved (Proposition \ref{p:arboreal_ping-pong}) that the arboreal ping-pong partition $\{X_v,Z_s\}$ defines an interactive family, and thus we obtain directly from \ref{i:inclusion_relation}.~in Theorem~\ref{t:DKNmarkov3}, Lemmas \ref{l:top-alg} and \ref{l:inclusionU} that the corresponding collection defined with the subsets $U_c$ satisfies the inclusion relations (IF \ref{pp2}--\ref{pp4},\ref{pp6}--\ref{pp8}) (but remember that they are not necessarily disjoint). Next, we observe that the subsets $\hat U_c$ defining the $U^e_v,V_s$ are obtained from the subsets $U_c$ by cutting them at well-chosen points of the minimal invariant subset $\Lambda$, which is $G$-invariant. This implies that the same inclusion relations  (IF \ref{pp2}--\ref{pp4},\ref{pp6}--\ref{pp8}) that are valid for the collections of $U_c$ are also valid for the collections $U^e_v,V_s$ of $\hat U_c$. The extra three conditions appearing in (PPP~\ref{ppp1}) are also satisfied by construction of $\Theta_{DKN}$, as they are valid for the the arboreal partition (Remarks~\ref{r:extra} and \ref{r:strongIF4}).
\end{proof}

%Ppp6
That every element of $\Theta_{DKN}$ is the union of finitely many intervals is a direct consequence of Item~\ref{i:finite_union_DKN}.~of Theorem~\ref{t:DKNmarkov3}. So (PPP~\ref{ppp2}) is also verified. 

%PPP3
\begin{lem}\label{l:DKN_PPP3}
	Under the hypotheses of Lemma \ref{l:DKN_IF}, the partition $\Theta_{DKN}$ satisfies condition \emph{(PPP~\ref{ppp3})} of Definition \ref{d:markov-partition}.
\end{lem}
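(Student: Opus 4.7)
The plan is to verify (PPP~\ref{ppp3}) by combining the inclusion relations already established in Lemma~\ref{l:DKN_IF} with a careful tracking of how endpoints of connected components of atoms behave under generators.

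Let $I = (a,b)$ be a connected component of some atom $O \in \Theta_{DKN}$, and fix $g \in \cG$. By Lemma~\ref{l:DKN_IF}, the interactive family condition (PPP~\ref{ppp1}) singles out an atom $O' \in \Theta_{DKN}$ with $g(O)\subset O'$, and in particular $g(I) \subset O'$. Since atoms are pairwise disjoint and $g(I)$ is a connected open interval, either $g(I)$ is contained in a single connected component of $O'$ (first alternative of (PPP~\ref{ppp3})), or $g(I)$ meets a consecutive finite sequence $I'_0, \dots, I'_m$ of connected components of $O'$. In the latter case, the portions of $g(I)$ lying between $I'_k$ and $I'_{k+1}$ are contained in the complement $\T \setminus \bigcup_{O''\in\Theta_{DKN}} O''$; since this complement coincides with $\bigcup_{J\in\cJ} J$ and each gap is closed while $g(I)$ is a connected open interval, these portions must be whole gaps $J_{k+1}\in \cJ$.

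It remains to verify that $g(a)$ is precisely the left endpoint of $I'_0$ and $g(b)$ precisely the right endpoint of $I'_m$, i.e.\ that $g$ sends endpoints of components of atoms to endpoints of components of atoms. The endpoints $a, b$ necessarily lie in $\Lambda$ by construction of the reduction $U_c\leadsto \hat U_c$, which trims off intersections with gaps of $\T \setminus \Lambda$. Each such endpoint is of one of two types: (i) an endpoint of a gap $J$ of $\T\setminus\Lambda$ adjacent to $\hat U_c$, or (ii) a point in $\Lambda$ sitting on the common boundary of $\hat U_c$ with a neighbouring $\hat U_{c'}$ or with one of the finitely many exceptional points of $\Lambda\setminus\bigcup_d \hat U_d$. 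In type (i), $g$ maps $J$ to another gap $g(J)$ of $\T\setminus\Lambda$ (since $G$ preserves $\Lambda$), and the reduction process applied to the atom containing $g(I)$ cuts its components at precisely the endpoints of $g(J)$; this forces $g(a)$ (resp.~$g(b)$) to be exactly an endpoint of a component of $O'$. In type (ii), we use the inclusion relation~\ref{i:inclusion_relation}.\ of Theorem~\ref{t:DKNmarkov3}, transported by Lemmas~\ref{l:top-alg} and \ref{l:inclusionU}, which preserves adjacency relations inherited from the arboreal partition (Proposition~\ref{p:arboreal_ping-pong}): a boundary point between two $\hat U_{c'}, \hat U_{c''}$ maps to a boundary point between the corresponding image atoms, and an endpoint touching an exceptional point maps to the analogous configuration. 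Either way $g(a), g(b)$ are endpoints of components of $O'$, and combined with the preceding paragraph this yields exactly the Markovian equality $g(I) = I'_0 \cup J_1 \cup I'_1 \cup \dots \cup J_m \cup I'_m$.

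The main obstacle is the type~(ii) analysis: unlike the gaps of $\T\setminus\Lambda$, the exceptional set $\Lambda\setminus\bigcup_d \hat U_d$ is not obviously $G$-invariant, and one must argue via the arboreal ping-pong structure on the tree that each such boundary configuration is preserved under generators. This is exactly where the identification with the arboreal partition via $U_c = U_{W_c}$ pays off: the combinatorics on the tree dictate how endpoints get permuted, and the translation is provided by Lemmas~\ref{l:top-alg}--\ref{l:inclusionU}.
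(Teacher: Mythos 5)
Your argument rests on a premise that fails exactly in the case that matters. In the first paragraph you assert that the interactive family structure ``singles out an atom $O'$ with $g(O)\subset O'$'', and you then use $g(I)\subset O'$ together with disjointness of atoms to conclude that the material between consecutive components met by $g(I)$ consists of gaps. But the inclusions (IF~\ref{pp2}--\ref{pp8}) only provide such a containment for the \emph{contracting} pairs $(g,O)$; for the expanding pairs --- $g\in G_v$ acting on a component of $U_v^e$, or $g\in\alpha_{\overline s}(A_{\overline s})\overline s$ acting on a component of $V_s$, which by Lemma~\ref{l:Markovian_image} are precisely the pairs for which the Markovian alternative of (PPP~\ref{ppp3}) is at stake --- the image $g(O)$ is \emph{not} contained in any atom. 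For instance, for $\Z_2\ast\Z_3$ the involution $a\in G_{v_1}$ sends $U_{v_1}$ onto a set containing the whole of $U_{v_2}$ together with the gaps separating its components, and in general $g(U_v^e)$ spreads over an entire atom plus gaps (this is visible already on the tree: $X_c.g^{-1}$ is a union of many components of $X\setminus T$ when $e_c.g^{-1}$ points into $T$). So your dichotomy never reaches the case it is meant to treat. A second gap is the endpoint analysis: in type~(i) you assume that the reduction cuts the image partition exactly at the endpoints of $g(J)$, which is unjustified if a pre-reduction component of the target $U_d$ straddles the gap $g(J)$ (then $g(a)$ would sit in the \emph{interior} of a component, and nothing you wrote excludes this); and for type~(ii) you yourself flag the difficulty --- the exceptional set $\Lambda\setminus\bigcup_d\hat U_d$ is not $G$-invariant --- but resolve it only by an appeal to ``the combinatorics on the tree'', which is not an argument.

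The paper's proof avoids endpoint bookkeeping altogether with one observation you did not exploit: after the reduction $U_c\leadsto\hat U_c$, every endpoint of every $I\in\cI$ lies in $\Lambda$ and is accumulated \emph{from inside $I$} by $\Lambda$; since $\Lambda$ is $G$-invariant and each gap $J\in\cJ$ meets $\Lambda$ in at most finitely many points (a gap is a point or essentially the closure of a component of $\T\setminus\Lambda$), the image $g(I)=(x_-,x_+)$ can never have a right (resp.\ left) neighbourhood of $x_-$ (resp.\ $x_+$) contained in a gap --- otherwise $\Lambda$ would have an isolated point. Combined with the fact that two sets $U_{W_cg^{-1}}$ and $U_{W_{c'}}$ meet $\Lambda$ only when the corresponding subtrees are nested (so that any $I'\in\cI$ meeting $g(I)$ in a point of $\Lambda$ either contains $g(I)$ or is contained in it), this forces the Markovian decomposition directly. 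I would encourage you to restructure the proof around this accumulation property of $\Lambda$ at the endpoints, reducing first to the two expanding cases via Lemma~\ref{l:Markovian_image}, rather than trying to track where individual endpoints are sent.
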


\begin{proof}
	By the properties of an interactive family (that we have verified in Lemma \ref{l:DKN_IF}), we are reduced to consider the two cases for which it is not clear that a contraction occurs.
	
	\begin{claim}
		If $I$ is a connected component of $U^e_v$  and $g\in G_v$ then either $g(I)$ is contained in an interval $I'\in\cI$ or $g(I)$ is $\cI$-Markovian.
		
		Similarly, if $I$ is a connected component of $V_s$ and $g\in \alpha_{\overline s}(A_{\overline s})\overline s$, then either $g(I)$ is contained in an interval $I'\in\cI$ or $g(I)$ is  $\cI$-Markovian.
	\end{claim}
	
	\begin{proof}[Proof of Claim]
		We only treat the first situation, the second one is completely analogous.
		We want to prove that if the image $g(I)$ contains points of a gap, then it must be $\cI$-Markovian. This follows from the definition of the DKN partition $\Theta_{DKN}$: as gaps in $\cJ$ are points or the closure of connected components of the complement of $\Lambda$, the image $g(I)=(x_-,x_+)$ cannot satisfy that a right (respectively, left) neighborhood of $x_-$ (respectively, $x_+$) is contained in a gap, otherwise $\Lambda$ would have isolated points.
	\end{proof}
	This gives (PPP \ref{ppp3}) for the partition $\Theta_{DKN}$.
\end{proof}

Summarizing, we have:

\begin{prop}\label{p:DKN_is_pingpong}
	Let $G\subset \Diff^\omega_+(\T)$ be a locally discrete, virtually free group of real-analytic circle diffeomorphisms. For any marking $(\alpha:G\to \Isom_+(X),T)$, the DKN ping-pong partition is a ping-pong partition (in the sense of Definition~\ref{d:markov-partition}).
\end{prop}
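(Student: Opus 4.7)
The plan is to observe that this proposition is essentially an assembly result: the hard dynamical content has already been extracted in the preceding lemmas, and what remains is only to verify that $\Theta_{DKN}$ meets each of the three clauses of Definition~\ref{d:markov-partition} by pointing to the appropriate earlier statement.

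First, I would address (PPP~\ref{ppp1}). This is exactly Lemma~\ref{l:DKN_IF}: the interactive-family inclusion relations (IF~\ref{pp1}--\ref{pp8}) for $\{U_v^e,V_s\}$ are transferred from the arboreal interactive family $\{X_v,Z_s\}$ (Proposition~\ref{p:arboreal_ping-pong}) to the DKN partition via the dictionary $W_c g^{-1}\subset W_d \Longleftrightarrow g(U_c)\subset U_d$ provided by Lemmas~\ref{l:top-alg}, \ref{l:inclusionU}, and item~\ref{i:inclusion_relation} of Theorem~\ref{t:DKNmarkov3}. The three extra conditions in (PPP~\ref{ppp1}) also hold: the distinguished subsets $U_v^e$ are defined exactly as in the arboreal case (Remark~\ref{r:strongIF4}), and disjointness of the $\hat U_c$ after the reduction step makes the $U_v^e$ pairwise disjoint as $e$ varies over $\St_{\overline X}(v)$.

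Second, I would verify (PPP~\ref{ppp2}): each atom of $\Theta_{DKN}$ is a finite union of open intervals. This is immediate from item~\ref{i:finite_union_DKN} of Theorem~\ref{t:DKNmarkov3} (each $U_c$ is already a finite union of intervals), and the reduction $U_c \leadsto \hat U_c$ only cuts each component at finitely many endpoints lying in gaps of $\T\setminus \Lambda$, preserving finiteness.

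Third, for (PPP~\ref{ppp3}) I would cite Lemma~\ref{l:DKN_PPP3}: whenever a generator $g$ from the preferred system $\cG$ sends a connected component $I$ of an atom so that $g(I)$ meets a gap, the image must actually be $\cI$-Markovian. The content here is that the endpoints of $g(I)$ cannot lie in the interior of a gap $J\in\cJ$, because gaps are either points of $\Lambda$ or closures of components of $\T\setminus\Lambda$, and having $g(I)$ terminate strictly inside such a $J$ would create isolated points in $\Lambda$ (impossible by Proposition~\ref{p:MinimalSet}, since $G$ is not virtually cyclic). The other alternative, $g(I)\subset O'$ for some $O'\in\Theta_{DKN}$, is exactly what (PPP~\ref{ppp3}) allows.

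Since all three clauses are thus accounted for by invocations of earlier results, there is no genuine obstacle; the ``main step'' is really the bookkeeping in Lemma~\ref{l:DKN_IF} that the arboreal interactive family transports to the circle, which is already done. I would therefore conclude the proof in a single line, remarking that the combination of Lemma~\ref{l:DKN_IF}, item~\ref{i:finite_union_DKN} of Theorem~\ref{t:DKNmarkov3}, and Lemma~\ref{l:DKN_PPP3} yields all three conditions of Definition~\ref{d:markov-partition}.
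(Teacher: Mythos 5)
Your proposal is correct and follows exactly the paper's own route: the paper states this proposition as a summary immediately after establishing (PPP~\ref{ppp1}) in Lemma~\ref{l:DKN_IF}, (PPP~\ref{ppp2}) via item~\ref{i:finite_union_DKN} of Theorem~\ref{t:DKNmarkov3}, and (PPP~\ref{ppp3}) in Lemma~\ref{l:DKN_PPP3}. Your assembly of these three ingredients, including the isolated-points argument for the Markovian alternative, matches the paper's reasoning.
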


\subsection{Getting a proper ping-pong partition}

We were not able to find an elementary argument to show that the DKN partition defines a \emph{proper} ping-pong partition (and we are not even sure that this is always the case!). Anyhow, refining the DKN ping-pong partition as in \S\ref{sc:refinement_ping-pong} gives what we need.

\begin{prop}\label{p:DKN_is_proper}
	Let $G\subset \Diff^\omega_+(\T)$ be a locally discrete, virtually free group of real-analytic circle diffeomorphisms. For any marking $(\alpha:G\to \Isom_+(X),T)$, there exists some $k\in\N$ such that after refining $\Theta_{DKN}$ $k$ times, one gets a proper interactive family.
\end{prop}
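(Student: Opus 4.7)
The plan is to verify, after sufficiently many refinements of $\Theta_{DKN}$, the three extra conditions for properness (IF \ref{pp9}--\ref{pp11}) of Definition \ref{d:generalized_ping-pong}. Condition (IF \ref{pp11}) is vacuous under our standing convention: it only applies to the degenerate case in which the graph of groups yields $G \cong G_0 \rtimes \mathbb{Z}$, which forces $G$ to be virtually cyclic and is thus excluded.

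Condition (IF \ref{pp9}) will be automatic from the real-analytic setup. Since the action $\alpha$ is proper, all vertex groups $G_v$ (and therefore all edge groups) are \emph{finite} subgroups of $\Diff_+^\omega(\T)$. By a classical theorem (due to Kerékjártó, or equivalently by averaging trick), every such finite subgroup is topologically conjugate to a group of rotations; in particular, each non-identity torsion element acts on $\T$ without fixed points. Consequently, the restriction of $\alpha_s(A_s)$ to any non-empty $Z_s$ (and of $\alpha_e(A_e)$ to any non-empty $X_{o(e)}^e$) is necessarily faithful. Non-emptiness of these sets is preserved by refinement, because refinement only removes the \emph{finitely many} new gaps $\widetilde\cJ \setminus \cJ$ from atoms that accumulate on the infinite set $\Lambda$ (Proposition \ref{p:MinimalSet}); in particular a single refinement suffices for this condition.

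The main difficulty is (IF \ref{pp10}): we must produce a refinement level $k$, a vertex $w \in V$, and a point $p \in \widetilde X_w^{(k)}$ not lying in any of the images coming from (IF \ref{pp4}, \ref{pp7}, \ref{pp8}). My strategy is a contradiction argument. Write $R_w^{(k)} \subset \widetilde X_w^{(k)}$ for the union of those images at level $k$, and suppose that $R_w^{(k)} = \widetilde X_w^{(k)}$ for every $k$ and every $w$. Then any point $p \in \Lambda$ lying in some $\widetilde X_w^{(k)}$ can be written as $p = g_1(q_1)$ for some $g_1 \in \cG$ and some $q_1$ in an atom $O_1 \in \wTh^{(k)}$ with $O_1 \subset \widetilde X_{w_1}^{(k)}$ or $\widetilde V_{s_1}$. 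Iterating, we obtain a sequence $g_1, g_2, \ldots \in \cG$ such that the preimages $p_j = (g_1 \cdots g_j)^{-1}(p)$ stay inside atoms of the refined partition. Lemmas \ref{l:Markovian_image} and \ref{r.gen} sharply restrict which such factorizations can occur (essentially forcing $g_j$ to belong to a specific vertex or edge stratum), and the DKN contraction estimates underlying Theorem \ref{t:DKNmarkov3} guarantee that a neighborhood of $p$ is contracted uniformly along the concatenation $g_1 \cdots g_j$, with rate going to $0$ as $j \to \infty$. Combining this contraction with local discreteness of $G$ in $\Diff_+^\omega(\T)$ should produce a nontrivial element acting as the identity on an interval intersecting $\Lambda$, contradicting local discreteness.

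The hardest part, which I expect to be the main obstacle, is to extract from this heuristic an effective bound on the refinement level $k$ at which coverage must break down. Two ingredients will be crucial: first, a counting argument bounding, for each level $k$, the number of pairs $(g, O) \in \cG \times \wTh^{(k)}$ which can contribute to $R_w^{(k)}$ (so that $R_w^{(k)}$ is a finite union of atoms and gaps of $\widetilde\cI^{(k)}$ with a controllable multiplicity); second, a quantitative version of the affine-distortion control of \cite{DKN2014} applied to the refined partition, which provides uniform contraction rates for the pullbacks constructed above. Together they should force the failure of $R_w^{(k)} = \widetilde X_w^{(k)}$ at a finite $k$ depending only on the marking and on the DKN data.
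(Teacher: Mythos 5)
Your treatment of (IF~\ref{pp9}) and (IF~\ref{pp11}) matches the paper's: the edge groups are finite cyclic subgroups of $\Homeo_+(\T)$, so nontrivial elements have no fixed points and act faithfully on any non-empty invariant set, and (IF~\ref{pp11}) is excluded because it would force $G$ to be virtually cyclic. The problem is (IF~\ref{pp10}), which is the entire content of the proposition, and there your argument is not a proof: you set up a contradiction scheme (iterated pullbacks staying in atoms, uniform contraction from the DKN estimates, a violation of local discreteness) and then explicitly defer the key step, namely turning this into an actual failure of coverage at some finite level $k$. As written, the mechanism is not established --- local discreteness is a statement about the identity being isolated in $G\vert_I$, and you have not produced a sequence of distinct elements converging to the identity on an interval meeting $\Lambda$, only the assertion that contraction ``should'' yield one. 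Moreover you are chasing an \emph{effective} bound on $k$, which is not needed; mere existence suffices.

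The paper's argument for (IF~\ref{pp10}) is a short density argument that your proposal misses entirely. By construction of $\Theta_{DKN}$ the set $\Delta_0$ of endpoints lies in the minimal set $\Lambda$, and the endpoint sets of the successive refinements satisfy $\Delta_k=\cG(\Delta_{k-1})$, so $\bigcup_k\Delta_k$ is the $G$-orbit of $\Delta_0$ and hence dense in $\Lambda$. Since some $U_v$ is a non-empty open set meeting $\Lambda$ (Theorem~\ref{t:DKNmarkov3}), there is a $k$ with $\Delta_k\cap U_v\neq\emptyset$; this means some $\cG$-image of a gap of the previous level lands inside a connected component of $U_v$ rather than inside some $V_s$ or in the gap collection, and that image is exactly the point missed by the union of the images from (IF~\ref{pp4},\ref{pp7},\ref{pp8}). (The only dynamical input is the observation that the gap collection $\cJ$ cannot be $\cG$-invariant, since that would produce a finite orbit, impossible for a locally discrete, non--virtually cyclic group.) I would recommend replacing your contraction heuristic by this argument; in its current form the proposal has a genuine gap at its central step.
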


\begin{proof}
By Proposition \ref{p:refinement_is_ping-pong} and Lemma~\ref{l:DKN_IF}, every refinement of the DKN ping-pong partition is an interactive family.
For $k\ge 1$ we denote by $\Theta_{k}$ the $k$-th refinement of $\Theta_{DKN}$.
The verification of (IF~\ref{pp9}) that the edge groups act faithfully is an easy consequence of the fact that they are finite cyclic groups acting on the circle (hence every nontrivial element has no fixed point).	
Observe also that the circumstance under which (IF~\ref{pp11}) must be verified never occurs under our assumptions, as this would imply that $G$ is virtually cyclic (Remark \ref{pipipi}).

We want to prove that there exists $k\in \N$ such that $\Theta_k$ satisfies (IF~\ref{pp10}).
If $U_v=\emptyset$ for every vertex $v\in V$, then there is nothing to check. So we can assume $U_v\neq \emptyset$ for some vertex $v\in V$.
Verifying (IF~\ref{pp10}) for $\Theta_{DKN}$ comes up to verify that there exists a gap $J\in\cJ$ which is sent inside a connected component $I\in\cI$ of an open set $U_v$, for some $v\in V$, by some generator $g\in G_v$.
Observe first that a gap $J\in \cJ$ must be sent inside some interval $I\in\cI$ by some generator $g\in\cG$, otherwise the collection of gaps $\cJ$ would be $G$-invariant, giving a finite orbit for the action of $G$ (recall that this does not occur because $G$ is locally discrete and not virtually cyclic).
What may happen, invalidating (IF~\ref{pp10}),  is that for every generator $g\in\cG$ and gap $J\in\mathcal J$, the image $g(J)$ either belongs to $\cJ$ or is contained in an open set $V_s$, for some $s\in S$ (with this last possibility occurring at least for one gap, by the previous remark).
To contour this, we consider refinements $\Theta_k$ of $\Theta_{DKN}$. Indeed, by construction of $\Theta_{DKN}$, all points of $\Delta_0$ belong to $\Lambda$ and,
denoting by $\Delta_k$ the set of endpoints of $\Theta_k$,
 as $\Delta_k=\cG(\Delta_{k-1})$ (apply \eqref{eq:refinement_Delta} inductively), one has that the union $\bigcup_{k\in\N}\Delta_k$ is the $G$-orbit of $\Delta_0$, which is dense in $\Lambda$.
 As $U_v$ is an open set, which contains points of $\Lambda$ (Theorem \ref{t:DKNmarkov3}), there exists $k\in\N$ such that $\Delta_k\cap U_v\neq\emptyset$, so that for such $k$, the $k$-th refinement $\Theta_k$ of $\Theta_{DKN}$ is proper.
\end{proof}

\begin{rem}
	One can be  more precise and show, using combinatorial arguments, that the \emph{first refinement} $\Theta_1$ is always proper.
\end{rem}

We are finally in position to give the proof of the main result.

\begin{proof}[Proof of Theorem~\ref{t:main}]
Let $\Theta_k$ be the proper $k$-th refinement of $\Theta_{DKN}$ given by Proposition \ref{p:DKN_is_proper}. 
After Proposition~\ref{p:DKN_is_proper} and Proposition \ref{p:refinement_is_ping-pong}, the partition $\Theta_k$ is the refinement of a ping-pong partition, and hence a ping-pong partition itself.
\end{proof}

\section{Examples and non-examples of ping-pong partitions}
\label{s.examples}

In this last section, we give basic examples of ping-pong partitions and point out some difficulties and subtleties related to this notion.

\subsection{Some examples} The most basic examples of groups acting with ping-pong partitions are free groups (see Section \ref{s:DKN}). We refer to \cite[Section  3]{MarkovPartitions2} and \cite{isolated} for classical and exotic examples of such actions. The examples in Figure \ref{Example1} are taken from \cite[Section  3]{MarkovPartitions2}.

\begin{figure}[ht!]
	\[
	\includegraphics{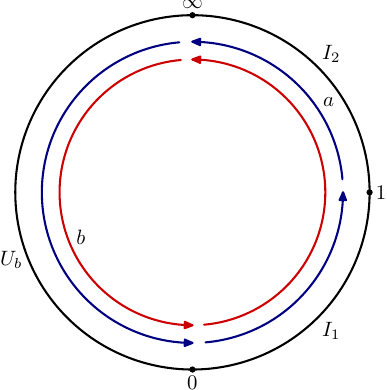}\hspace{2em}
	\includegraphics{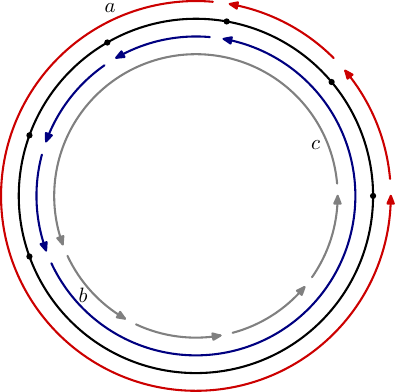}
	\]
	\caption{On the left: the natural action of $\PSL(2,\Z)\cong\Z_2*\Z_3$ associated with the Farey tesselation of the disc. On the right: an action of the free product $\Z_3*\Z_4*\Z_5$}\label{Example1}
\end{figure}

Even for the simplest free product $\PSL(2,\Z)\cong\Z_2*\Z_3$, it is possible to exhibit examples of ping-pong partitions which are \emph{exotic} (not semi-conjugated to the one in Figure \ref{Example1}, left). This has been observed by Matsumoto \cite[Section 14]{matsumoto-psl}. A not-very-enlightening picture is given in Figure \ref{matsumoto}. Here $a$ denotes the generator of the free factor $\Z_3$ and $b$ the generator of the other free factor. The ping-pong partition $\Theta=\{U_a,U_b\}$ consists of 27 disjoint intervals: the generator $a$ moves any interval of this partition cyclically across 9 intervals, while the generator $b$ moves any of the 9 connected component of $U_b$ either to a single connected component of $U_a$, or to the union of two adjacent components of $U_a$ and their common boundary point.

\begin{figure}[ht!]
	\[
	\includegraphics{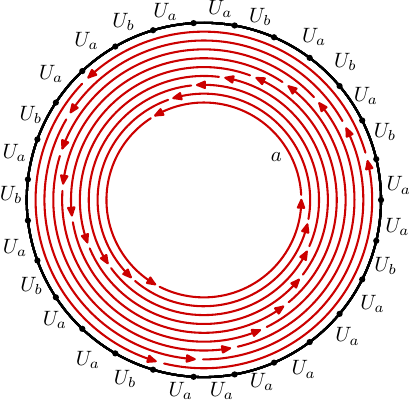}\hspace{2em}
	\includegraphics{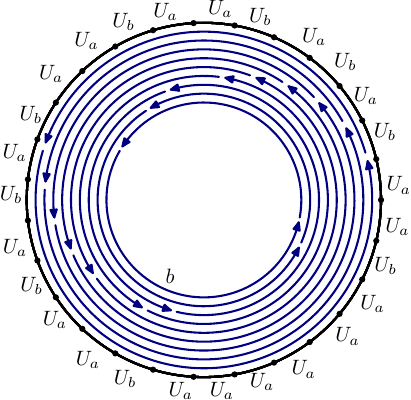}	
	\]
	\caption{The exotic ping-pong partition for $\PSL(2,\Z)\cong\Z_2*\Z_3$ described by Matsumoto.}\label{matsumoto}
\end{figure}

Of course there are examples beyond free groups and free products: after Theorems \ref{t:main} and \ref{t:RealVirtFree}, virtually free groups acting on the circle with a ping-pong partition are exactly the free-by-finite cyclic groups. Below we give examples of an amalgamated product and an HNN extension, also taken from \cite[Section  3]{MarkovPartitions2}.

\begin{figure}[ht!]
	\[
	\includegraphics{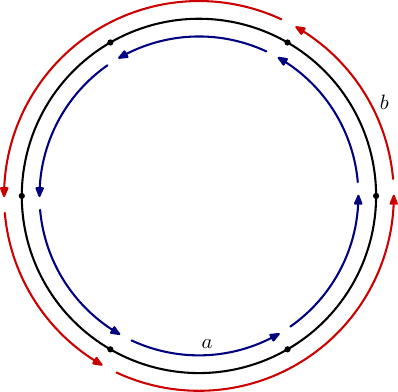}\hspace{2em}
	\includegraphics{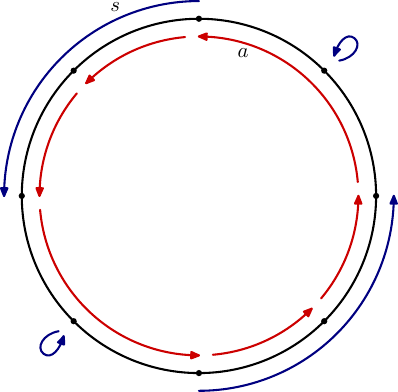}
	\]
	\caption{On the left: an action of $\SL(2,\Z)\cong\Z_4*_{\Z_2}\Z_6$. On the right: an action of the HNN extension $\Z_6*_{\Z_2}$.}\label{Example2}
\end{figure}

Both examples above are quite special because they are finite central extensions of free products. Note that there is no more general virtually free amalgamated products and HNN extensions acting on the circle with a ping-pong partition (see Remark \ref{rem:particular_cases}). A remarkable example appears in the PhD thesis of Jo\~ao Carnevale \cite[\S 7.1]{theseJoao} (see Figure \ref{fig.exotic}): the group has the presentation
\[
G=\langle r,f,g\mid r^2=\mathrm{id},frf^{-1}=r\rangle,
\]
which is the fundamental group of a graph of groups with a single vertex, with associated vertex group $\Z_2$ (generated by $r$), and two edges (corresponding to the generators $f$ and $g$).
It contains an index-two subgroup which is free of rank 3, generated by $f$, $g$, and $\overline{g}=rgr^{-1}$, so that $G\cong F_3\rtimes \Z_2$, where the action of $\Z_2$ is given by the transposition $(g,\overline g)$. The partition consists of 8 cyclically-ordered intervals $I_i=(x_i,x_{i+1})$, for $i\in \Z_8$, with the following relations:
\begin{align*}
f(I_1)&=I_1\cup \{x_2\}\cup I_2\cup \{x_3\}\cup I_3,\\
f(I_5)&=I_5\cup \{x_6\}\cup I_6\cup \{x_7\}\cup I_7,\\
f^{-1}(I_4)&=I_2\cup\{x_3\}\cup I_3\cup \{x_4\} \cup I_4,\\
f^{-1}(I_8)&=I_6\cup\{x_7\}\cup I_7\cup \{x_8\} \cup I_8,\\
g(I_7)&=I_7\cup \{x_8\}\cup I_8\cup \{x_1\}\cup I_1\cup \{x_2\}\cup I_2 \cup \{x_3\}\cup I_3\cup \{x_4\}\cup I_4\cup \{x_5\}\cup I_5,\\
g^{-1}(I_6)&=I_8\cup\{x_1\}\cup I_1\cup \{x_2\}\cup I_2\cup \{x_3\}\cup I_3\cup \{x_4\}\cup I_4 \cup \{x_5\}\cup I_5\cup \{x_6\}\cup I_6,\\
r(I_i)&=I_{i+4},\quad \text{for every }i\in \Z_8.
\end{align*}
\begin{figure}[ht!]
	\[\includegraphics[width=0.4\textwidth]{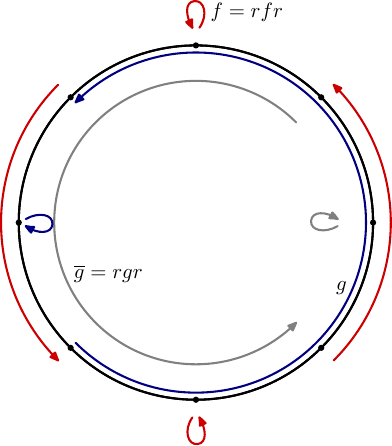}\]
	\caption{Carnevale's exotic example. The group contains a rank-3 free subgroup of index 2, and it has the property that any non-trivial element has at most 2 fixed point, although the action is not semi-conjugate to a Fuchsian action.}\label{fig.exotic}
\end{figure}
It has been proved in \cite{theseJoao} that the ping-pong partition for $G$ can be realized by a group of real-analytic circle diffeomorphisms, acting minimally, and with the property that any non-trivial element has at most two fixed points, although the action is conjugate neither to the action of a subgroup $\PSL(2,\R)$, nor to the action of a subgroup of its double cover $\SL(2,\R)$.

Illustrating examples of more complicated groups, beyond free products, faces the unavoidable difficulty given by the huge number of intervals that would be involved.

\subsection{Pointing out some difficulties}

We conclude this text with an example that explains that the properties defining a ping-pong partition are extremely delicate. Simply requiring that the circle is partitioned into finitely many intervals, with generators mapping them in an appropriate way, is not enough for us: the generators must come from a marking $(\alpha:G\to\Isom_+(X),T)$ of the group. The point is that we have to be sure that for every atom of the partition there is no ambiguity on which generators perform contractions inside it.

As an illustration, consider the group
\[G=F_2\rtimes \Z_4=\langle f,g\mid g^4=\mathrm{id},\, g^2fg^2=f^{-1}\rangle,\]
with generators acting as in Figure~\ref{fig:Basic_Example} (left): 
$g$ is the rotation $R_{1/4}$, and $f$ is a North-South map.
Although $G$ is virtually free, the previous presentation does not come from a marking. However, replacing the generator $f$ with $h=g^2f$, one obtains the presentation
\[G\cong \Z_2*\Z_4=\langle g,h\mid g^4=\mathrm{id},\,h^2=\mathrm{id}\rangle,\] which comes from a marking.
The partition in Figure~\ref{fig:Basic_Example} (left) is then replaced by that in Figure~\ref{fig:Basic_Example} (right), which is now a ping-pong partition in our sense.

\begin{figure}[ht!]
	\[
	\includegraphics{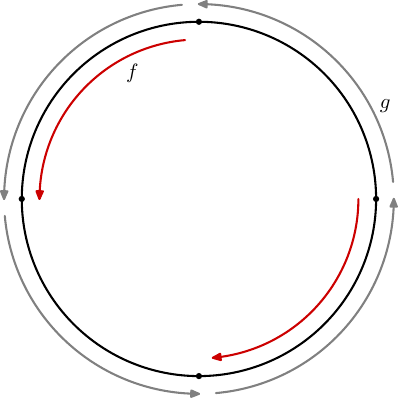}\hspace{2em}
	\includegraphics{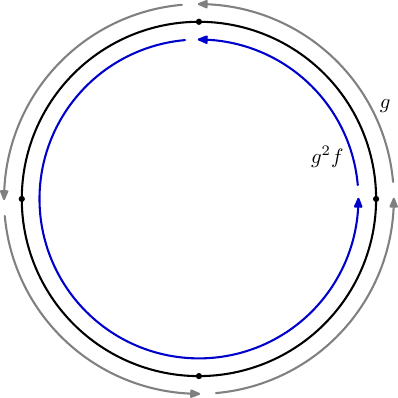}
	\]
	\caption{Two partitions for the same action of the group $G=F_2\rtimes\Z_4\cong \Z_2*\Z_4$. Only the right one is a ping-pong partition in our sense.}\label{fig:Basic_Example}
\end{figure}

\section*{Acknowledgments}

The results of this work and its companion \cite{MarkovPartitions2} were first announced in 2016. Since then, discussions with many colleagues have helped us  to find the good formalism. For this we wish to thank 
Pablo Barrientos,
Christian Bonatti,
H\'el\`ene Eynard-Bontemps,
Victor Kleptsyn,
Luis Paris,
Rafael Potrie
and Maxime Wolff.
We also warmly thank the institutions which have welcomed us during these years: the institutions of Rio de Janeiro PUC, IMPA, UFF, UFRJ, UERJ, the Universidad de la Rep\'ublica of Montevideo, the Institut de Math\'ematiques de Bourgogne of Dijon, and the Laboratoire Paul Painlev\'e of Lille.

\footnotesize{
S.A. was partially supported by IMPA, ANII via the Fondo Clemente Estable (projects FCE\_135352 and \newline FCE\_3\_2018\_1\_148740), by CSIC, via the Programa de Movilidad e Intercambios Acad\'emicos, by the MathAmSud project RGSD  (Rigidity and Geometric Structures in Dynamics) 19-MATH-04, by the LIA-IFUM and by the Distinguished Professor Fellowship of FSMP. He also acknowledges the project  ``Jeunes
Géomètres'' of F. Labourie (financed by the Louis D. Foundation).

C.M. was partially supported by CAPES postdoc program, Brazil (INCTMat \& PNPD 2015--2016), the
Ministerio de Economía y Competitividad and the Ministerio de Ciencia e Innovación, Grants MTM2014-
56950--P (2015--2017) and PID2020-114474GB-I00 (Spain UE) \& the Programa Cientista do Estado do Rio de Janeiro, FAPERJ, Brazil (2015--2018). He also received support from the  MathAmSud 2019-2020 CAPES-Brazil (“Rigidity and Geometric Structures on Dynamics”), the CNPq research grant 310915/2019-8 and Ministerio de Ciencia e Innovación PID2020-114474GB-I00 (Spain, UE) and CSIC Uruguay, via the Programa de Movilidad e Intercambios Acad\'emicos.

M.T. was partially supported by PEPS -- Jeunes Chercheur-e-s -- 2017 and 2019 (CNRS) and the R\'eseau France-Br\'esil en Math\'ematiques, MathAmSud RGSD  (Rigidity and Geometric Structures in Dynamics) 19-MATH-04. He also acknowledges INRIA Lille and the project ``Jeunes
Géomètres'' of F. Labourie (financed by the Louis D. Foundation) for the two semesters of delegation in the academic year 2018/19, during which this work has been concluded.
}

\bibliographystyle{plainurl}
\bibliography{biblio2}

\begin{flushleft}

{\scshape Juan Alonso}\\
CMAT, Facultad de Ciencias, Universidad de la Rep\'ublica\\
Igua 4225 esq. Mataojo. Montevideo, Uruguay.\\
email: \texttt{juan@cmat.edu.uy}

\smallskip	
	
{\scshape S\'ebastien Alvarez}\\
CMAT, Facultad de Ciencias, Universidad de la Rep\'ublica\\
Igua 4225 esq. Mataojo. Montevideo, Uruguay.\\
email: \texttt{salvarez@cmat.edu.uy}

\smallskip

{\scshape Dominique Malicet}\\
Laboratoire d'Analyse et de Math\'ematiques Appliquées (LAMA, UMR 8050)\\
Universit\'e Gustave Eiffel\\
5  bd.~Descartes, 77454  Champs  sur Marne, France\\
email: \texttt{dominique.malicet@crans.org}

\smallskip

{\scshape Carlos Meni\~no Cot\'on}\\
CITMAGA, 15782 Santiago de Compostela, Spain;\\
and Departamento de Matem\'atica Aplicada I\\
Universidade de Vigo, Escola de Enxe\~ner\'ia Industrial\\
R\'ua Conde de Torrecedeira 86, CP36208 Vigo, Spain\\
email:  \texttt{carlos.menino@uvigo.gal}

\smallskip

{\scshape Michele Triestino}\\
Institut de Math\'ematiques de Bourgogne (IMB, UMR 5584)\\
Universit\'e de Bourgogne Franche-Comt\'e\\
9 av.~Alain Savary, 21000 Dijon, France\\
email: \texttt{michele.triestino@u-bourgogne.fr}
\end{flushleft}

\end{document}